\documentclass[a4paper,11pt]{article}

\usepackage{xcolor}
\usepackage[pdfencoding=unicode, hidelinks]{hyperref}
\hypersetup{
	colorlinks,
	linkcolor={red},
	citecolor={blue},
}
\usepackage[english]{babel}
\usepackage[utf8]{inputenc}
\usepackage{amsthm}
\usepackage{amsmath}
\usepackage{amssymb}
\usepackage{mathrsfs}
\usepackage[mathscr]{eucal}
\usepackage{wasysym}
\usepackage{textcomp}
\usepackage{enumerate}
\usepackage{setspace}
\usepackage{latexsym}
\usepackage{graphicx}
\usepackage{mathtools}
\usepackage{tabularray}
\usepackage{geometry}
\usepackage{float}
\usepackage{emptypage}
\usepackage{commath}
\usepackage{multicol}
\usepackage{titlesec}
\usepackage[shortlabels]{enumitem}
\usepackage{verbatim}
\usepackage{cleveref}

\usepackage{csquotes}
\usepackage[backend=biber,
    style=alphabetic,
    citestyle=alphabetic,
    doi=false,
    url=false,
    isbn=false,
    maxnames=999,
    abbreviate=true,
    giveninits=true]{biblatex}
\usepackage{hyperref}
\addbibresource{bibliography.bib}

\AtEveryBibitem{%
  \clearfield{note}%
}

\theoremstyle{plain}
\newtheorem{thm}{Theorem}[section]
\theoremstyle{plain}

\theoremstyle{plain}
\newtheorem{lemma}[thm]{Lemma}
\theoremstyle{plain}
\newtheorem{defn}[thm]{Definition}
\theoremstyle{plain}
\newtheorem{prop}[thm]{Proposition}
\theoremstyle{definition}
\newtheorem{remark}[thm]{Remark}
\numberwithin{equation}{section}


\newcommand{\RR}{\mathbb{R}}

\newcommand{\NN}{\mathbb{N}}
\newcommand{\dd}{\,\ensuremath{\mathrm{d}}}
\newcommand{\dx}{\,\ensuremath{\mathrm{d}}x\,}
\newcommand{\ds}{\,\ensuremath{\mathrm{d}}s\,}
\newcommand{\dt}{\,\ensuremath{\mathrm{d}}t\,}
\newcommand{\dxt}{\,\ensuremath{\mathrm{d}}x \, \ensuremath{\mathrm{d}}t}
\newcommand{\dtau}{\,\ensuremath{\mathrm{d}}\tau\,}
\newcommand{\dS}{\,\ensuremath{\mathrm{d}}S\,}
\renewcommand{\norm}[1]{\lVert#1\rVert}
\newcommand{\duality}[2]{\langle#1,#2\rangle}

\newcommand{\sigmag}{\sigma_{\Gamma}}

\newcommand{\A}{\mathcal{A}}
\newcommand{\B}{\mathcal{B}}
\newcommand{\uu}{\boldsymbol{u}}
\newcommand{\uut}{\partial_t\boldsymbol{u}}
\newcommand{\epsilonu}{\varepsilon(\boldsymbol{u})}
\newcommand{\epsilonut}{\varepsilon(\partial_t\uu)}
\newcommand{\vv}{\boldsymbol{v}}

\newcommand{\oomega}{\boldsymbol{\omega}}
\newcommand{\epsilonv}{\varepsilon(\boldsymbol{v})}
\newcommand{\epsilonvt}{\varepsilon(\partial_t\vv)}

\newcommand{\epsilonul}{\varepsilon(\boldsymbol{u}_l)}
\newcommand{\epsilonult}{\varepsilon(\partial_t\boldsymbol{u}_l)}

\newcommand{\ff}{\boldsymbol{f}}
\newcommand{\nnu}{\boldsymbol{\nu}}

\newcommand{\Ws}{W}
\newcommand{\Wsz}{W_0}

\newcommand{\Vs}{V}
\newcommand{\Hs}{H}

\DeclareMathOperator{\diver}{div}
\newcommand{\into}{\int_{\Omega}}
\newcommand{\intTo}{\int_0^T \!\!\!\into}

\newcommand{\intto}{\int_0^t \!\!\!\into}
\newcommand{\intt}{\int_0^t}
\newcommand{\inttau}{\int_0^{\tau}}
\newcommand{\inttauo}{\int_0^{\tau}\!\!\!\into}
\newcommand{\ints}{\int_0^s}
\newcommand{\intg}{\int_{\Gamma}}

\newcommand{\intk}{\int_{t_{\tau}^{k-1}}^{t_{\tau}^k}}

\newcommand{\fk}{\ff_{\tau}^k}
\newcommand{\phik}{\varphi_{\tau}^k}
\newcommand{\Ak}{\mathcal{A}_{\tau}^k}
\newcommand{\Bk}{\mathcal{B}_{\tau}^k}
\newcommand{\uk}{\uu_{\tau}^k}
\newcommand{\epsilonuk}{\varepsilon(\uu_{\tau}^k)}
\newcommand{\vk}{\vv_{\tau}^k}

\newcommand{\zk}{z_{\tau}^k}

\newcommand{\vj}{\vv_{\tau}^j}

\newcommand{\ukm}{\uu_{\tau}^{k-1}}
\newcommand{\epsilonukm}{\varepsilon(\uu_{\tau}^{k-1})}

\newcommand{\phit}{\varphi_{\tau}}
\newcommand{\ut}{\uu_{\tau}}
\newcommand{\uti}{\widehat{\uu}_{\tau}}
\newcommand{\vt}{\vv_{\tau}}
\newcommand{\zt}{z_{\tau}}
\newcommand{\ft}{\ff_{\tau}}
\newcommand{\At}{\A_{\tau}}
\newcommand{\Bt}{\B_{\tau}}

\crefname{lemma}{lemma}{lemmas}
\Crefname{lemma}{Lemma}{Lemmas}
\crefname{prop}{proposition}{proposition}
\Crefname{prop}{Proposition}{Propositions}
\crefname{cor}{corollary}{corollaries}
\Crefname{cor}{Corollary}{Corollaries}
\crefname{remark}{remark}{remarks}
\Crefname{remark}{Remark}{Remarks}
\crefname{thm}{theorem}{theorems}
\Crefname{thm}{Theorem}{Theorems}
\Crefname{section}{Section}{Sections}

\begin{document}

\begin{center}
		
{\Large \bf On a Brain Tumor Growth Model \\with Lactate Metabolism, Viscoelastic Effects, \\ and Tissue Damage}

		\vskip0.5cm
		
		{\large\textsc{Giulia Cavalleri$^1$}} \\
		{\normalsize e-mail: \texttt{giulia.cavalleri01@universitadipavia.it}} \\
		\vskip0.35cm

		{\large\textsc{Pierluigi Colli$^2$}} \\
		{\normalsize e-mail: \texttt{pierluigi.colli@unipv.it}} \\
		\vskip0.35cm

        {\large\textsc{Alain Miranville$^3$}} \\
		{\normalsize e-mail: \texttt{alain.miranville@univ-lehavre.fr}} \\
		\vskip0.35cm
		
		{\large\textsc{Elisabetta Rocca$^2$}} \\
		{\normalsize e-mail: \texttt{elisabetta.rocca@unipv.it}} \\
		\vskip0.35cm
		
		{\footnotesize $^1$Department of Mathematics ``F. Casorati'', University of Pavia, 27100 Pavia, Italy}
		\vskip0.1cm
		
        {\footnotesize$^2$Department of Mathematics ``F. Casorati'', University of Pavia \& IMATI-C.N.R., 27100 Pavia, Italy}
        \vskip0.1cm

         {\footnotesize$^3$ School of Mathematics and Statistics, Henan Normal University, Xinxiang, P. R. China \& Université Le Havre Normandie, Laboratoire de Mathématiques Appliquées du Havre (LMAH), 25, rue Philippe Lebon, BP 1123, 76063 Le Havre cedex, France}
        \vskip0.5cm
		
	\end{center}

	\begin{abstract}\noindent
In this paper, we study a nonlinearly coupled initial-boundary value problem describing the evolution of brain tumor growth including lactate metabolism. In our modeling approach, we also take into account the viscoelastic properties of the tissues as well as the reversible damage effects that could occur, possibly caused by surgery. After introducing the PDE system, coupling a Fischer--Kolmogorov type equation for the tumor phase with a reaction-diffusion equation for the lactate, a quasi-static momentum balance with nonlinear elasticity and viscosity matrices, and a nonlinear differential inclusion for the damage,  we prove the existence of global in time weak solutions under reasonable assumptions on the involved functions and data. Strengthening these assumptions, we subsequently prove further regularity properties of the solutions as well as their continuous dependence with respect to the data, entailing the well-posedness of the Cauchy problem associated with the nonlinear PDE system. 
        
		\vskip3mm
		
		\noindent {\bf Key words:} nonlinear initial-boundary value problem, reaction-diffusion equation, Fischer--Kolmogorov type equation, well-posedness, regularity results,  tumor growth models.

		\vskip3mm
		
		\noindent {\bf AMS (MOS) Subject Classification:} 
        35K61, 
        35K57, 
        35D30, 
        35Q92, 
        35B65, 
        92C50. 
		
	\end{abstract}

\date{}

\section{Introduction}

In this paper, we study an initial-boundary value problem describing the dynamics of a brain tumor including lactate metabolism, viscoelastic effects of the tissues as well as their possible damage. 
Four nonlinearly coupled  PDEs describe the evolution of the concentrations of tumor denoted by $\varphi$, the intracellular lactate
$\sigma$, the ``small'' displacement $\uu$, and the damage parameter $z$.  The equation for $\sigma$ is based on the derivation done in \cite{Aubert_etal_2005} and in \cite{Guillevin_etal_2011}. Actually, in the first brain lactate kinetics models in the literature (cf.~\cite{Cherfils_Gatti_Guillevin_Miranville_Guillevin22}, \cite{Guillevin_etal_2018} and references therein) the authors dealt with the evolution of both capillary and intracellular lactate concentrations. However, since 
here we had in mind to include the displacement and damage evolution in the model,  we neglect the capillary lactate concentration in order to simplify the PDE system, which, in our case, turns out to be the following one:
\begin{subequations}\label{eq:problem}
    \begin{align}
        & \partial_t\varphi - \Delta \varphi = p(\sigma,z)\varphi\left(1-\frac{\varphi}{N}\right) - \varphi g(\sigma,z), \label{eq:phi}\\
        & \partial_t\sigma - \Delta \sigma + \frac{k_1(\varphi,z)\sigma}{k_2(\varphi,z) + \sigma} = J(\varphi,z), \label{eq:lactate}\\
        &  - \diver\left[\A(\varphi,z) \epsilonut + \B(\varphi,z)\epsilonu  \right] = \ff, \label{eq:displacement}\\
        & \partial_t z - \Delta z + \beta(z) + \pi(z)  \ni  w - \Psi(\varphi,\epsilonu), \label{eq:damage}
    \end{align}
\end{subequations}
posed in $Q \coloneqq \Omega \times (0,T)$, where $\Omega$ is a bounded $C^2$ domain in $\RR^n$ with $n=2,3$ and $T>0$ is a fixed time. Regarding the boundary conditions, we assume the system is isolated from the exterior, so we prescribe no-flux conditions for $\varphi$ and $z$. Regarding the lactate $\sigma$, we allow more general
Robin conditions, with the physical constants set to 1 for simplicity. Since our model is specifically designed for brain tumors, the domain is confined by a rigid boundary, the cranium, which prevents displacement at the boundary. Consequently, it is a natural choice to impose homogeneous Dirichlet boundary conditions for the displacement $\uu$. Therefore, we
couple the previous system with the following boundary conditions:
\begin{subequations}\label{eq:boundary_conditions}
    \begin{align}
        & \partial_{\nnu}\varphi = \partial_{\nnu}z = 0,\label{eq:phi,z_neumann}\\
        & \partial_{\nnu}\sigma =\sigma_{\Gamma} -\sigma,\label{eq:sigma_robin}\\
        & \uu = \mathbf{0}\,. \label{eq:u_dirichlet}
    \end{align}
\end{subequations}
Finally, we consider the following usual initial conditions: 
\begin{equation}\label{eq:initial_conditions}
    \varphi(0) = \varphi_0,\quad
    \sigma(0) = \sigma_0, \quad
    \uu(0) = \uu_0, \quad
    z(0) = z_0\,. 
\end{equation}
The coupling between equations \eqref{eq:phi} and \eqref{eq:lactate} was studied in \cite{Cherfils_Gatti_Guillevin_Miranville_Guillevin22}, while models including the effects of the stress (reducing the proliferation of the tumor) were already introduced and studied in \cite{Garcke_Lam_Signori_21}, where a Cahn--Hilliard type dynamics for the tumor concentration was used in place of the Fischer--Kolmogorov type equation \eqref{eq:phi}. 
Finally, let us mention the recent work \cite{cavalleri_2024}, by one of the authors of the present contribution, where a system coupling tumor growth dynamics of Cahn--Hilliard type together with displacement and damage was first analyzed and the existence of weak solutions was proved for the corresponding initial-boundary value problem. 
In our setting, the Fischer--Kolmogorov type equation \eqref{eq:phi} (cf.~\cite{fisher_1937}, \cite{Kolmogorov_1937}) describes the evolution of tumor cell concentration, denoted by $\varphi$, which takes values in the interval $[0,N]$. The constant $N$ represents the carrying capacity, which is defined as the maximum number of cells per unit volume. The balance between proliferation and
apoptosis is taken into account through the rate $p$, while we account for necrosis through the rate $g$, where both $p$ and $g$ possibly depend on the lactate proportion $\sigma$ and the damage parameter $z$.  

The reaction-diffusion equation \eqref{eq:lactate} represents the evolution of intracellular lactate production $\sigma$. Since lactate diffusion from cells to capillaries is coupled with the transport of hydrogen ions $H+$, in symport
terms we have positive functions $k_1$ and $k_2$, representing the ratios between a Michaelis–Menten
constant and the concentration of $H+$  in the intracellular space and in the capillary
compartment, respectively. 
The positive function $J$ collects the production of lactate in cells by glycolysis, the consumption of lactate by metabolism, and the diffusion of lactate in neighboring regions. 
Moreover, we assume that $k_1$, $k_2$, and $J$ are possibly dependent on the other variables $\varphi$ and $z$. 

The main novelty of this contribution relies on the fact that here we include the mechanics in the model by assuming a viscoelastic behavior of biological tissues. 
It is indeed well known that solid stress can aﬀect tumor growth (see e.g.~\cite{Urcun_Lorenzo_etal_22}) and, at the same time, tumor growth increases mechanical stress. Due to the complexity of the problem, as a first approach here we
assume infinitesimal displacements, so we work in the case of linear elasticity (cf.~also \cite{Garcke_Lam_Signori_21} and \cite{Garcke_Lam_Signori_Opt_Contr_2021} for similar derivations). 
The evolution of the small displacement $\uu$ is then ruled by the vectorial quasi-stationary balance law \eqref{eq:displacement}, where the two tensors $\mathcal{A}$ and $\mathcal{B}$ describe the elastic and viscous effects. They may depend on the tumor and damage variables, as well as on $\sigma$ in a non-degenerating way (cf.~assumptions \eqref{hyp:AB} later on). For a more detailed discussion of this dependency, we refer to the following \Cref{remark:sigma_dep}. Finally, $\ff$ represents a given volume force. 
Let us notice that the study of the mechanical effects in tumor growth models results particularly important in the case of brain (glioma) tumors, where the evolution strongly depends on the tissues (see e.g., \cite{Alfonso_etal_2017} and references therein). 

Finally, following the ideas developed in \cite{cavalleri_2024},  we consider, through equation \eqref{eq:damage}, the possible damage effects that could occur for example in case of surgery that causes lesions which, in turn, affect
the proliferation of tumor cells.  Following the derivation in Fr\'emond and Nedjar (cf.~\cite{Fremond_Nedjar_95}, \cite{Fremond_Nedjar_96}, \cite{Fremond_2002}), the evolution of the damage parameter $z\in [0,1]$ ($z=0$ means the tissue is completely damaged and $z=1$ means completely safe) is ruled by the evolution inclusion \eqref{eq:damage}, where the maximal monotone graph $\beta$ having bounded domain (in $[0,1]$) forces the variable $z$ to assume the physically meaningful values in between $0$ and $1$. A simple choice for $\beta$ could be the subdifferential of the indicator function of the set $[0,1]$, which takes value $0$ in $[0,1]$ and is $+\infty$ elsewhere. The function $\pi$ denotes a regular, possibly non-monotone function, $w$ represents an energy threshold for initiation of damage, while $\Psi$ describes the coupling between the damage and the displacement along with the tumor concentration. Usually, in damage models (cf., e.g., \cite{Heinemann_Kraus_2011}, \cite{Rocca_Rossi_2014}, \cite{Heinemann_Rocca_2015}, \cite{Mielke_Roubicek_Zeman_2010}, \cite{Thomas_Mielke_2010}) the dependence of $\Psi$ on $\epsilonu$ is quadratic because it comes from the derivative of the elastic part of the energy with respect to the damage variable $z$. However, here we cannot handle a quadratic dependence and so we assume a Lipschitz-continuity dependence of $\Psi$ with respect to both $\varphi$ and $\epsilonu$ (cf.~assumption \ref{hyp:psi} later on), cf.~also \cite{Kuttler_Shillor_2006}.

The main mathematical difficulties arise here from here on the nonlinear coupling between equations and, in particular, on the dependence of the elasticity and viscosity matrices in \eqref{eq:displacement} by $\varphi$ and $z$. 

The technique used in the proof relies on a suitable maximum principle coupled with Moser estimates to prove the boundedness of $\varphi$ and $\sigma$, as well as a fixed-point argument, suitable a priori estimates and compactness results entailing on the sufficient weak, weak-star and strong convergences in the approximating sequences in order to pass to the limit and identify the nonlinearities.

Hence, the plan of the paper is the following: in the next Section~\ref{sec:main} we set the assumptions on the data and we state our main results concerning the global (in time) existence of weak solutions, the regularity results under more restrictive assumptions on the data, as well as the continuous dependence of solutions with respect to the data. In Subsection~\ref{sec:th1} we prove the first result through a Schauder fixed-point argument including a maximum principle and a Moser argument in order to prove the constraints on the variables $\varphi$ and $\sigma$ as well as a time discretization of the displacement equation \eqref{eq:displacement} and a regularization of the graph $\beta$ by means of its Yosida approximation $\beta_\lambda$. The proof is then concluded by passing to the limit in both the time step $\tau$ and the regularization parameter $\lambda$. 
In Section~\ref{sec:th2} we prove a higher regularity result which is then needed in order to get uniqueness and continuous dependence of solutions with respect to the data (cf.~Section~\ref{sec:th3}). 

Many relevant issues arise from this problem and will be the subject of further investigations, like the optimal control problem that is of particular importance when administrating drugs or antiangiogenic therapies to the patient. In this direction, we refer to \cite{Cherfils_Gatti_Guillevin_Miranville_Guillevin22} and \cite{Guillevin_etal_2018}, where damage and mechanical effects are not taken into account. For a different perspective, we also refer to \cite{Garcke_Lam_Signori_Opt_Contr_2021}, where the proposed system, unlike ours, is not specifically tailored for brain tumors and again, it does not include tissue damage.

\section{Weak Formulation and Statement of Main Results}
\label{sec:main}
\subsection{Notation and preliminaries}
\textbf{Notation.} In what follows, for any real Banach space $X$ with dual space $X'$, we indicate its norm as $\norm{\cdot}_{X}$ and the dual pairing between $X'$ and $X$ as $\duality{\cdot}{\cdot}_{X}$. We denote the Lebesgue and Sobolev spaces over $\Omega$ as $L^p \coloneqq L^p(\Omega)$, $W^{k,p} \coloneqq W^{k,p}(\Omega)$ and $H^k \coloneqq W^{k,2}(\Omega)$, while for the Lebesgue spaces over $\Gamma$ we use $L^p_{\Gamma} \coloneqq L^p(\Gamma)$. 
For convenience, we set $\Hs \coloneqq L^2(\Omega)$ and we identify $\Hs$ with its dual space $\Hs'$. We introduce $\Vs \coloneqq H^1(\Omega)$ and $\Vs_0\coloneqq H^1_0(\Omega)$, where $H^1_0(\Omega)$ represents the set of $H^1(\Omega)$ functions with zero trace at the boundary. Additionally, we define $\Ws$, the space of $H^2(\Omega)$ functions with zero normal derivative at the boundary, and $\Wsz \coloneq H^2(\Omega) \cap \Vs_0$, representing $H^2(\Omega)$ functions with zero trace at the boundary. In both cases, the natural norm 
induced by $H^2$ is denoted by $\norm{\cdot}_{\Ws}$.
To simplify the notation, we do not always distinguish between scalar, vector, and matrix-valued spaces.  For instance, we use $L^p$ to indicate both $L^p(\Omega)$ and $L^p(\Omega;\RR^n)$, depending on the context. However, we adopt bold font to denote vectors and calligraphic font to denote tensors. 
For the sake of brevity, the norm of the Bochner space $W^{k,p}(0,T;X)$ is indicated as $\norm{\cdot}_{W^{k,p}(X)}$, omitting the time interval $(0,T)$. Sometimes, for $p \in [1,+\infty)$, we could identify $L^p(Q)$ with $L^p(0,T;L^p)$. With the notation $C^0([0,T];X)$ we mean the space of continuous $X$-valued functions. 
Finally, as is customary, we use $C$ to represent a generic constant depending only on the problem's data and whose value might change from line to line. If we want to highlight a dependency on a certain parameter, we put it as a subscript (e.g., $C_{\tau}$ indicates a constant that depends on $\tau$, $C_0$ a constant that depends on the initial data, etc.). \\

\noindent \textbf{Useful inequalities.} We will make use of classical inequalities, such as H\"older, Young, Gronwall, Poincaré, Poincaré--Wirtinger, Ehrling (see \cite[][Theorem 16.4, p. 102]{Lions_Magenes_12}), and Gagliardo--Nirenberg (see e.g. \cite{Nirenberg_59}).
In particular, we will employ the following special cases of  Gagliardo--Nirenberg inequality:

\begin{lemma}\label{lemma:special_case_gagliardo_nirenberg}
    Let $\Omega \subseteq \RR^n$ be a bounded Lipschitz domain. Then, it exists a constant $C$ such as, for every $v \in \Vs$, it holds
    \begin{align}
         \norm{v}_{L^4} \leq C \norm{v}^{\frac{1}{2}}_{\Hs}\norm{v}^{\frac{1}{2}}_{\Vs} &\qquad \text{ if } n=2,\\
        \norm{v}_{L^3} \leq  C \norm{v}^{\frac{1}{2}}_{\Hs}\norm{v}^{\frac{1}{2}}_{\Vs} &\qquad\text{ if } n=3.
    \end{align}
\end{lemma}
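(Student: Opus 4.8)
The plan is to derive both inequalities as particular cases of the general Gagliardo--Nirenberg interpolation inequality (in the form valid on bounded Lipschitz domains, cf.~\cite{Nirenberg_59}), and then to absorb the resulting lower-order additive term. Recall that the general inequality states that, for a bounded Lipschitz domain $\Omega \subseteq \RR^n$, exponents $1 \leq q, r \leq \infty$, integers $0 \leq j < m$, and $\theta \in [j/m, 1]$ satisfying the compatibility relation
\begin{equation*}
\frac{1}{p} = \frac{j}{n} + \theta\left(\frac{1}{r} - \frac{m}{n}\right) + (1-\theta)\frac{1}{q},
\end{equation*}
there exists $C>0$ such that $\norm{D^j v}_{L^p} \leq C\norm{v}_{W^{m,r}}^{\theta}\norm{v}_{L^q}^{1-\theta} + C\norm{v}_{L^q}$ for every $v \in W^{m,r}(\Omega)$ (the extra summand being needed on a bounded domain, equivalently one first extends $v$ to $\RR^n$ by a bounded extension operator, which exists since $\Omega$ is Lipschitz).

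\textbf{Main steps.} First I would specialize to $j = 0$, $m = 1$, $r = q = 2$, so that $W^{m,r} = \Vs$ and $L^q = \Hs$. For $n = 2$ and target exponent $p = 4$, the compatibility relation reads $\tfrac{1}{4} = (1-\theta)\tfrac{1}{2}$, hence $\theta = \tfrac{1}{2}$; for $n = 3$ and $p = 3$, it reads $\tfrac{1}{3} = \theta(\tfrac{1}{2}-\tfrac{1}{3}) + (1-\theta)\tfrac{1}{2}$, which again forces $\theta = \tfrac{1}{2}$. In either case $\theta = 1/2$ lies in $[0,1]$ and the non-exceptional case applies, so the general inequality gives
\begin{equation*}
\norm{v}_{L^p} \leq C\norm{v}_{\Vs}^{1/2}\norm{v}_{\Hs}^{1/2} + C\norm{v}_{\Hs},
\end{equation*}
with $p = 4$ if $n = 2$ and $p = 3$ if $n = 3$.

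\textbf{Concluding step.} It then remains to remove the additive term $C\norm{v}_{\Hs}$. Since $\Vs \hookrightarrow \Hs$ with $\norm{v}_{\Hs} \leq \norm{v}_{\Vs}$, one has $\norm{v}_{\Hs} = \norm{v}_{\Hs}^{1/2}\norm{v}_{\Hs}^{1/2} \leq \norm{v}_{\Vs}^{1/2}\norm{v}_{\Hs}^{1/2}$, so the additive term is dominated by the interpolation term and can be absorbed into the constant, yielding the two claimed estimates. There is no genuine obstacle here; the only point requiring care is the bookkeeping of the compatibility relation, which is precisely what pins down the common exponent $1/2$ on both factors.

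\textbf{Alternative.} For $n = 2$ one could equally invoke the Ladyzhenskaya inequality $\norm{v}_{L^4(\Omega)}^2 \leq C\norm{v}_{\Hs}\norm{v}_{\Vs}$ directly, and for $n = 3$ interpolate $L^3$ between $L^2$ and $L^6$ together with the Sobolev embedding $\Vs \hookrightarrow L^6$; this leads to the same bounds but is essentially the same computation unpacked.
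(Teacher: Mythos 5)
Your proof is correct and follows exactly the route the paper intends: the paper states this lemma as a special case of the Gagliardo--Nirenberg inequality and cites \cite{Nirenberg_59} without further detail, and your verification of the compatibility relation (yielding $\theta=1/2$ in both dimensions) together with the absorption of the additive $\norm{v}_{\Hs}$ term via $\norm{v}_{\Hs}\leq\norm{v}_{\Vs}$ is the standard and complete way to fill in that derivation.
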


\noindent \textbf{Mathematical visco-elasticity.} Let $\A=(a_{hijk})$ be a fourth-order tensor over $\RR^2$. We say that:
    \begin{enumerate}[(i)]
        \item $\A$ is \textit{symmetric} if
        \begin{equation}\label{cond_sym}
            a_{hijk}(\varphi,z) = a_{ihjk}(\varphi,z) = a_{jkhi}(\varphi,z)
        \end{equation}
        for a.e. $(\varphi,z) \in \RR^2$ and for every indices $i,j,k,h = 1, \dots, d$.
        \item $\A$ is \textit{strictly positive definite} (or \textit{uniformly elliptic}) if there exists a positive constant $C$ such that for all $\epsilon \in \RR^{n\times n}_{\text{sym}}$ and for a.e. $(\varphi,z)\in \RR^2$
        \begin{align} \label{cond_ellipt}
        \A(\varphi,z) \epsilon : \epsilon \geq C |\epsilon|^2,
        \end{align} 
        where $:$ denotes the standard Frobenius inner product between matrices.
    \end{enumerate}

\noindent The following regularity result can be applied.

\begin{lemma} \label{lemma:H2_inequality}
    Let $\Omega$ be a $C^2$ domain in $\RR^n$ and  $\A=(a_{ijkh}) \in W^{1,\infty}(\Omega;\RR^{n \times n \times n \times n})$ be a symmetric and strictly positive definite fourth-order tensor.
   Then, there exist $C_*, C^* > 0$ such that for every $\uu$ in $\Wsz$ 
    \begin{equation*}
        C_* \norm{\uu}_{\Ws} \leq \norm{\diver[\A\, \epsilonu]}_{\Hs} \leq C^* \norm{\uu}_{\Ws}.
    \end{equation*}
\end{lemma}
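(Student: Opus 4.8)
The statement is a two–sided elliptic regularity estimate for the divergence‐form operator $\uu \mapsto \diver[\A\,\epsilonu]$ on $\Wsz = H^2(\Omega)\cap H^1_0(\Omega)$. The upper bound is the easy half: since $\A \in W^{1,\infty}$, the product rule gives $\diver[\A\,\epsilonu] = (\diver\A):\epsilonu + \A:\nabla\epsilonu$ (understood componentwise), and each term is controlled in $\Hs = L^2(\Omega)$ by $\norm{\A}_{W^{1,\infty}}\norm{\uu}_{H^2}$; this produces $C^*$ depending only on $\norm{\A}_{W^{1,\infty}}$ and $n$. The lower bound $C_*\norm{\uu}_{\Ws} \le \norm{\diver[\A\,\epsilonu]}_{\Hs}$ is the substantive content and is precisely an a priori $H^2$ estimate for the linear elliptic system $-\diver[\A\,\epsilonu] = \ff$ with homogeneous Dirichlet data.

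First I would set $\ff := -\diver[\A\,\epsilonu] \in \Hs$ and regard $\uu$ as the (unique) weak solution in $\Vs_0$ of this system, uniqueness and the energy estimate $\norm{\uu}_{\Vs} \le C\norm{\ff}_{\Hs}$ following from the ellipticity \eqref{cond_ellipt} together with Korn's inequality and Lax–Milgram. Then I would invoke the classical elliptic regularity theory for second–order linear systems in divergence form with $W^{1,\infty}$ (in particular, Lipschitz) coefficients on a $C^2$ domain: under the Legendre–Hadamard / strong ellipticity condition \eqref{cond_ellipt} and the symmetry \eqref{cond_sym}, the solution satisfies $\uu \in H^2(\Omega)$ with $\norm{\uu}_{H^2} \le C\big(\norm{\ff}_{\Hs} + \norm{\uu}_{\Hs}\big)$; this is the standard difference–quotient argument (interior estimates plus flattening of the boundary via $C^2$ charts to reduce to a half–space, where tangential difference quotients are bounded and the normal second derivative is recovered algebraically from the equation using ellipticity). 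The needed reference is any standard source (e.g. Agmon–Douglis–Nirenberg, or Giaquinta–Martinazzi for systems). Finally, since here $\uu \in \Vs_0$ with $\norm{\uu}_{\Hs} \le \norm{\uu}_{\Vs} \le C\norm{\ff}_{\Hs}$, the lower–order term is absorbed and one gets $\norm{\uu}_{\Ws} \le C\norm{\ff}_{\Hs} = C\norm{\diver[\A\,\epsilonu]}_{\Hs}$, i.e. the claimed $C_*$ (with $C_* = 1/C$).

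The one point requiring care — and the main obstacle — is that the estimate is stated with a symmetrized gradient $\epsilonu$ rather than the full gradient $\nabla\uu$, so Korn's inequality must be used twice: once to get the energy estimate $\norm{\uu}_{\Vs}\le C\norm{\ff}_{\Hs}$ (second Korn inequality on $\Vs_0$, where it holds without the rigid–motion caveat because of the Dirichlet condition), and implicitly in the regularity step, where the bilinear form $\int_\Omega \A\,\epsilonu:\epsilonv$ must still satisfy a Gårding-type inequality on $H^1$ so that the difference–quotient machinery applies. Both are classical for the symmetric, strongly elliptic tensors considered here. A cleaner alternative, if one wants to avoid re‑deriving Korn in the regularity argument, is to note that for symmetric $\A$ the operator $\uu\mapsto\diver[\A\,\epsilonu]$ can be rewritten as $\diver[\widetilde\A\,\nabla\uu]$ for a fourth–order tensor $\widetilde\A$ that inherits strong ellipticity from $\A$ on symmetric matrices plus the symmetry of second derivatives; then the standard divergence–form $H^2$ theory for $W^{1,\infty}$ coefficients applies verbatim. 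I would mention this reduction and cite the relevant elliptic regularity theorem, keeping the write‑up to a short paragraph since no new idea beyond textbook elliptic theory is involved.
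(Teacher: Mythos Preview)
Your outline is correct. The paper does not actually prove this lemma: immediately after the statement it simply writes ``For more details, cf.\ \cite[Proposition 1.5, p.~318]{marsden1994} and \cite[Lemma 3.2, p.~263]{necas2012}'', i.e.\ it defers entirely to standard elliptic regularity references for the linear elasticity system. What you sketch---the trivial upper bound from the product rule and $\A\in W^{1,\infty}$, and the lower bound via Lax--Milgram plus Korn on $\Vs_0$ for the energy estimate followed by the classical $H^2$ regularity for divergence-form elliptic systems with Lipschitz coefficients on a $C^2$ domain---is exactly the content of those references, so your approach coincides with the paper's (implicit) one.

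One small remark on your ``cleaner alternative'': when you rewrite $\diver[\A\,\epsilonu]$ as $\diver[\widetilde\A\,\nabla\uu]$, the tensor $\widetilde\A$ will in general only satisfy the Legendre--Hadamard (rank-one) condition, not the full Legendre condition. This is still sufficient for $H^2$ regularity under Dirichlet boundary conditions, but it is worth saying explicitly which ellipticity notion you are invoking, since the paper's hypothesis \eqref{cond_ellipt} is positivity on all symmetric matrices, and the implication to Legendre--Hadamard (via $|\mathrm{sym}(a\otimes b)|^2\ge\tfrac12|a|^2|b|^2$) is the step that makes the standard theory applicable.
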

\noindent For more details, cf. \cite[][Proposition 1.5, p. 318]{marsden1994} and \cite[][Lemma 3.2., p. 263]{necas2012}.\\

\noindent As will be specified later, throughout this work, the viscous tensor $\A$ will enjoy the properties \eqref{cond_sym}, \eqref{cond_ellipt}. Regarding the elastic tensor $\B$, it will be symmetric and positive definite, in the following sense.
\begin{enumerate}[(i),resume]
    \item $\B$ is \textit{positive definite}, if,  for all $\epsilon \in \RR^{n \times n}_{\text{sym}}$ and for a.e. $(\varphi,z)\in \RR^2$,
    \begin{align} \label{cond_weak_ellipt}
    \B (\varphi,z) \epsilon : \epsilon \geq 0.
    \end{align} 
\end{enumerate}

\subsection{Hypotheses}
In the following, we enlist the hypotheses we will work with throughout the paper. 

\begin{enumerate}[(\rm{A\arabic*})]
\item \label{hyp:eq_phi}
We suppose that
\begin{align}
    &p, \, g : \RR^2 \to \RR \text{ are continuous functions,}\\
    &0 \leq p \leq p^*, \quad 0 \leq g \leq g^*,
\end{align}
where $p^*$, $g^*$ are positive constants, and that
\begin{equation}
   N \text{ is a positive constant.}
\end{equation}

\item \label{hyp:eq_lactate}
We assume that
\begin{align}
    &k_1, k_2, J : \RR^2 \to \RR \text{ are  continuous and}\\
    &0 \leq k_1 \leq k_1^*, \quad 0 < {k_2}_* \leq k_2 \leq k_2^*, \quad  0 \leq J \leq J^*, 
\end{align}
where $k_1^*$, ${k_2}_*$, $k_2^*$, $J^*$ denote fixed positive constants.

\item \label{hyp:eq_displacement}
We require that $\A = (a_{ijkh}), \B = (b_{ijkh}) : \RR^2 \to \RR^{n \times n \times n \times n}$ are fourth-order tensors such that 
\begin{align}\label{hyp:AB}
    & \A, \, \B \text{ are of class } C^1 \text{ and bounded along with their partial derivatives,}\\
    &\A \text{ is strictly positive definite,}\\
    & \B \text{ is positive definite}.
\end{align}
Moreover, we assume
\begin{equation*}
\ff \in L^{\infty}(H).
\end{equation*}

\item \label{hyp:beta}
We suppose that there exists a $\widehat{\beta}: \RR \to [0,+\infty]$ such that 
\begin{equation}
    \mathcal{D}(\widehat{\beta}) \subseteq [0,1], \quad \widehat{\beta} \text{ is proper, l.s.c. and convex} 
\end{equation}
and we denote its subdifferential by $\beta \coloneqq \partial \widehat{\beta} : \RR \rightrightarrows \RR$.

\item \label{hyp:pi}
We consider a function $\widehat{\pi} \in C^1(\RR)$ and we denote by $\pi \coloneqq \widehat{\pi}'$ its derivative, requiring that
\begin{align}
    &\widehat{\pi} \text{  is concave,}\\
    &\pi \text{ is Lipschitz continuous.}
\end{align}

\item \label{hyp:w}
We suppose that 
\begin{equation}
    w \in L^{\infty}(0,T;\Hs).
\end{equation}

\item \label{hyp:psi}
We assume that $\Psi : \Omega \times \RR \times\RR^{n \times n} \to \RR$ is 
\begin{align}
    &\text{a Carathéodory function.}
\end{align}
Moreover, we require that 
\begin{equation}\label{hyp:psi_lipschitz}
    \begin{split}
        &\Psi(x, \cdot, \cdot) :  \RR \times \RR^{n \times n} \to \RR \text{ is Lipschitz continuous }, \text{i.e.,}\\
    & \exists C_{\Psi}>0 \text{ s.t. } |\Psi(x,\varphi_1,\epsilon_1)-\Psi(x,\varphi_2,\epsilon_2)| \leq C_{\Psi} \left(|\varphi_1-\varphi_2| + |\epsilon_1 - \epsilon_2|\right) 
    \end{split}
\end{equation}
for a.e. $x \in \Omega$, for all $\epsilon_1, \epsilon_2 \in \RR^{n \times n}$, $\varphi_1, \varphi_2 \in \RR$ and that 
\begin{equation}\label{hyp_psi_0}
    \hat{\Psi}(x) \coloneqq \Psi(x,0,\mathbf{0}) \in \Hs.
\end{equation}
\end{enumerate}

\begin{remark}
    Notice that from Hypotheses \eqref{hyp:psi_lipschitz} and \eqref{hyp_psi_0} we  trivially deduce that
    \begin{equation}\label{eq:ineq_psi}
          |\Psi(x,\varphi,\epsilon)| \leq  |\Psi(x,\varphi,\epsilon)-\Psi(x,0,\mathbf{0})| + |\Psi(x,0,\mathbf{0})|\leq C_{\Psi} \left(|\varphi| + |\epsilon|\right)  + |\hat{\Psi}(x)|
    \end{equation}
    for a.e. $x \in \Omega$ and for all $\varphi \in \RR, \epsilon \in \RR^{n \times n}$.
\end{remark}

\noindent In the following, for the sake of brevity, we will omit the dependence of $\Psi$ on the point $x$ in the notation, using $\Psi(\varphi,\epsilon)$ instead of $\Psi(x,\varphi,\epsilon)$.

\begin{enumerate}[(\rm{A\arabic*}),resume]
\item \label{hyp:boundary_conditions}
Regarding the boundary conditions, we suppose that
\begin{align}
    \sigma_{\Gamma} \in L^{2}(L^2_{\Gamma}),\quad 0 \leq \sigma_{\Gamma} \leq M_0,
\end{align}
where $M_0$ is a fixed positive constant.

\item \label{hyp:initial_conditions}
Regarding the initial conditions, we require that
\begin{align}
    &\varphi_0 \in \Vs, \quad 0 \leq \varphi_0 \leq N,\\
    &\sigma_0 \in \Hs, \quad 0 \leq \sigma_0 \leq M_0, \label{eq:hp_sigma0}\\
    &\uu_0 \in \Vs_0,\\
    &z_0 \in \Vs, \quad \widehat{\beta}(z_0) \in L^1.
\end{align}
\end{enumerate}

\noindent To prove additional regularity and continuous dependence, we will need the following stronger hypotheses. We assume that:

\begin{enumerate}[(\rm{B\arabic*})]
\item\label{hyp:uniqueness_eq_phi}
the functions $p, g$ are of class $C^1$ and are bounded along with their partial derivatives,

\item \label{hyp:uniqueness_eq_lactate}
 $k_1, k_2, J$ are Lipschitz continuous,

\item \label{hyp:uniqueness_w}
$w \in H^1(0,T;\Hs)$,

\item\label{hyp:uniqueness_initial_conditions}
$\varphi_0 \in \Ws$, $\uu_0 \in \Wsz$, $z_0 \in \Ws$ and $\beta^0(z_0) \in \Hs$, where $\beta^0$ is the minimal section of $\beta$.
\end{enumerate}

\section{Main results}
\begin{defn}\label{defn:weak_solution}
    We say that the quadruplet $(\varphi,\, \sigma,\, \uu,\, z)$ is a weak solution to the PDE system \eqref{eq:problem} endowed with the boundary and initial conditions \eqref{eq:boundary_conditions}--\eqref{eq:initial_conditions} if  
    \begin{gather*}
            \varphi \in   H^1(0,T;\Hs) \cap L^{\infty}(0,T; \Vs) \cap L^{2}(0,T;\Ws), \quad 0 \leq \varphi \leq N,\\
            \sigma \in H^1(0,T;\Vs') \cap L^{\infty}(0,T;\Hs) \cap L^2(0,T;\Vs), \quad 0 \leq \sigma \leq M,\\
            \uu \in W^{1,\infty}(0,T;\Vs_0)\\
            z \in   H^1(0,T;\Hs) \cap L^{\infty}(0,T; \Vs) \cap L^{2}(0,T;\Ws),
    \end{gather*}
   where $M=M(M_0, J^*)$, with 
   \begin{gather*}
       \varphi(0)=\varphi_0, \quad \sigma(0)=\sigma_0, \quad  \uu(0)=\uu_0,  \quad z(0)=z_0
   \end{gather*}
   a.e. in $\Omega$ and there exists a subgradient
   \begin{equation*}
       \xi \in L^2(0,T;\Hs), \quad \xi \in \beta(z) \text{ a.e. in } Q
   \end{equation*}
   such that
    \begin{subequations}\label{eq:problem_eq_with_spaces}
        \begin{align}
        & \into \partial_t\varphi \zeta +  \nabla \varphi \cdot \nabla \zeta \dx = \into \bigg[p(\sigma,z)\varphi\left(1-\frac{\varphi}{N}\right) - \varphi g(\sigma,z)\bigg] \zeta \dx, \label{eq:phi_with_spaces}\\
        &  \duality{\partial_t\sigma}{\zeta }_{\Vs} + \into \nabla \sigma \cdot \nabla \zeta + \frac{k_1(\varphi,z)\sigma \zeta }{k_2(\varphi,z) + \sigma}\dx + \intg (\sigma - \sigmag) \zeta \dS = \into J(\varphi,z) \zeta \dx,\label{eq:sigma_with_spaces}\\
        &  \into  \left[\A(\varphi,z) \epsilonut + \B(\varphi,z)\epsilonu  \right] : \varepsilon(\vv) \dx = \into \ff \cdot \vv \dx,\label{eq:u_with_spaces}\\
        & \into \partial_t z \zeta + \nabla z \cdot \nabla \zeta  + \xi \zeta  + \pi(z) \zeta \dx = \into \left[w -  \Psi(\varphi,\epsilonu)\right] \zeta \dx,\label{eq:z_with_spaces}
    \end{align}
\end{subequations}
a.e. in $(0,T)$, for every $\zeta \in \Vs$ and $\vv \in \Vs_0$.
\end{defn}

\begin{remark}
    Notice that, with the regularity we require, a solution of \eqref{eq:problem}--\eqref{eq:initial_conditions}  in the sense of \Cref{defn:weak_solution} satisfies equation \eqref{eq:phi} and inclusion \eqref{eq:damage} a.e. in $Q$.
\end{remark}
\begin{thm}[Existence] \label{thm:existence}
    Under the set of Hypotheses \textrm{(A)}, the PDE system \eqref{eq:problem} endowed with the boundary and initial conditions \eqref{eq:boundary_conditions}--\eqref{eq:initial_conditions} admits at least one weak solution in the sense of \Cref{defn:weak_solution}. 
\end{thm}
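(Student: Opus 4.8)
\noindent\emph{Proof strategy.}
The plan is to combine a Schauder fixed point with two regularizations: the Moreau--Yosida approximation $\widehat{\beta}_\lambda$ of $\widehat{\beta}$ (with derivative $\betal$, the Yosida regularization of $\beta$), $\lambda>0$, and an implicit Euler discretization of \eqref{eq:displacement} with uniform step $\tau=T/n$. For fixed $\tau,\lambda$, I freeze a triple $(\bar\varphi,\bar\sigma,\bar z)$ in a closed, convex, bounded set $K\subset L^2(Q)^3$ (incorporating the constraints $0\le\bar\varphi\le N$, $0\le\bar\sigma\le M$ and the uniform bounds found below) and solve in cascade, after truncating the superlinear nonlinearities outside their invariant ranges: equation \eqref{eq:lactate} with \eqref{eq:sigma_robin} for $\sigma$, with coefficients $k_1(\bar\varphi,\bar z),k_2(\bar\varphi,\bar z),J(\bar\varphi,\bar z)$; equation \eqref{eq:phi} with \eqref{eq:phi,z_neumann} for $\varphi$, with rates $p(\bar\sigma,\bar z),g(\bar\sigma,\bar z)$; the linear elliptic problems
\begin{equation*}
\into\Bigl[\A(\bar\varphi_\tau^k,\bar z_\tau^k)\,\tfrac{\epsilonuk-\epsilonukm}{\tau}+\B(\bar\varphi_\tau^k,\bar z_\tau^k)\,\epsilonuk\Bigr]:\varepsilon(\vv)\dx=\into\fk\cdot\vv\dx\,,\qquad\vv\in\Vs_0,\ k=1,\dots,n,
\end{equation*}
with $\bar\varphi_\tau^k,\bar z_\tau^k,\fk$ the time-averages of $\bar\varphi,\bar z,\ff$ on $(t_\tau^{k-1},t_\tau^k)$, each uniquely solvable in $\Vs_0$ by Lax--Milgram since $\vv\mapsto\into[\tau^{-1}\A(\cdot)+\B(\cdot)]\varepsilon(\vv):\varepsilon(\vv)$ is coercive (Korn's inequality and strict positive definiteness of $\A$, using $\B$ positive definite); and, with $\ut$ the piecewise-constant interpolant of $(\uk)_k$, equation \eqref{eq:damage} with \eqref{eq:phi,z_neumann} for $z$, with $\beta$ replaced by $\betal$ and right-hand side $w-\Psi(\bar\varphi,\varepsilon(\ut))\in L^\infty(0,T;\Hs)$ (cf.\ \eqref{eq:ineq_psi} and the displacement bound). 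This defines a single-valued map $\mathcal T=\mathcal T_{\tau,\lambda}\colon(\bar\varphi,\bar\sigma,\bar z)\mapsto(\varphi,\sigma,z)$.

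The pointwise bounds on the outputs come from a maximum principle via Stampacchia-type truncation: testing \eqref{eq:phi_with_spaces} with $(\varphi-N)^+$ and with $\varphi^-$, using $p,g\ge0$ and the sign of $p\varphi(1-\varphi/N)-\varphi g$ at $\varphi=0,N$, gives $0\le\varphi\le N$ (so the truncation was inactive); testing \eqref{eq:sigma_with_spaces} with $(\sigma-M(t))^+$ for the affine supersolution $M(t)=M_0+J^*t$ and with $\sigma^-$, using $0\le J\le J^*$, the nonnegativity of $k_1\sigma/(k_2+\sigma)$ for $\sigma\ge0$ and $0\le\sigmag\le M_0\le M(t)$ in the Robin term, gives $0\le\sigma\le M:=M_0+J^*T$. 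Classical parabolic energy estimates — testing \eqref{eq:phi} and \eqref{eq:damage} with the solution and with $-\Delta(\cdot)$, testing \eqref{eq:lactate} with the solution, and using $\varphi_0,z_0\in\Vs$, $\sigma_0\in\Hs$, the monotonicity of $\betal$ and the concavity of $\widehat{\pi}$ — yield the regularity and bounds recorded in \Cref{defn:weak_solution}, with constants depending only on the data. For the discrete displacement, testing the $k$-th equation with $(\uk-\ukm)/\tau$, estimating the $\B$- and load-terms by Young's inequality and absorbing them into the coercive $\A$-term, and invoking the discrete Gronwall lemma, gives $\max_k\norm{(\uk-\ukm)/\tau}_{\Vs_0}\le C\bigl(\norm{\ff}_{L^\infty(\Hs)}+\norm{\uu_0}_{\Vs_0}\bigr)$, so the piecewise-linear interpolant $\uti$ is bounded in $W^{1,\infty}(0,T;\Vs_0)$, uniformly in $\tau$ and $\lambda$. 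Hence $\mathcal T(K)\subseteq K$, and by Aubin--Lions--Simon ($H^1(0,T;\Hs)\cap L^2(0,T;\Ws)\hookrightarrow\hookrightarrow L^2(Q)$ for $\varphi,z$; $H^1(0,T;\Vs')\cap L^2(0,T;\Vs)\hookrightarrow\hookrightarrow L^2(Q)$ for $\sigma$) the image $\mathcal T(K)$ is relatively compact in $L^2(Q)^3$. As for continuity: if $(\bar\varphi_m,\bar\sigma_m,\bar z_m)\to(\bar\varphi,\bar\sigma,\bar z)$ in $L^2(Q)^3$ and a.e.\ along a subsequence, all composed coefficients ($p(\bar\sigma_m,\bar z_m)$, $\A(\bar\varphi_m,\bar z_m)$, etc.) converge a.e.\ and boundedly, hence strongly in every $L^q(Q)$, $q<\infty$; the corresponding (bounded) solutions converge up to a subsequence to a solution of the limit problem, which is unique, so the whole sequence converges. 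The one delicate point is the strong convergence $\uu_m\to\uu$ in $L^2(0,T;\Vs_0)$ needed to pass to the limit in $\Psi(\bar\varphi_m,\varepsilon(\uu_m))$: one tests the equation satisfied by $\uu_m-\uu$ with $\partial_t(\uu_m-\uu)$, observes that the coefficient-mismatch terms $\bigl(\A(\bar\varphi_m,\bar z_m)-\A(\bar\varphi,\bar z)\bigr)\varepsilon(\partial_t\uu)$ and $\bigl(\B(\bar\varphi_m,\bar z_m)-\B(\bar\varphi,\bar z)\bigr)\varepsilon(\uu)$ tend to $0$ in $L^2(Q)$ by dominated convergence (the test fields $\varepsilon(\partial_t\uu),\varepsilon(\uu)$ being fixed), and concludes by coercivity of $\A$ and Gronwall. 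Schauder's theorem then gives a solution $(\varphi_{\tau,\lambda},\sigma_{\tau,\lambda},\uu_{\tau,\lambda},z_{\tau,\lambda})$ of the $(\tau,\lambda)$-regularized problem.

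It remains to pass to the limit $\tau\to0$ and $\lambda\to0$. All preceding estimates are uniform in $\tau,\lambda$; in addition, testing the damage equation with $\partial_t z_{\tau,\lambda}$ and with $\betal(z_{\tau,\lambda})$ and using $0\le\widehat{\beta}_\lambda(z_0)\le\widehat{\beta}(z_0)\in L^1$ shows that $\betal(z_{\tau,\lambda})$ is bounded in $L^2(Q)$, whence $z_{\tau,\lambda}$ is bounded in $H^1(0,T;\Hs)\cap L^\infty(0,T;\Vs)\cap L^2(0,T;\Ws)$ independently of $\lambda$. Extracting weak and weak-$*$ limits, using Aubin--Lions--Simon for the strong $L^2(Q)$-convergence (and $C^0([0,T];\Hs)$-convergence, for $\varphi$ and $z$) of $\varphi,\sigma,z$ and the dominated-convergence argument above for the strong $L^2(Q)$-convergence of $\varepsilon(\uu)$ (with $\varepsilon(\uti)\rightharpoonup\varepsilon(\uu)$ weakly-$*$ in $L^\infty(0,T;L^2(\Omega))$), one passes to the limit in every equation: the polynomial term $\varphi(1-\varphi/N)$ and the continuous/Carathéodory nonlinearities $p,g,k_1,k_2,J,\A,\B,\Psi$ are identified by a.e.\ and dominated convergence, the Michaelis--Menten term by the uniform $L^\infty$-bound on $\sigma$, the discrete-in-time terms of the displacement equation by standard interpolant estimates, and the weak $L^2(Q)$-limit $\xi$ of $\betal(z_{\tau,\lambda})$ is shown to belong to $\beta(z)$ a.e.\ by maximal monotonicity, via $\limsup\int_Q\betal(z_{\tau,\lambda})\,z_{\tau,\lambda}\le\int_Q\xi\,z$ (a consequence of the strong convergence of $z_{\tau,\lambda}$); since $\mathcal D(\beta)\subseteq[0,1]$, this forces $0\le z\le1$ a.e. The initial conditions are recovered from the $C^0$-in-time convergences.

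The hard part is the displacement equation together with its nonlinear, merely $L^\infty(Q)$-regular coupling to $\varphi$ and $z$: because $\A(\varphi,z),\B(\varphi,z)$ need not be Lipschitz in space, the $H^2$-regularity of \Cref{lemma:H2_inequality} is not available and the displacement is controlled only in $W^{1,\infty}(0,T;\Vs_0)$, a space carrying no spatial compactness; consequently both the continuity of the fixed-point map and the passage to the limit in $\Psi(\varphi,\epsilonu)$ hinge on transferring the strong $L^2(Q)$-convergence of $\varphi$ and $z$ (gained from parabolic smoothing) to $\uu$ through the dominated-convergence device above, and the whole scheme must be arranged so that the displacement estimate never requires more than $\ff\in L^\infty(0,T;\Hs)$ and $\uu_0\in\Vs_0$.
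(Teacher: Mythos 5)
Your proposal follows the same overall scheme as the paper: Schauder fixed point, Yosida regularization of $\beta$, truncation of the logistic and Michaelis--Menten nonlinearities, implicit Euler discretization of the displacement equation solved step by step via Lax--Milgram, maximum principles for $\varphi$ and $\sigma$, energy estimates, Aubin--Lions compactness, and identification of $\xi\in\beta(z)$ by maximal monotonicity. Within that scheme you make three genuine variations. First, you take the fixed point on the triple $(\bar\varphi,\bar\sigma,\bar z)$ in $L^2(Q)^3$ and keep the time step $\tau$ alive until the final limit, whereas the paper fixes only $(\bar\sigma,\bar z)$ in $L^2(0,T;\Hs)\times L^2(0,T;L^\infty)$ and removes $\tau$ inside each application of the map; the paper's choice lets the displacement equation see the \emph{output} $\varphi\in L^2(0,T;\Ws)$ rather than the frozen $\bar\varphi$, while yours defers more work to the diagonal limit $\tau,\lambda\to0$ (in particular the strong convergence of $\varepsilon(\ut)$ there is only sketched and would need the same comparison argument run against the continuous-in-time limit problem). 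Second, your $L^\infty$ bound on $\sigma$ via the affine supersolution $M(t)=M_0+J^*t$, tested with $(\sigma-M(t))^+$, is a clean and fully rigorous alternative to the paper's Moser--Alikakos iteration (which the authors themselves carry out only formally, via a truncation they do not write out); it yields the comparable constant $M_0+J^*T$ in place of $\max\{M_0,J^*\}e^T$. Third, for the delicate strong convergence of the strain you compare $\uu_m$ directly with the limit solution $\uu$ and kill the coefficient-mismatch terms $(\A(\bar\varphi_m,\bar z_m)-\A(\bar\varphi,\bar z))\varepsilon(\uut)$ by dominated convergence against the \emph{fixed} field $\varepsilon(\uut)\in L^2(Q)$; this needs only a.e.\ convergence of $\varphi_m,\bar z_m$, whereas the paper's Cauchy-sequence argument pairs $\norm{\varphi_k-\varphi_l}_{L^2(L^\infty)}$ with $\norm{\epsilonult}_{\Hs}$ and therefore must first upgrade $\varphi_k$ to strong $L^2(0,T;L^\infty)$ convergence through $H^{2-\epsilon}\hookrightarrow\hookrightarrow C^0(\overline\Omega)$. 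Your device is slightly more economical; the paper's has the advantage of not presupposing which limit the $\uu_k$ converge to. All three variations are sound, and the remaining steps (coercivity via Korn, the $L^2$ bound on $\beta_\lambda(z_\lambda)$, the monotonicity argument for $\xi$) match the paper.
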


\begin{remark} \label{remark:sigma_dep}
    As mentioned in the Introduction,  it is possible to allow the tensors $\A$, $\B$ to depend on the lactate $\sigma$. However, this would require $\sigma$ to have the same regularity as $\varphi$ and $z$ (see Step IV of the existence proof). To ensure this, we would need the additional assumptions $\sigma_0 \in \Vs$ and $\sigmag \in H^1(0,T;L^2_{\Gamma}) \cap L^2(0,T;H^{1/2}_{\Gamma})$. Nevertheless, we do not point out this additional regularity for $\sigma$ in the statement of the following regularity theorem, also because it is not needed for the continuous dependence result stated in \Cref{thm:continous_dependence}.
\end{remark}

\begin{thm}[Regularity]\label{thm:regularity}
    Under the set of Hypotheses \textrm{(A)} and \textrm{(B)}, the solution to the PDE system \eqref{eq:problem}--\eqref{eq:initial_conditions} we found in \Cref{thm:existence} enjoys the further regularity
    \begin{gather*}
        \varphi \in H^1(0,T;\Vs) \cap L^{\infty}(0,T;\Ws) \cap L^2(0,T;H^3),\\
         \uu \in W^{1,\infty}(0,T;\Wsz),\\
         z \in H^1(0,T;\Vs) \cap L^{\infty}(0,T;\Ws),
    \end{gather*}
    and the subgradient satisfies
    \begin{equation*}
        \xi \in L^{\infty}(0,T;\Hs).
    \end{equation*}
\end{thm}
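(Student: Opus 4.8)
The plan is to upgrade the regularity of the weak solution found in Theorem~\ref{thm:existence} by a bootstrap argument, working equation by equation and exploiting that we now have a genuine solution (not merely an approximating sequence) together with the stronger hypotheses \textrm{(B)}. The natural order is: first $\varphi$, then $z$ and $\xi$, then $\uu$ — since each gain feeds the next. Throughout, the strategy is to differentiate the equations in time (formally, and rigorously via a translation/difference quotient or a further Galerkin/elliptic-regularization scheme that one may set up exactly as in the existence proof, then pass to the limit), test with the appropriate higher-order quantities, and close Gronwall-type estimates using the boundedness $0\le\varphi\le N$, $0\le\sigma\le M$ already available.

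\emph{Step 1 (regularity of $\varphi$).} Formally differentiate \eqref{eq:phi} in time, set $\psi\coloneqq\partial_t\varphi$, and test the resulting equation with $\psi$ and then with $-\Delta\psi$. The right-hand side of \eqref{eq:phi} is, under \ref{hyp:uniqueness_eq_phi}, a $C^1$ function of $(\varphi,\sigma,z)$ times $\varphi$, hence its time derivative is controlled by $|\partial_t\varphi|+|\partial_t\sigma|+|\partial_t z|$ with $L^\infty$ coefficients; using $\partial_t\sigma\in L^2(0,T;\Vs')$ one first gets $\partial_t\varphi\in L^\infty(0,T;\Hs)\cap L^2(0,T;\Vs)$ (here one pairs carefully, or alternatively tests \eqref{eq:phi} directly with $\partial_t(-\Delta\varphi)$). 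Combined with elliptic regularity for $-\Delta\varphi=\partial_t\varphi+(\text{bounded})$ this yields $\varphi\in L^\infty(0,T;\Ws)$; differentiating the elliptic equation once more in space and using $\nabla\varphi\in L^\infty(0,T;\Hs)$ together with the now-available $\partial_t\varphi\in L^2(0,T;\Vs)$ gives $\varphi\in L^2(0,T;H^3)$. (One also needs $\varphi_0\in\Ws$, supplied by \ref{hyp:uniqueness_initial_conditions}, to start the time-differentiated estimate.)

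\emph{Step 2 (regularity of $z$ and $\xi$).} This is the delicate one. Test the $z$-inclusion \eqref{eq:z_with_spaces} with $\partial_t z$ to obtain, via the monotonicity of $\beta$ (the term $\int\xi\,\partial_t z$ handled through $\frac{d}{dt}\int\widehat\beta(z)$, finite since $\widehat\beta(z_0)\in L^1$), the bound $z\in H^1(0,T;\Vs)\cap L^\infty(0,T;\Vs)$ — wait, more precisely $\partial_t z\in L^2(0,T;\Hs)$ and $\nabla z\in L^\infty(0,T;\Hs)$ — using that $w\in H^1(0,T;\Hs)\subset L^\infty(0,T;\Hs)$ by \ref{hyp:uniqueness_w} and that $\Psi(\varphi,\epsilonu)$ is bounded in $L^\infty(0,T;\Hs)$ via \eqref{eq:ineq_psi}, the bound $0\le\varphi\le N$, and $\uu\in W^{1,\infty}(0,T;\Vs_0)$. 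Then rewrite \eqref{eq:damage} as $\xi=-\partial_t z+\Delta z-\pi(z)+w-\Psi(\varphi,\epsilonu)\in\beta(z)$; to bound $\xi$ in $L^\infty(0,T;\Hs)$ one tests the inclusion with $-\Delta z$ and uses the standard trick $\int\nabla\xi\cdot\nabla z=\int\beta'(z)|\nabla z|^2\ge0$ (rigorously via Yosida approximation $\beta_\lambda$, exactly as set up in the existence proof), obtaining $\Delta z\in L^\infty(0,T;\Hs)$, hence $z\in L^\infty(0,T;\Ws)$ by elliptic regularity and $\xi\in L^\infty(0,T;\Hs)$ by comparison. To get $\partial_t z\in L^2(0,T;\Vs)$ one differentiates \eqref{eq:damage} in time and tests with $\partial_t z$, using Lipschitz continuity of $\pi$ (\ref{hyp:pi}), of $\Psi$ (\ref{hyp:psi_lipschitz}) so that $\partial_t[\Psi(\varphi,\epsilonu)]$ is controlled by $|\partial_t\varphi|+|\varepsilon(\partial_t\uu)|\in L^2(0,T;\Hs)$ (the latter from $\uu\in W^{1,\infty}(0,T;\Vs_0)$, though one may here need the improved $\uu$-regularity from Step~3, so in practice Steps 2 and 3 are intertwined and closed together in a single Gronwall loop), $\partial_t w\in L^2(0,T;\Hs)$, and the monotonicity of $\beta$ to drop the $\partial_t\xi$ term after pairing with $\partial_t z$. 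The initial datum $\beta^0(z_0)\in\Hs$ and $z_0\in\Ws$ from \ref{hyp:uniqueness_initial_conditions} make the time-differentiated estimate well-posed at $t=0$.

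\emph{Step 3 (regularity of $\uu$).} With $\varphi,z$ now Lipschitz-or-better in time with values in $\Ws$, the coefficients $\A(\varphi,z),\B(\varphi,z)$ lie in $W^{1,\infty}(0,T;W^{1,\infty}(\Omega))$-type spaces (using $C^1$-boundedness of $\A,\B$ from \ref{hyp:AB} and the chain rule together with $\varphi,z\in L^\infty(0,T;\Ws)\hookrightarrow L^\infty(0,T;W^{1,q})$ in dimension $n\le 3$ — note $\Ws\hookrightarrow W^{1,\infty}$ fails for $n=3$, so one works with $W^{1,q}$ for large finite $q$, or invokes $\Ws\hookrightarrow C^0(\bar\Omega)$ and the $H^3$-regularity of $\varphi$). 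Freeze time and apply Lemma~\ref{lemma:H2_inequality} to the elliptic system $-\diver[\A(\varphi,z)\epsilonut]=\ff+\diver[\B(\varphi,z)\epsilonu]$: the right-hand side is in $\Hs$ provided $\uu\in\Ws$ (bootstrap) and $\ff\in L^\infty(\Hs)$, whence $\partial_t\uu\in L^\infty(0,T;\Wsz)$; then $\uu(t)=\uu_0+\int_0^t\partial_t\uu$ with $\uu_0\in\Wsz$ gives $\uu\in W^{1,\infty}(0,T;\Wsz)$. To make the bootstrap run one differentiates \eqref{eq:u_with_spaces} in time so that $\partial_t\uu$ solves an elliptic system with the \emph{same} principal part $\A(\varphi,z)$ and a right-hand side involving $\partial_t\A,\partial_t\B,\partial_t\ff$ and $\epsilonu,\epsilonut$; this is the usual maximal-regularity-in-time argument for a coefficient-dependent elliptic equation and requires $\ff$ to have a time derivative or, as here, is closed directly at the level of $\partial_t\uu\in L^\infty(0,T;\Wsz)$ using only $\ff\in L^\infty(\Hs)$.

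The main obstacle I expect is \textbf{Step~2} — the simultaneous regularization of $z$, $\xi$, and the time derivative $\partial_t z$ in the presence of the maximal monotone graph $\beta$ and the coupling term $\Psi(\varphi,\epsilonu)$. Two issues compound: first, the rigorous justification of testing the differential inclusion with $-\Delta z$ and with $\partial_t(-\Delta z)$, which must be done on the Yosida-regularized system and passed to the limit in $\lambda$, keeping all constants uniform; second, the fact that $\partial_t[\Psi(\varphi,\epsilonu)]$ needs $\epsilonut\in L^2(0,T;\Hs)$, i.e.\ it draws on the $\uu$-regularity of Step~3, so Steps~2 and~3 are genuinely coupled and must be closed together by a single Gronwall argument on the combined quantity $\|\partial_t z(t)\|_\Vs^2+\|\Delta z(t)\|_\Hs^2+\|\partial_t\uu(t)\|_{\Wsz}^2+\dots$. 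Once that loop is set up correctly the rest is routine elliptic regularity and interpolation.
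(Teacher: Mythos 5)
Your overall architecture (work at the Yosida level with $\lambda$-uniform constants, bootstrap $\varphi\to z,\xi\to\uu$, use \Cref{lemma:H2_inequality} for the displacement) matches the paper's, but three points in your write-up are either wrong or unnecessarily complicated, and one of them is a genuine gap.

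First, your primary route for $\varphi$ — differentiating \eqref{eq:phi} in time — does not work as stated: the time derivative of the right-hand side contains $p_{,\sigma}(\sigma,z)\,\partial_t\sigma\,\varphi(1-\varphi/N)$, and $\partial_t\sigma$ lives only in $L^2(0,T;\Vs')$; a product of a $\Vs'$-valued distribution with a merely bounded (and $t$-dependent) coefficient is not defined, and it cannot be paired against $\partial_t\varphi$ or $-\Delta\partial_t\varphi$. The paper never differentiates in time: it simply checks that the source $h=p(\sigma,z)\varphi(1-\varphi/N)-\varphi g(\sigma,z)$ belongs to $L^2(0,T;\Vs)$ (this needs only $\nabla\sigma\in L^2(0,T;\Hs)$, i.e.\ the already known $\sigma\in L^2(0,T;\Vs)$, together with \ref{hyp:uniqueness_eq_phi}) and invokes maximal parabolic regularity with $\varphi_0\in\Ws$ to get $H^1(\Vs)\cap L^\infty(\Ws)\cap L^2(H^3)$ in one stroke. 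Your parenthetical fallback (testing with $\partial_t(-\Delta\varphi)$) is the right fix, since it too only requires $\nabla h\in L^2(0,T;\Hs)$; you should make that the main argument. Second, in Step 2 your claim that testing the inclusion with $-\Delta z$ yields $\Delta z\in L^\infty(0,T;\Hs)$ is false: that test only reproduces $\nabla z\in L^\infty(\Hs)$ and $\Delta z\in L^2(\Hs)$. To reach $L^\infty(\Ws)$ and $\xi\in L^\infty(\Hs)$ one must use a test function carrying a time derivative of the second-order quantity; the paper tests \eqref{eq:damage_lambda} with $\partial_t(-\Delta z+\beta_\lambda(z))$, integrates by parts in time, and then separates $\|\Delta z\|_{L^\infty(\Hs)}$ from $\|\beta_\lambda(z)\|_{L^\infty(\Hs)}$ via $\int(-\Delta z)\beta_\lambda(z)=\int\beta_\lambda'(z)|\nabla z|^2\ge0$. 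Your alternative (differentiate in time, test with $\partial_t z$, then conclude by comparison) also works and needs exactly the data $z_0\in\Ws$, $\beta^0(z_0)\in\Hs$ from \ref{hyp:uniqueness_initial_conditions}, but as written your logic conflates the two tests.

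Third, the "main obstacle" you identify — a coupled Gronwall loop between the $z$- and $\uu$-estimates — does not exist. The term $\partial_t[\Psi(\varphi,\epsilonu)]$ is controlled by $|\partial_t\varphi|+|\epsilonut|$, and $\epsilonut\in L^2(0,T;\Hs)$ is already guaranteed by the \emph{basic} estimate $\uu\in W^{1,\infty}(0,T;\Vs_0)$ from \Cref{prop:existence_lambda} (estimate \eqref{eq:schauder_ul_estimate}); no improved displacement regularity is needed to close the $z$-estimate. The true dependency is strictly sequential: the $\uu$-estimate is the one that needs the improved $\varphi,z\in L^\infty(0,T;\Ws)$, because after expanding $\diver[\A(\varphi,z)\epsilonut]$ the commutator terms involve $\epsilonut\,\nabla\varphi$ and $\epsilonut\,\nabla z$, which are handled by H\"older with $\nabla\varphi,\nabla z\in L^\infty(0,T;L^4)$ (from $\Ws\hookrightarrow W^{1,4}$) and Ehrling's lemma, followed by Gronwall — exactly the $W^{1,q}$ device you sketch. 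With these corrections your proof coincides in substance with the paper's.
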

    
 \begin{thm}[Continuous dependence and uniqueness]  \label{thm:continous_dependence}
    Under the set of Hypotheses \textrm{(A)} and \textrm{(B)}, for every pair $\{(\varphi_i,\sigma_i,\uu_i,z_i)\}_{i=1,2}$ of weak solutions to \eqref{eq:problem}--\eqref{eq:initial_conditions} corresponding to the initial data $\{(\varphi_{0,i},\sigma_{0,i},\uu_{0,i},z_{0,i})\}_{i=1,2}$ and to the assigned functions $\{( \ff_i, w_i, \sigma_{\Gamma,i})\}_{i=1,2}$, the following continuous dependence inequality holds
    \begin{equation*}
        \begin{split}
            \norm{\varphi_1-&\varphi_2}_{L^{\infty}(\Hs)\cap L^2(\Vs)} + \norm{\sigma_1-\sigma_2}_{L^{\infty}(\Hs)\cap L^2(\Vs)}
            + \norm{\uu_1-\uu_2}_ {H^1(\Vs_0)} \\
            &+  \norm{z_1-z_2}_{L^{\infty}(\Hs)\cap L^2(\Vs)}\\
            &\begin{split}
                \leq C \Big(\norm{\varphi_{0,1} - \varphi_{0,2}}_{\Hs}&+\norm{\sigma_{0,1}-\sigma_{0,2}}_{\Hs}+\norm{\uu_{0,1}-\uu_{0,2}}_{\Hs} +\norm{z_{0,1}-z_{0,2}}_{\Hs}\\
                &+ \norm{\ff_1-\ff_2}_{L^2(\Hs)} + \norm{w_1 - w_2}_{L^2(\Hs)} +  \norm{\sigma_{\Gamma,1}-\sigma_{\Gamma,2}}_{L^2(L^2_{\Gamma})}  \Big)
             \end{split}
        \end{split}
    \end{equation*}
    for a positive constant $C$ that only depends on the problem data. In particular, the solution of \eqref{eq:problem} coupled with the boundary conditions \eqref{eq:boundary_conditions} and the initial conditions \eqref{eq:initial_conditions} is unique.
\end{thm}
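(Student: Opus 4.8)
The plan is to subtract the two weak formulations \eqref{eq:problem_eq_with_spaces}, write the system satisfied by the differences, and close a Gronwall estimate; the only genuinely delicate point is the nonlinear coupling through the $(\varphi,z)$-dependent tensors $\A,\B$, which forces us to invoke the higher regularity of \Cref{thm:regularity}. I would set $\varphi:=\varphi_1-\varphi_2$, $\sigma:=\sigma_1-\sigma_2$, $\uu:=\uu_1-\uu_2$, $z:=z_1-z_2$, $\xi:=\xi_1-\xi_2$, and $\ff:=\ff_1-\ff_2$, $w:=w_1-w_2$, $\sigma_\Gamma:=\sigma_{\Gamma,1}-\sigma_{\Gamma,2}$, and abbreviate $\A_i:=\A(\varphi_i,z_i)$, $\B_i:=\B(\varphi_i,z_i)$. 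Since both solutions take values in the fixed box for $(\varphi_i,\sigma_i,z_i)$, the maps $p,g$ (by \ref{hyp:uniqueness_eq_phi}), $k_1,k_2,J$ (by \ref{hyp:uniqueness_eq_lactate}, using ${k_2}_*>0$), $\pi$ (by \ref{hyp:pi}) and $\Psi$ (by \eqref{hyp:psi_lipschitz}) are Lipschitz on the relevant sets. Testing the $\varphi$-, $\sigma$- and $z$-difference equations with $\varphi$, $\sigma$ and $z$, and using these Lipschitz bounds, the monotonicity of $\beta$ (so that $\into\xi z\ge0$) and of the Michaelis--Menten term in its $\sigma$-slot, and the good boundary term $\norm{\sigma}_{L^2_\Gamma}^2$ coming from the Robin condition (which absorbs the $\sigma_\Gamma$-datum difference up to $\norm{\sigma_\Gamma}_{L^2_\Gamma}^2$), one gets three inequalities of the shape $\tfrac12\tfrac{\dd}{\dt}\norm{\cdot}_\Hs^2+\norm{\nabla\cdot}_\Hs^2\,(+\text{bdry})\le C(\norm{\varphi}_\Hs^2+\norm{\sigma}_\Hs^2+\norm{z}_\Hs^2)+(\text{data difference})$, the only extra term being $\tfrac12\norm{\epsilonu}_\Hs^2$ on the right-hand side of the $z$-estimate, coming from $\into(\Psi(\varphi_1,\varepsilon(\uu_1))-\Psi(\varphi_2,\varepsilon(\uu_2)))z$ and controllable precisely because $\Psi$ is assumed only Lipschitz, not quadratic, in $\epsilonu$.

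The main work is the viscoelastic equation. I would subtract the two copies of \eqref{eq:u_with_spaces}, write $\A_1\varepsilon(\partial_t\uu_1)-\A_2\varepsilon(\partial_t\uu_2)=\A_1\epsilonut+(\A_1-\A_2)\varepsilon(\partial_t\uu_2)$ and $\B_1\varepsilon(\uu_1)-\B_2\varepsilon(\uu_2)=\B_1\epsilonu+(\B_1-\B_2)\varepsilon(\uu_2)$, and test with $\vv=\uut$. By \eqref{cond_ellipt} together with Korn's inequality the leading term is coercive, $\into\A_1\epsilonut:\epsilonut\ge c\norm{\uut}_{\Vs_0}^2$; the semidefinite term $\into\B_1\epsilonu:\epsilonut$ and the force $\into\ff\cdot\uut$ are estimated by $\delta\norm{\uut}_{\Vs_0}^2+C_\delta(\norm{\uu}_{\Vs_0}^2+\norm{\ff}_\Hs^2)$. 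The decisive terms are $\into(\A_1-\A_2)\varepsilon(\partial_t\uu_2):\epsilonut$ and $\into(\B_1-\B_2)\varepsilon(\uu_2):\epsilonut$: since $\A,\B\in C^1$ with bounded derivatives \eqref{hyp:AB} one has $|\A_1-\A_2|+|\B_1-\B_2|\le C(|\varphi|+|z|)$ pointwise, while \Cref{thm:regularity} grants $\uu_2\in W^{1,\infty}(0,T;\Wsz)$, so $\varepsilon(\uu_2),\varepsilon(\partial_t\uu_2)\in L^\infty(0,T;H^1)$, which embeds into $L^\infty(0,T;L^6)$ for $n=3$ (and into $L^\infty(0,T;L^4)$, still enough, for $n=2$). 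A Hölder splitting $L^3\!\cdot\!L^6\!\cdot\!L^2$ (resp. $L^4\!\cdot\!L^4\!\cdot\!L^2$), then \Cref{lemma:special_case_gagliardo_nirenberg} applied to $\varphi$ and $z$, then Young, bound these two terms by $\delta\norm{\uut}_{\Vs_0}^2+\eta R^2(\norm{\nabla\varphi}_\Hs^2+\norm{\nabla z}_\Hs^2)+C_{\delta,\eta}R^2(\norm{\varphi}_\Hs^2+\norm{z}_\Hs^2)$, where $R$ is a fixed bound for $\norm{\uu_1}_{W^{1,\infty}(\Wsz)}+\norm{\uu_2}_{W^{1,\infty}(\Wsz)}$. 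Choosing $\delta$ small, $c\norm{\uut}_{\Vs_0}^2\le C(\norm{\uu}_{\Vs_0}^2+\norm{\ff}_\Hs^2+\norm{\varphi}_\Hs^2+\norm{z}_\Hs^2)+\eta R^2(\norm{\nabla\varphi}_\Hs^2+\norm{\nabla z}_\Hs^2)$.

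I would then multiply the $\varphi$- and $z$-estimates of the first step by a constant $\kappa>\eta R^2$, add the four inequalities, and discard the remaining nonnegative dissipation terms; with $y:=\kappa\norm{\varphi}_\Hs^2+\norm{\sigma}_\Hs^2+\kappa\norm{z}_\Hs^2$ this yields $\tfrac{\dd}{\dt}y+c\norm{\uut}_{\Vs_0}^2\le C\,y+C\norm{\uu}_{\Vs_0}^2+G$ with $G:=C(\norm{\ff}_\Hs^2+\norm{w}_\Hs^2+\norm{\sigma_\Gamma}_{L^2_\Gamma}^2)\in L^1(0,T)$. Since $\uu(t)=(\uu_{0,1}-\uu_{0,2})+\int_0^t\uut$, one has $\norm{\uu(t)}_{\Vs_0}^2\le2\norm{\uu_{0,1}-\uu_{0,2}}_{\Vs_0}^2+2T\int_0^t\norm{\uut}_{\Vs_0}^2$; integrating in time, using Fubini, and absorbing the arising $\int_0^t\!\int_0^s\norm{\uut}_{\Vs_0}^2$ into $c\int_0^t\norm{\uut}_{\Vs_0}^2$ on an interval $[0,T_0]$ whose length depends only on the data, Gronwall controls $\sup y$ and $\int\norm{\uut}_{\Vs_0}^2$, hence $\norm{\uu}_{H^1(\Vs_0)}$ and, keeping the dissipation terms, $\norm{\nabla\varphi}_{L^2(\Hs)}$, $\norm{\nabla\sigma}_{L^2(\Hs)}$, $\norm{\nabla z}_{L^2(\Hs)}$, all in terms of the data differences; finitely many such steps cover $[0,T]$, the $\Vs_0$-norm of $\uu_1-\uu_2$ at the nodes being controlled via $H^1(0,T_0;\Vs_0)\hookrightarrow C^0([0,T_0];\Vs_0)$. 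Alternatively the subdivision is avoided by estimating $\uut(t)$ for a.e.\ $t$ from the elliptic problem it solves, $\norm{\uut(t)}_{\Vs_0}\le C\big(\norm{\ff(t)}_\Hs+\norm{\uu(t)}_{\Vs_0}+R(\norm{\varphi(t)}_{L^3}+\norm{z(t)}_{L^3})\big)$ by Lax--Milgram with the coercive form $\vv\mapsto\into\A_1(t)\varepsilon(\vv):\varepsilon(\vv)$, and feeding this into $\tfrac{\dd}{\dt}\norm{\uu}_{\Vs_0}^2=2(\uut,\uu)_{\Vs_0}$, adjoining $\norm{\uu(t)}_{\Vs_0}^2$ to the Gronwall variable. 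Uniqueness is the case of coinciding data.

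The principal obstacle is the second step: it is the dependence of $\A,\B$ on $(\varphi,z)$ that produces the terms $\into(\A_1-\A_2)\varepsilon(\partial_t\uu_2):\epsilonut$ and $\into(\B_1-\B_2)\varepsilon(\uu_2):\epsilonut$, and I do not see how to absorb them without the extra regularity $\uu_i\in W^{1,\infty}(0,T;\Wsz)$ (which in $n=3$ places $\varepsilon(\uu_i),\varepsilon(\partial_t\uu_i)$ in $L^\infty(0,T;L^6)$) supplied by \Cref{thm:regularity}; this is exactly why that theorem is proved before the present one.
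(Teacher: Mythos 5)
Your proposal is correct and follows essentially the same route as the paper: testing the difference equations by $\varphi$, $\sigma$, $\partial_t\uu$, $z$, splitting $\A_1\varepsilon(\partial_t\uu_1)-\A_2\varepsilon(\partial_t\uu_2)$ into $\A_1\epsilonut+(\A_1-\A_2)\varepsilon(\partial_t\uu_2)$, using the $W^{1,\infty}(0,T;\Wsz)$ regularity to place $\varepsilon(\partial_t\uu_2)$ in $L^\infty(L^6)$, the $L^3\cdot L^6\cdot L^2$ H\"older splitting with \Cref{lemma:special_case_gagliardo_nirenberg}, and a Gronwall closure. The only (harmless) deviation is your time-subdivision to absorb $\int_0^t\!\int_0^s\norm{\uut}_{\Vs_0}^2$: this is unnecessary, since after integrating in time that iterated integral simply enters the Gronwall kernel for the variable $Y(t)=\norm{\varphi(t)}_{\Hs}^2+\norm{\sigma(t)}_{\Hs}^2+\norm{z(t)}_{\Hs}^2+\int_0^t\norm{\epsilonut}_{\Hs}^2$, exactly as the paper does.
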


\subsection{Proof of \Cref{thm:existence}}
\label{sec:th1}

The proof of \Cref{thm:existence} will be performed through a Schauder fixed-point argument. To do it properly, the first step consists of proving the existence of weak solutions for an approximate system. 

\subsubsection{The approximate system}
The approximate problem is obtained by replacing the maximal monotone operator $\beta$ with its Yosida approximation $\beta_{\lambda} \coloneqq ({\widehat{\beta}_{\lambda}})'$, where $\lambda \in (0,\lambda^*)$ is intended to go to $0$ in the limit. Moreover, we introduce the truncation function
\begin{equation}\label{eq:alpha}
    \alpha(\varphi) \coloneqq \begin{cases}
        \displaystyle \varphi \left(1 - \frac{\varphi}{N} \right) &\text{ if } 0 \leq \varphi \leq N\\
        0 &\text{ otherwise}\\
    \end{cases}
\end{equation}
and we use it in the \eqref{eq:phi} in order to have a bounded term on the right-hand side instead of a quadratic one. Finally, in equation \eqref{eq:lactate} we substitute the denominator $k_2(\varphi,z) + \sigma$ with $k_2(\varphi,z) + |\sigma|$ to be sure that it does not vanish. This way, we get the approximate PDE system:
\begin{subequations}\label{eq:problem_lambda}
    \begin{align}
        & \partial_t\varphi - \Delta \varphi = p(\sigma,z)\alpha(\varphi) - \varphi g(\sigma,z), \label{eq:phi_lambda}\\
        & \partial_t\sigma - \Delta \sigma + \frac{k_1(\varphi,z)\sigma}{k_2(\varphi,z) + |\sigma|} = J(\varphi,z), \label{eq:lactate_lambda}\\
        &  - \diver\left[\A(\varphi,z) \epsilonut + \B(\varphi,z)\epsilonu  \right] = \ff, \label{eq:displacement_lambda}\\
        & \partial_t z - \Delta z + \beta_{\lambda}(z) + \pi(z)  =  w - \Psi(\varphi,\epsilonu) .\label{eq:damage_lambda}
    \end{align}
\end{subequations}

\begin{defn}\label{defn:weak_solution_approximate}
    We say that the quadruplet $(\varphi,\, \sigma,\, \uu,\, z)$ is a weak solution to the approximate PDE system \eqref{eq:problem_lambda} endowed with the boundary and initial conditions \eqref{eq:boundary_conditions}--\eqref{eq:initial_conditions} if  
    \begin{gather*}
            \varphi \in   H^1(0,T;\Hs) \cap L^{\infty}(0,T; \Vs) \cap L^{2}(0,T;\Ws),\\
            \sigma \in H^1(0,T;\Vs') \cap L^{\infty}(0,T;\Hs) \cap L^2(0,T;\Vs), \\
            \uu \in W^{1,\infty}(0,T;\Vs_0)\\
            z \in   H^1(0,T;\Hs) \cap L^{\infty}(0,T; \Vs) \cap L^{2}(0,T;\Ws),
    \end{gather*}
   with 
   \begin{gather*}
       \varphi(0)=\varphi_0, \quad \sigma(0)=\sigma_0, \quad  \uu(0)=\uu_0,  \quad z(0)=z_0\quad \text{a.e. in }\Omega
   \end{gather*}
  and it satisfies 
    \begin{subequations}\label{eq:problem_eq_with_spaces_approx}
        \begin{align}
        & \into \partial_t\varphi \zeta +  \nabla \varphi \cdot \nabla \zeta \dx = \into \left[p(\sigma,z)\alpha(\varphi) - \varphi g(\sigma,z)\right] \zeta \dx,\label{eq:phi_with_spaces_approx}\\
        & \into \partial_t\sigma \zeta + \nabla \sigma \cdot \nabla \zeta + \frac{k_1(\varphi,z)\sigma \zeta }{k_2(\varphi,z) + |\sigma|}\dx + \intg (\sigma - \sigmag) \zeta \dS = \into J(\varphi,z) \zeta \dx,\label{eq:sigma_with_spaces_approx}\\
        &  \into  \left[\A(\varphi,z) \epsilonut + \B(\varphi,z)\epsilonu  \right] : \varepsilon(\vv) \dx = \into \ff \cdot \vv \dx,\label{eq:u_with_spaces_approx}\\
        & \into \partial_t z \zeta + \nabla z \cdot \nabla \zeta  + \beta_{\lambda}(z) \zeta  + \pi(z) \zeta \dx = \into \left[w -  \Psi(\varphi,\epsilonu)\right] \zeta \dx,\label{eq:z_with_spaces_approx}
    \end{align}
\end{subequations}
a.e. in $(0,T)$, for every $\zeta \in \Vs$ and $\vv \in \Vs_0$.
\end{defn}

\noindent Notice that, if $(\varphi, \sigma, \uu, z)$ is a solution of \eqref{eq:problem_lambda}, \eqref{eq:boundary_conditions}, \eqref{eq:initial_conditions} in the sense of \Cref{defn:weak_solution_approximate} and we are able to prove that $0 \leq \varphi \leq N$ and $0 \leq \sigma \leq M$, then the truncation function in the equation \eqref{eq:phi_lambda} and the modulus in equation \eqref{eq:lactate_lambda} can be removed.

\begin{prop} \label{prop:existence_lambda}
    For all $\lambda \in (0,\lambda^*)$, the approximate PDE system \eqref{eq:problem_lambda} endowed with the boundary and initial conditions \eqref{eq:boundary_conditions}--\eqref{eq:initial_conditions} admits at least one weak solution $(\varphi_{\lambda},\sigma_{\lambda},\uu_{\lambda},z_{\lambda})$ in the sense of \Cref{defn:weak_solution_approximate}. Moreover, we have that
    \begin{equation} \label{eq:aprox_syst_phi_sigma_boundedness}
        0 \leq \varphi_{\lambda} \leq N, \qquad 0 \leq \sigma_{\lambda} \leq M 
    \end{equation}
    a.e. in $Q$, where $M=\max\{M_0, J^*\}e^T$. Finally, there exists a positive constant $C$ depending only on the data of the problem and not depending on $\lambda$ such that
    \begin{align}
         \norm{\varphi_{\lambda}}_{H^1(\Hs)\cap L^{\infty}(\Vs) \cap L^2(\Ws)} &\leq C,\label{eq:schauder_phil_estimate}\\
         \norm{\sigma_{\lambda}}_{H^1(\Vs')\cap L^{\infty}(\Hs) \cap L^2(\Vs)} &\leq C,\label{eq:schauder_sigmal_estimate}\\
         \norm{\uu_{\lambda}}_{W^{1,\infty}(\Vs_0)}&\leq C,\label{eq:schauder_ul_estimate}\\
         \norm{z_{\lambda}}_{H^1(\Hs)\cap L^{\infty}(\Vs) \cap L^2(\Ws)}&\leq C,\label{eq:schauder_zl_estimate}\\
          \norm{\beta_{\lambda}(z_{\lambda})}_{ L^2(\Hs)}&\leq C.\label{eq:schauder_betal_estimate}
    \end{align}
\end{prop}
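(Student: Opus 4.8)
The plan is to prove \Cref{prop:existence_lambda} by a Schauder fixed-point argument in which the (new) displacement equation is treated by time discretization, and in which every bound is tracked so as to be independent of $\lambda$. The fixed point is set up on the triple $(\bar\varphi,\bar\sigma,\bar z)$ in a closed, bounded, convex subset $K$ of $L^2(Q)^3$ carrying the constraints $0\le\bar\varphi\le N$, $0\le\bar\sigma\le M$, \emph{leaving $\uu$ to be solved exactly inside the map} — this is crucial, since $\uu$ carries no spatial compactness and cannot be a fixed-point variable. Given $(\bar\varphi,\bar\sigma,\bar z)$, I would: (i) solve the semilinear parabolic problem \eqref{eq:phi_lambda} with Neumann datum, reading $p,g$ at $(\bar\sigma,\bar z)$ and keeping $\alpha(\varphi)$ on the output — standard theory (Lipschitz, bounded nonlinearity by \ref{hyp:eq_phi}; $\varphi_0\in\Vs$, $\Omega$ of class $C^2$) gives $\varphi\in H^1(0,T;\Hs)\cap L^\infty(0,T;\Vs)\cap L^2(0,T;\Ws)$; (ii) solve \eqref{eq:lactate_lambda} with the Robin datum, reading $k_1,k_2,J$ at $(\bar\varphi,\bar z)$ (the truncated denominator makes the reaction term Lipschitz, and \ref{hyp:eq_lactate} makes it bounded), getting $\sigma\in H^1(0,T;\Vs')\cap L^\infty(0,T;\Hs)\cap L^2(0,T;\Vs)$; (iii) solve \eqref{eq:displacement_lambda} with coefficients $\A(\bar\varphi,\bar z),\B(\bar\varphi,\bar z)$ (Step 2 below), getting $\uu\in W^{1,\infty}(0,T;\Vs_0)$; (iv) solve \eqref{eq:damage_lambda} with Neumann datum and right-hand side $w-\Psi(\bar\varphi,\epsilonu)\in L^\infty(0,T;\Hs)$ (by \eqref{eq:ineq_psi}, $\beta_\lambda,\pi$ Lipschitz), getting $z\in H^1(0,T;\Hs)\cap L^\infty(0,T;\Vs)\cap L^2(0,T;\Ws)$. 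The map $\mathcal T\colon(\bar\varphi,\bar\sigma,\bar z)\mapsto(\varphi,\sigma,z)$ maps $K$ into $K$ once the bounds of Step 3 are available, $\mathcal T(K)$ is relatively compact in $L^2(Q)^3$ by Aubin--Lions--Simon, and $\mathcal T$ is continuous on $K$: a.e.\ and bounded convergence of the coefficients lets the parabolic solves pass to the limit (uniform bounds $+$ uniqueness), while the only delicate point, $\epsilon(\uu_n)\to\epsilonu$ strongly, gives $\Psi(\bar\varphi_n,\epsilon(\uu_n))\to\Psi(\bar\varphi,\epsilonu)$ in $L^2(Q)$ by \ref{hyp:psi}. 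Schauder then yields a fixed point, i.e.\ a weak solution of \eqref{eq:problem_lambda}.

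For Step 2, with fixed coefficients $\A(\bar\varphi,\bar z),\B(\bar\varphi,\bar z)$ ($C^1$, bounded, $\A$ uniformly elliptic via \eqref{cond_ellipt}, $\B$ positive semidefinite), I would discretize \eqref{eq:displacement_lambda} on a uniform partition $t_\tau^k=k\tau$: given $\ukm$, Lax--Milgram applied to the coercive form $\vv\mapsto\tfrac1\tau\into\Ak\epsilonvk:\epsilon(\vv)+\into\Bk\epsilonvk:\epsilon(\vv)$ (coercivity from \eqref{cond_ellipt} plus Korn, the $\Bk$ part having the right sign) produces a unique $\uk\in\Vs_0$. Testing the scheme with $\uk-\ukm$ and exploiting that the viscous term dominates by the factor $1/\tau$ gives $\|\epsilon(\uk-\ukm)\|_{\Hs}\le C\tau(\|\epsilonuk\|_{\Hs}+\|\fk\|_{\Hs})$, hence a discrete Gronwall estimate yields $\|\uk\|_{\Vs_0}+\|(\uk-\ukm)/\tau\|_{\Vs_0}\le C$ with $C$ depending only on the ellipticity constant of $\A$, $\|\B\|_\infty$, $T$, $\|\uu_0\|_{\Vs_0}$, $\|\ff\|_{L^\infty(\Hs)}$ — in particular independent of $\lambda$ and of the fixed-point iterate, and \emph{without any time-regularity of $\bar\varphi,\bar z$}. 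Letting $\tau\to0$ (the piecewise-affine interpolants are bounded in $W^{1,\infty}(0,T;\Vs_0)$ and precompact in $C^0([0,T];\Hs)$; $\A(\bar\varphi_\tau,\bar z_\tau)\to\A(\bar\varphi,\bar z)$ in every $L^p(Q)$ by dominated convergence, which suffices to pair against the weakly-$*$ converging strains) gives $\uu\in W^{1,\infty}(0,T;\Vs_0)$ solving \eqref{eq:u_with_spaces_approx} with $\uu(0)=\uu_0$, and the estimate \eqref{eq:schauder_ul_estimate}. For the continuity in Step 1, the key point is that the difference of two solutions corresponding to coefficients $\A(\bar\varphi_n,\bar z_n)\to\A(\bar\varphi,\bar z)$ solves an elliptic-in-space identity whose forcing tends to $0$ in $L^2(Q)$ by dominated convergence \emph{evaluated on the fixed limit strain}; a Gronwall argument in time then gives $\partial_t\uu_n\to\uut$ in $L^2(0,T;\Vs_0)$, hence $\epsilon(\uu_n)\to\epsilonu$ in $C^0([0,T];\Hs)$.

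For Step 3, the bounds $0\le\varphi_\lambda\le N$ and $0\le\sigma_\lambda\le M$ with $M=\max\{M_0,J^*\}e^T$ follow from maximum-principle/truncation arguments: testing \eqref{eq:phi_lambda} with $-\varphi^-$ and with $(\varphi-N)^+$ and using $\alpha\equiv0$ outside $[0,N]$ together with $g\ge0$; testing \eqref{eq:lactate_lambda} with $-\sigma^-$ and with $(\sigma-\overline\sigma)^+$ for the supersolution $\overline\sigma(t)=\max\{M_0,J^*\}e^t$, exploiting $k_1\ge0$, $J\ge0$ and $0\le\sigmag\le M_0$ in the Robin term (a Moser iteration is an alternative route). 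Once these hold, $\alpha(\varphi_\lambda)=\varphi_\lambda(1-\varphi_\lambda/N)$ and the modulus in \eqref{eq:lactate_lambda} disappear. The remaining estimates follow in cascade, each $\lambda$-free: \eqref{eq:schauder_phil_estimate} by testing \eqref{eq:phi_lambda} with $\varphi_\lambda$, $\partial_t\varphi_\lambda$, $-\Delta\varphi_\lambda$ (right-hand side at most linear in $\varphi_\lambda$ and bounded by the truncation); \eqref{eq:schauder_sigmal_estimate} likewise from \eqref{eq:lactate_lambda} (reaction/source bounded by $k_1^*,J^*$, Robin term absorbed by Ehrling); \eqref{eq:schauder_ul_estimate} from Step 2; then \eqref{eq:schauder_zl_estimate} and \eqref{eq:schauder_betal_estimate} from \eqref{eq:damage_lambda}: testing with $\partial_t z_\lambda$ and $z_\lambda$ uses $\int\beta_\lambda(z_\lambda)\partial_t z_\lambda=\tfrac{d}{dt}\int\widehat\beta_\lambda(z_\lambda)$ and concavity of $\widehat\pi$, testing with $\beta_\lambda(z_\lambda)$ uses monotonicity of $\beta_\lambda$ for the gradient term and the standard bound $\widehat\beta_\lambda(z_0)\le\widehat\beta(z_0)\in L^1$, while the source $w-\Psi(\varphi_\lambda,\epsilon(\uu_\lambda))$ is controlled in $L^\infty(0,T;\Hs)$ by \ref{hyp:w}, \eqref{eq:ineq_psi}, $0\le\varphi_\lambda\le N$ and \eqref{eq:schauder_ul_estimate}; elliptic regularity then upgrades the bound on $z_\lambda$ to $L^2(0,T;\Ws)$. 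I expect the genuine difficulty to be concentrated in Step 2: the $\lambda$- and iterate-uniform estimates for $\uu$ and, above all, the strong convergence of the strains $\epsilon(\uu_n)$ needed to pass to the limit in $\Psi(\varphi,\epsilonu)$ — the absence of spatial smoothing in \eqref{eq:displacement} being precisely what rules out a quadratic $\Psi$ here.
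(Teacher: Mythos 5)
Your proposal is correct and follows the same overall architecture as the paper's proof: a Schauder fixed point in which the displacement is solved exactly inside the map by time discretization and Lax--Milgram, $L^\infty$ bounds on $\varphi$ and $\sigma$ established first so that the truncation $\alpha$ and the modulus can be dropped, and a cascade of $\lambda$-independent energy estimates ending with the bound on $\beta_\lambda(z_\lambda)$. Two technical choices differ and are worth comparing. First, the paper fixes only $(\overline\sigma,\overline z)$ in $\mathcal X=L^2(0,T;\Hs)\times L^2(0,T;L^\infty)$ and produces $\varphi$ inside the map, whereas you fix $(\overline\varphi,\overline\sigma,\overline z)$ in $L^2(Q)^3$. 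Both are viable, but the paper's $L^2(0,T;L^\infty)$ topology for the $z$-component is forced by its argument for the strong convergence of the strains: it shows that $\{\uu_k\}$ is Cauchy in $H^1(0,T;\Vs_0)$ by estimating $[\A(\varphi_k,\overline z_k)-\A(\varphi_l,\overline z_l)]\varepsilon(\partial_t\uu_l)$, which requires control of $\norm{\varphi_k-\varphi_l}_{L^\infty(\Omega)}$ and $\norm{\overline z_k-\overline z_l}_{L^\infty(\Omega)}$ pointwise in time (supplied by the compact embedding of $H^{2-\epsilon}$ into $C^0(\overline\Omega)$ for the output $\varphi_k$, and by the topology of $\mathcal X$ for the input $\overline z_k$). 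Your variant --- comparing $\uu_n$ directly with the limit solution $\uu$, so that the forcings $(\A_n-\A)\varepsilon(\uut)$ and $(\B_n-\B)\epsilonu$ involve only the fixed limit strains and tend to $0$ in $L^2(Q)$ by dominated convergence --- needs only a.e.\ convergence of the coefficients and therefore works in the plain $L^2(Q)$ topology; this is a genuine simplification of the one delicate point of the proof. Second, for the upper bound $\sigma\le M$ you propose comparison with the spatially constant supersolution $t\mapsto\max\{M_0,J^*\}e^t$ (checking the Robin condition via $0\le\sigmag\le M_0$), while the paper runs a Moser--Alikakos iteration; both yield the same constant $M=\max\{M_0,J^*\}e^T$, and the comparison argument is the more elementary of the two. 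The remaining steps (maximum principle for $\varphi$, the discrete estimates and the passage $\tau\to0$ for $\uu$, the tests of the damage equation giving the $H^1(\Hs)\cap L^\infty(\Vs)$ bound and the $L^2$ bound on $\beta_\lambda(z_\lambda)$ via $\widehat\beta_\lambda\le\widehat\beta$ and the sign of $\beta_\lambda'$) coincide with the paper's.
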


\begin{proof}
    The proof is based on the Schauder fixed-point argument (see e.g. \cite[][p. 179]{brezis2011}). We introduce the Banach space
    \begin{equation*}
        \mathcal{X} \coloneqq \{(\sigma,z) \in   L^2(0,T;\Hs) \times L^2(0,T;L^{\infty})\}
    \end{equation*}
    endowed with the standard norm
    \begin{equation*}
        \norm{(\sigma,z)}_{\mathcal{X}} \coloneq \norm{\sigma}_{L^2(\Hs)} + \norm{z}_{L^2(L^{\infty})}.
    \end{equation*}
    In what follows, we construct an operator $\gamma : \mathcal{X} \to \mathcal{X}$, to which we intend to apply the Schauder fixed-point argument. \\

    \noindent \textit{Step 1. } Starting from $( \overline{\sigma}, \overline{z}) \in \mathcal{X}$, we find $\varphi \in H^1(0,T;\Hs) \cap L^{\infty}(0,T; \Vs) \cap L^{2}(0,T;\Ws)$ as the unique solution of the semilinear parabolic equation with Lipschitz continuous nonlinearity
\begin{equation}\label{eq:schauder_phi}
        \begin{cases}
        \partial_t \varphi -\Delta \varphi = p(\overline{\sigma}, \overline{z}) \alpha(\varphi) - \varphi g(\overline{\sigma}, \overline{z}) & \text{ in } Q,\\
        \partial_{\nnu} \varphi = 0 & \text{ on } \Sigma,\\
        \varphi(0) = \varphi_0 & \text{ in } \Omega,
    \end{cases}
\end{equation}
exploiting that $g(\overline{\sigma}, \overline{z}) \in L^{\infty}(Q)$ and $p(\overline{\sigma}, \overline{z}) \in L^{\infty}(Q)$. The well-posedness of this system can be proved in several classical ways, such as using Galerkin discretization (see, e.g., \cite[Lemma 5.3, p. 373]{Troltzsch_2010}) or semigroup theory (see, e.g., \cite[Chapter 20]{pata19}). The regularity can be shown by standard results for linear parabolic equations (see \cite{Dautray_Lions_92, Lions61}), from which we obtain the estimate
    \begin{equation}\label{eq:schauder_phi_estimate}
        \norm{\varphi}_{H^1(\Hs)\cap L^{\infty}(\Vs) \cap L^2(\Ws)} \leq C,
    \end{equation}
    for a certain positive constant $C$ independent from $\lambda$ and $(\overline{\sigma},\overline{z})$. Next, we aim to prove that 
\begin{equation}\label{eq:schauder_phi_N}
    0 \leq \varphi \leq N.
\end{equation}
To do so, we test the first equation of the system \eqref{eq:schauder_phi} with $(\varphi-N)^+$ and we integrate over $\Omega$. Thanks to the boundary condition, we obtain:
\begin{align*}
    \frac{1}{2} \frac{\textrm{d}}{\dt}\into |(\varphi-N)^+|^2 \dx &\leq \frac{1}{2} \frac{\textrm{d}}{\dt}\into |(\varphi-N)^+|^2 \dx + \into |\nabla [(\varphi-N)^+]|^2 \dx\\
    &= \into p(\overline{\varphi},\overline{z}) \alpha(\varphi)(\varphi-N)^+ \dx - \into \varphi g(\overline{\sigma},\overline{z})(\varphi-N)^+ \dx \leq 0,
\end{align*}
    where the last inequality holds because where $\varphi \geq N$ by definition $\alpha(\varphi)=0$ (so the first addend is equal to 0) and $\varphi$ is trivially positive (so the second addend is non-positive). Integrating in time, it follows that
    \begin{equation*}
        \into |(\varphi-N)^+|^2 \dx \leq \into |(\varphi_0-N)^+|^2 \dx = 0,
    \end{equation*}
    employing the Hypothesis \ref{hyp:initial_conditions} according to which $\varphi_0 \leq N$. As a consequence, $\varphi \leq N$ a.e. in $Q$. In a very similar way, we test the same equation by $-\varphi^-$ and integrate over $\Omega$, obtaining:
    \begin{align*}
        \frac{1}{2} \frac{\textrm{d}}{\dt} \into |\varphi^-|^2 \dx &\leq \frac{1}{2} \frac{\textrm{d}}{\dt} \into |\varphi^-|^2 \dx + \into |\nabla \varphi^-| \dx\\
        &= - \into p(\overline{\sigma},\overline{z})\alpha(\varphi)\varphi^- \dx - \into |\varphi^-|^2 g(\overline{\sigma},\overline{z}) \dx \leq 0.
    \end{align*}
    Integrating in time, it follows that
    \begin{equation*}
        \into |\varphi^-|^2 \dx \leq \into |\varphi_0^-|^2 \dx=0,
    \end{equation*}
    since $\varphi_0 \geq 0$ by Hypothesis \ref{hyp:initial_conditions}, so $\varphi \geq 0$ a.e. in $Q$. Notice that we can consequentially remove the truncation $\alpha$.\\

     \noindent \textit{Step 2.} Starting from $(\overline{\sigma},\overline{z})$ and $\varphi$, we find $\sigma \in  H^1(0,T;\Vs') \cap L^{\infty}(0,T;\Hs) 
    \cap L^2(0,T;\Vs)$ as the unique solution of the following linear parabolic equation
     \begin{equation}\label{eq:schauder_sigma}
         \begin{cases}
             \displaystyle  \partial_t \sigma -\Delta\sigma + \frac{k_1(\varphi,\overline{z})\sigma}{k_2(\varphi,\overline{z}) + |\overline{\sigma}|} = J(\varphi,\overline{z}) & \text{in } Q,\\
             \partial_{\nnu}\sigma + \sigma -\sigma_{\Gamma} = 0 & \text{on } \Sigma,\\
             \sigma(0)=\sigma_0 & \text{in } \Omega.
         \end{cases}
     \end{equation}
     Using Hypothesis \ref{hyp:eq_lactate} and standard regularity results (see, e.g., \cite{Lions61}), we also have that
     \begin{equation}\label{eq:schauder_sigma_estimate}
         \norm{\sigma}_{H^1(\Vs') \cap L^{\infty}(\Hs) \cap L^2(\Vs)}\leq C,
     \end{equation}
    for a positive constant $C$ that does not depend on $\lambda$, $(\overline{\sigma},\overline{z})$ and $\varphi$. Now we want to prove that there exists a positive constant $M$ that depends only on $M_0$, $J^*$ and $T$ such that
    \begin{equation}\label{eq:schauder_sigma_M}
        0 \leq \sigma \leq M
    \end{equation}
    almost everywhere in $Q$.
    Multiplying the first equation in \eqref{eq:schauder_sigma} with $-\sigma^-$, integrating over $\Omega$ and employing the boundary condition, we get
    \begin{align*}
        \frac{1}{2} &\frac{\text{d}}{\dt} \into |\sigma^-|^2 \dx\\
        &\leq \frac{1}{2} \frac{\text{d}}{\dt} \into |\sigma^-|^2 \dx + \into |\nabla \sigma^-|^2 \dx + \intg |\sigma^-|^2 \dS + \into \frac{k_1(\varphi,\overline{z})|\sigma^-|^2}{k_2(\varphi,\overline{z}) + |\overline{\sigma}|} \dx\\
        &= - \into J(\varphi,\overline{z}) \sigma^- \dx - \intg \sigma_{\Gamma}\sigma^- \dS \leq 0. 
    \end{align*}
    Integrating in time over $(0,t)$, we have
    \begin{equation*}
        \frac{1}{2}\into|\sigma^-|^2 \dt\leq \frac{1}{2}\into|\sigma_0^-|^2 \dt = 0,
    \end{equation*}
    where the last equality stands because of Hypothesis \ref{hyp:initial_conditions}. As a consequence, $\sigma \geq 0$ a.e. in $Q$. Finally, we employ a standard Moser--Alikakos technique to prove that $\sigma \leq M$ for a certain $M>0$ yet to be found. We multiply the first equation in \eqref{eq:schauder_sigma} by $q\sigma^{q-1}$ with $q>2$ and integrating in space over $\Omega$. Notice that, in order to be sure that all the integrals are well-defined, one should introduce a truncation of $\sigma$,
    \begin{equation*}
        \sigma_k \coloneq \begin{cases}
            \sigma & \text{ if } \sigma \leq k\\
            k & \text{otherwise,}
        \end{cases}
    \end{equation*}
    for $k \in \NN$, multiply the previous equation by $q(\sigma_k)^{q-1}$ and proceed as we will do. In the end, having obtained an estimate that does not depend on $k$, one should pass to the limit as $k \to +\infty$ and recover the thesis. We will not do it in this rigorous way to avoid overloading the notation and we proceed formally testing the first equation in \eqref{eq:schauder_sigma} by $q \sigma^{q-1}$, obtaining
    \begin{align*}
        \frac{\text{d}}{\dt}& \into \sigma^q \dx + q(q-1)\into \sigma^{q-2}|\nabla \sigma|^2 \dx + q \intg \sigma^q \dS + q \into \frac{k_1(\varphi,\overline{z})\sigma^q}{k_2(\varphi,\overline{z})+ |\overline{\sigma}|} \dx \\
        &= q \intg \sigmag\sigma^{q-1} \dS + q \into J(\varphi,\overline{z}) \sigma^{q-1} \dx \leq q M_0 \intg \sigma^{q-1} \dS + q J^* \into \sigma^{q-1} \dx\\
        & \leq (q-1) \intg \sigma^q \dS + (M_0)^q |\Gamma| + (q-1) \into \sigma^q \dx + (J^*)^q |\Omega|
    \end{align*}
    having used the boundedness of $\sigmag$ and $J$ from Hypotheses \ref{hyp:boundary_conditions} and \ref{hyp:eq_lactate} and then the Young inequality with exponents $q/(q-1)$ and $q$. Doing the obvious simplification in the previous inequality and employing the fact that $\sigma$ is non-negative, we obtain:
    \begin{align*}
        \frac{\text{d}}{\dt}& \into \sigma^q \dx\\
        &\leq \frac{\text{d}}{\dt} \into \sigma^q \dx + q(q-1)\into \sigma^{q-2}|\nabla \sigma|^2 \dx +  \intg \sigma^q \dS + q \into \frac{k_1(\varphi,\overline{z})\sigma^q}{k_2(\varphi,\overline{z})+ |\overline{\sigma}|} \dx\\
        & \leq (M_0)^q |\Gamma| + (q-1) \into \sigma^q \dx + (J^*)^q |\Omega|.
    \end{align*}
    Integrating with respect to time over $(0,t)$, knowing that by Hypothesis \ref{hyp:initial_conditions} $\sigma_0 \leq M_0$, we find
    \begin{align*}
        \into \sigma^q(t) \dx &\leq \into \sigma_0^q \dx + (M_0)^q |\Gamma| T + (J^*)^q |\Omega|T + (q-1) \intto \sigma^q \dx \dt\\
        &\leq (M_0)^q (|\Omega| + |\Gamma|T) + (J^*)^q|\Omega|T +  (q-1) \intto \sigma^q \dx \dt,
    \end{align*}
    from which we deduce
    \begin{equation*}
        \norm{\sigma(t)}_{L^q} \leq \left[ M_0(|\Omega| + |\Gamma|T)^{\frac{1}{q}} + J^*|\Omega|^{\frac{1}{q}}T^{\frac{1}{q}} \right] e^{\frac{q-1}{q}T} \eqcolon C_q
    \end{equation*}
    using Gronwall inequality and then taking the $q^{th}$-root of both sides of the inequality. Passing to the limit as $q \to +\infty$, observing that by easy calculations $C_q \to M_0 e^T$ or $C_q \to J^*e^T$, we have
    \begin{equation*}
        \norm{\sigma(t)}_{L^\infty} \leq M \coloneq \max\{M_0,J^*\}e^T
    \end{equation*}
    for a.e. $t \in (0,T)$.\\
    
    \noindent \textit{Step 3.} Starting from $\varphi$ and $\overline{z}$, we find $\uu \in W^{1,\infty}(0,T;\Vs_0)$ as the unique solution of 
    \begin{equation}\label{eq:schauder_u}
        \begin{cases}
            \displaystyle \into \left[\A(\varphi,\overline{z})\epsilonut + \B(\varphi,\overline{z})\epsilonu\right] : \epsilonv \dx  = \into \ff \cdot \vv \dx & \forall \vv \in \Vs_0,\\
            \uu(0)=\uu_0 & \text{ a.e. in } \Omega.
        \end{cases}
    \end{equation}
    To do so, we proceed by time discretization. We consider a uniform partition of $[0,T]$ with  time step $\tau > 0$ and equidistant nodes $0= t_0 < t_1 < \dots < t_{K_{\tau}}=T$. We also introduce the notation: 
    \begin{equation*}
        I_{\tau}^k \coloneqq \begin{cases}
            [0,\tau] &\text{ if } k=1,\\
            (t_{\tau}^{k-1},\ t_{\tau}^k] &\text{ if } k = 2 \dots K_{\tau}.
        \end{cases}
    \end{equation*}
    We approximate $\varphi$, $\overline{z}$, and $f$ with their local means, i.e., we define
    \begin{equation*}
        \fk \coloneqq \frac{1}{\tau}\intk \ff\dd s, \qquad  \phik \coloneqq \frac{1}{\tau}\intk \varphi\dd s,\qquad  \zk \coloneqq \frac{1}{\tau}\intk \overline{z} \dd s,  
    \end{equation*}
    for every $k=1, \dots, K_{\tau}$. To keep the notation short, we also introduce
    \begin{equation*}
        \Ak \coloneqq \A(\phik,\zk), \qquad \Bk \coloneqq \B(\phik,\zk). 
    \end{equation*}
    \begin{remark}
    It is obvious that, since $\varphi, \overline{z} \in L^{\infty}(0,T;\Vs)$ and  $\ff \in L^{\infty}(0,T;\Hs)$, then 
    $\phik, \zk \in \Vs$ and $\fk \in \Hs$ with
    \begin{equation} \label{eq:estimate_dataK}
        \norm{\phik}_{\Vs} \leq \norm{\varphi}_{L^{\infty}(\Vs)},\quad \norm{\zk}_{\Vs} \leq \norm{\overline{z}}_{L^{\infty}(\Vs)},\quad
        \norm{\fk}_{\Hs} \leq \norm{f}_{L^{\infty}(\Hs)},
    \end{equation}
    for every $k=1, \dots, K_{\tau}$.
    \end{remark}
    \noindent Starting from $\uu^0_{\tau} = \uu_0 \in \Vs_0$, we solve recursively the following time-discrete problem:
    \begin{equation}\label{eq:discrete_problem_u}
        \begin{cases}
            \text{Given } \ukm \in \Vs_0, \text{ find } \uk \in \Vs_0 \text{ s.t. for all } \vv \in \Vs_0\\
            \displaystyle \into \Ak \varepsilon\bigg(\frac{\uk-\ukm}{\tau}\bigg): \epsilonv \dx  + \into \Bk \epsilonuk : \epsilonv \dx = \into \fk \cdot \vv \dx.
        \end{cases}
    \end{equation}
    Since the equation in \eqref{eq:discrete_problem_u} can be rewritten as
    \begin{equation}\label{eq:discrete_problem_u_rewritten}
        \into (\Ak + \tau \Bk) \epsilonuk : \epsilonv \dx = \into \tau \fk \cdot \vv + \Ak \epsilonukm : \epsilonv \dx  
    \end{equation}
    where $\Ak + \tau \Bk$ is strictly positive definite and the right-hand side yields a linear bounded functional on $\Vs_0'$, the existence follows from Lax--Milgram Theorem. Now our aim is to pass to the limit in the discrete equation, recovering a solution to the original problem. For the sake of brevity, we introduce the shorter notation
    \begin{equation*}
        \vk \coloneqq \frac{\uk -\ukm}{\tau}
    \end{equation*}
    to denote the discrete velocity. Taking $\vk$ as a test function in equation \eqref{eq:discrete_problem_u}, it is easy to prove that
    \begin{equation}\label{eq:estimate_discrete_u_1}
        \norm{\vk}_{\Vs_0} \leq C\left(\norm{\ff}_{L^{\infty}(\Hs)} + \norm{\uu_0}_{\Vs_0} \right), 
    \end{equation}
    where the constant $C$ depends on $T$ but non on $k$, $\lambda$, $\phik$, and $\zk$.
    As a consequence, since
    \begin{equation*}
        \uk = \tau \sum_{j=1}^k \vj + \uu_0,
    \end{equation*}
    we also have that 
    \begin{equation}\label{eq:estimate_discrete_u_2}
        \norm{\uk}_{\Vs_0} \leq C
    \end{equation}
    where, again $C$ depends on $\ff$, $\uu_0$ and $T$ but not on $k$, $\lambda$, $\phik$, and $\zk$.
    Given any sequence of scalar, vector-valued or tensor-value functions $\{w_{\tau}^k\}_{k=0}^{K_{\tau}}$ defined over $\Omega$, we introduce the piecewise constant interpolations $w_{\tau}$ and the piecewise linear interpolation $\widehat{w}_{\tau}$ over the time interval $[0,T]$ as
    \begin{equation*}
        w_{\tau}(t)\coloneqq w_{\tau}^k,\qquad
        \widehat{w}_{\tau}(t) \coloneqq \frac{t - t_{\tau}^{k-1}}{\tau} w_{\tau}^k + \frac{t_{\tau}^k -t}{\tau} w_{\tau}^{k-1}
    \end{equation*}
    for every $t \in I_{\tau}^k$. With this new notation, notice that $\partial_t \uti = \vt$ and  estimates \eqref{eq:estimate_discrete_u_1}, \eqref{eq:estimate_discrete_u_2} trivially lead to
    \begin{equation}\label{eq:estimate_discrete_u_3}
        \norm{\uti}_{W^{1,\infty}(\Vs_0)} + \norm{\ut}_{L^{\infty}(\Vs_0)}  \leq C.
    \end{equation}
    By standard compactness results, we deduce that there exists a $\uu \in W^{1,\infty}(0,T;\Vs_0)$ such that 
    \begin{alignat}{2}
        & \uti \to \uu \qquad \text{weakly-}\ast &&\qquad\text{in }  W^{1,\infty}(0,T;\Vs_0),\\
        & \ut \to \uu \qquad \text{weakly-}\ast &&\qquad \text{in }  L^{\infty}(0,T;\Vs_0).
    \end{alignat}
    Moreover, by their definition as time local means value, it holds true that
    \begin{equation}
        \ft \to \ff \qquad \text{strongly}\qquad  \text{in } L^2(0,T;\Hs)
    \end{equation}
    and that
    \begin{equation}
        \phit \to \varphi, \,\, \zt \to \overline{z} \qquad \text{a.e.}\qquad \text{in } Q.
    \end{equation}
   Recalling that $\A$ and $\B$ are continuous and bounded by Hypothesis \ref{hyp:eq_displacement}, it follows that
   \begin{equation}
       \At \to \A(\varphi,\overline{z}), \,\, \Bt \to \B(\varphi,\overline{z}) \qquad \text{ strongly} \qquad \text{in } L^2(0,T;\Hs)
   \end{equation}
    by the Dominated Convergence Theorem. These convergences we proved are enough to pass to the limit in the equation
    \begin{equation*}
         \into \At \varepsilon(\vt): \epsilonv \dx  + \into \Bt \varepsilon(\ut) : \epsilonv \dx = \into \ft \cdot \vv \dx
    \end{equation*}
    for every $\vv \in \Vs_0$ and a.e. $t \in (0,T)$, showing that $\uu$ is a solution to the original system. Notice that from lower semi-continuity of the norm with respect to weak-$\ast$ convergence and estimate \eqref{eq:estimate_discrete_u_3}, we also have that
    \begin{equation}\label{eq:schauder_u_estimate-bis}
        \norm{\uu}_{W^{1,\infty}(\Vs_0)} \leq C,
    \end{equation}
    where $C$ depends on $T$, $\ff$, and $\uu_0$ but it is independent of $\lambda$ and $(\varphi,\overline{z})$.
    Finally, we should prove that the solution is unique, but this comes easily from the fact that the equation is linear in $\uu$, $\A$ is strictly positive definite, and $\B$ is bounded.\\
    
    \noindent \textit{Step 4.} Starting from $\varphi$ and $\uu$, we find $z \in H^1(0,T;\Hs) \cap L^{\infty}(0,T;\Vs) \cap L^2(0,T;\Ws)$ as the unique solution of the semilinear parabolic equation with Lipschitz continuous nonlinearity
    \begin{equation}\label{eq:schauder_z}
        \begin{cases}
            \partial_t z -\Delta z + \beta_{\lambda}(z) + \pi(z) = w - \Psi(\varphi,\epsilonu) & \text{in Q},\\
            \partial_{\nnu}z = 0 & \text{on } \Sigma,\\
            z(0)=z_0 &\text{in } \Omega.
        \end{cases}
    \end{equation}
    Notice that, thanks to Hypothesis \ref{hyp:w},  inequality \eqref{eq:ineq_psi}, and estimates \eqref{eq:schauder_phi_estimate}, \eqref{eq:schauder_u_estimate-bis}, $w-\Psi(\varphi,\epsilonu)$ is uniformly bounded in $L^{\infty}(0,T;\Hs)$. Now we want to prove that there exists a positive constant $C$ (which does not depend on $\lambda$ and $(\overline{\sigma},\overline{z})$) such that
    \begin{equation}\label{eq:schauder_z_estimate}
        \norm{z}_{H^1(\Hs) \cap L^{\infty}(\Vs) \cap L^2(\Ws) } \leq C.
    \end{equation}
    Testing the first equation in \eqref{eq:schauder_z} by $\partial_t z$ and integrating over $\Omega$, we have
    \begin{align*}
        \into &|\partial_t z|^2 \dx + \frac{1}{2} \frac{\text{d}}{\dt} \into |\nabla z|^2 \dx + \frac{\text{d}}{\dt} \into \widehat{\beta}_{\lambda}(z) \dx \\
        &= - \into \pi(z)\partial_t z\dx + \into \left[w - \Psi(\varphi,\epsilonu) \right]\partial_t z\dx\\
        &\leq C \into \left( |z| + 1 + |w| + |\varphi| + |\epsilonu| +  |\hat{\Psi}|\right) |\partial_t z| \dx\\
        &\leq \frac{1}{2}\into |\partial_t z|^2 \dx + C \into \left(|z|^2 + 1 + |w|^2 + |\varphi|^2 + |\epsilonu|^2 + |\hat{\Psi}|^2 \right) \dx\\
        &\leq \frac{1}{2}\into |\partial_t z|^2 \dx + C \into |z_0|^2 \dx + C\intto |\partial_t z|^2 \dx \ds\\
        &\quad + C \into \left( 1 + |w|^2 + |\varphi|^2 + |\epsilonu|^2 + |\hat{\Psi}|^2 \right) \dx,
    \end{align*}
    where we have used the fact that $\pi$ is  Lipschitz continuous by Hypothesis \ref{hyp:pi}, inequality \eqref{eq:ineq_psi} (cf. Hypothesis \ref{hyp:psi}), applying Young inequality and the fact that
    \begin{equation*}
        z(t)= z_0 + \intt \partial_t z \ds.
    \end{equation*}
    Integrating in time over $(0,\tau)$, we obtain
    \begin{align*}
        \frac{1}{2} &\inttauo |\partial_t z|^2 \dx \!\dt + \frac{1}{2} \into |\nabla z|^2 \dx + \into \widehat{\beta}_{\lambda}(z) \dx\\
        &\leq \frac{1}{2} \into |\nabla z_0|^2 \dx + \into \widehat{\beta}_{\lambda}(z_0) \dx + C \into |z_0|^2 \dx  + C\inttau \! \! \! \intto |\partial_t z|^2 \dx \! \ds  \!\dt\\
        &\quad + C \inttauo  \left(1 + |w|^2 + |\varphi|^2 + |\epsilonu|^2 + |\hat{\Psi}|^2 \right) \dx \!\dt\\
        &\leq C_0 + C + C\inttau \! \! \! \intto |\partial_t z|^2 \dx \! \ds  \!\dt,
    \end{align*}
    where we have used the fact that $\widehat{\beta}_{\lambda} \leq \widehat{\beta}$ by definition of Yosida approximation (cf. \cite{brezis1973}). Applying Gronwall inequality and the fact that $z$ is bounded, we obtain
    \begin{equation*}
        \norm{z}_{H^1(\Hs) \cap L^{\infty}(\Vs)} \leq C,
    \end{equation*}
    where $C>0$ does not depend on $(\overline{\sigma},\overline{z})$. By comparison in the first equation in \eqref{eq:schauder_z}, we have 
    \begin{equation*}
        \begin{split}
            \norm{-\Delta z + \beta_{\lambda}(z)}_{L^2(\Hs)} &= \norm{- \partial_t z - \pi(z) + w -\Psi(\varphi,\epsilonu)}_{L^2(\Hs)}\\
            &\leq C \left(\norm{ z }_{H^1(\Hs)} + 1 +  \norm{w- \Psi(\varphi,\epsilonu)}_{L^{\infty}(\Hs)}\right)\leq C
        \end{split}
    \end{equation*}
    where $C$ does not depend on $\lambda$ and $(\overline{\sigma},\overline{z})$.
    On the other hand, we observe that
    \begin{equation*}
        \begin{split}
             \norm{-\Delta z + \beta_{\lambda}(z)}_{L^2(\Hs)}^2 &= \norm{-\Delta z}_{L^2(\Hs)}^2 + \norm{\beta_{\lambda}(z)}_{L^2(\Hs)}^2 + 2\intto -\Delta z \,\beta_{\lambda}(z) \dx \ds\\
            &= \norm{-\Delta z}_{L^2(\Hs)}^2 + \norm{\beta_{\lambda}(z)}_{L^2(\Hs)}^2 + 2\intto  \,\beta_{\lambda}'(z) |\nabla z|^2 \dx \ds\\
            & \geq \norm{-\Delta z}_{L^2(\Hs)}^2 + \norm{\beta_{\lambda}(z)}_{L^2(\Hs)}^2,
        \end{split}
    \end{equation*}
    where the inequality holds because $\beta_{\lambda}'$ is monotone and Lipschitz continuous (so it is a.e. differentiable with non-negative derivative). Putting together the previous two inequalities, we have proved 
    \begin{equation*}
        \norm{-\Delta z}_{L^2(\Hs)} + \norm{\beta_{\lambda}(z)}_{L^2(\Hs)}\leq C
    \end{equation*}
    and, as a consequence, we finally obtain the estimate \eqref{eq:schauder_z_estimate}.\\

    \noindent In the previous steps, we have built an operator $\gamma: \mathcal{X} \to \mathcal{X}$ such that $\gamma(\overline{\sigma},\overline{z}) \coloneq (\sigma,z)$. 
    From what we have already proved, it is straightforward that
    \begin{center}
        $\gamma$ is well-defined,
    \end{center}
    because each of the problems \eqref{eq:schauder_phi}, \eqref{eq:schauder_sigma}, \eqref{eq:schauder_u}, \eqref{eq:schauder_z} is well-posed, and that
    \begin{center}
        $\gamma(\mathcal{X})$ is a compact subset of $\mathcal{X}$,
    \end{center}
    because the following compact embeddings hold
    \begin{alignat*}{2}
        &H^1(0,T;V') \cap L^{\infty}(0,T;\Hs) \cap L^2(0,T;\Vs)&&\hookrightarrow \hookrightarrow \,\,L^2(0,T;\Hs),\\ 
        & H^1(0,T;\Hs) \cap L^{\infty}(0,T;V)\cap L^2(0,T;\Ws) &&\hookrightarrow \hookrightarrow \,\,L^2(0,T;L^{\infty})
    \end{alignat*}
    by Aubin--Lions Theorem (see \cite[Section 8, Corollary 4]{Simon_86}). To apply the Schauder fixed point Theorem, it remains to prove that
    \begin{center}
        $\gamma$ is continuous with respect to the norm $\norm{\cdot}_{\mathcal{X}}$.
    \end{center}
    Thus, given a sequence $(\overline{\sigma}_k,\overline{z}_k)$ strongly converging to $(\overline{\sigma},\overline{z})$ in $\mathcal{X}$, i.e. such that
    \begin{alignat}{2}
        \overline{\sigma}_k \to \overline{\sigma} &\qquad \text{strongly}&&\qquad \text{in }L^2(0,T;\Hs) \label{eq:schauder_osigma_strong},\\
        \overline{z}_k \to \overline{z} &\qquad \text{strongly}&&\qquad \text{in }L^{2}(0,T;L^{\infty}) \label{eq:schauder_oz_strong},
    \end{alignat}
    we aim to verify that $(\sigma_k,z_k) \to (\sigma,z)$ strongly in $\mathcal{X}$. By the uniform estimates \eqref{eq:schauder_sigma_estimate}, \eqref{eq:schauder_z_estimate} and standard compactness results, we know that there exists a pair $(\rho,\zeta)$ such that, along a subsequence that we do not relabel, 
    \begin{alignat}{2}
        \sigma_k \to \rho &\qquad \text{weakly-}\ast&& \qquad\text{in }H^1(0,T;V') \cap L^{\infty}(0,T;\Hs) \cap L^2(0,T;\Vs),\\
        &\qquad \text{strongly}&& \qquad \text{in } L^2(0,T;\Hs),\label{eq:schauder_sigma_strong}\\
         &\qquad \text{a.e.}&& \qquad\text{in } Q,\label{eq:schauder_sigma_ae}\\
        z_k \to \zeta &\qquad \text{weakly-}\ast&& \qquad \text{in } H^1(0,T;H) \cap L^{\infty}(0,T;\Vs) \cap L^2(0,T;\Ws),\label{eq:schauder_z_weak}\\
        &\qquad \text{strongly}&& \qquad\text{in } L^2(0,T;L^{\infty}),\label{eq:schauder_z_strong}\\
        &\qquad \text{a.e.}&& \qquad\text{in } Q. \label{eq:schauder_z_ae}
    \end{alignat}
    The proof is complete if we show that $(\rho,\zeta)=(\sigma,z)$, i.e. that $\rho$ (resp. $\zeta$) is the solution of problem \eqref{eq:schauder_sigma} (resp. \eqref{eq:schauder_z}) corresponding to the datum $(\overline{\sigma},\overline{z})$. In fact, at this point, since every subsequence admits a sub-subsequence that converges to the same limit, the convergences \eqref{eq:schauder_sigma_strong} and \eqref{eq:schauder_z_strong} hold for the whole sequence. To do so, we pass to the limit as $k \to +\infty$ in the systems \eqref{eq:schauder_phi}, \eqref{eq:schauder_sigma}, \eqref{eq:schauder_u}, \eqref{eq:schauder_z} with initial datum $(\overline{\sigma}_k,\overline{z}_k)$ that, by definition of $\gamma$, are satisfied by $\varphi_k$, $\sigma_k$, $\uu_k$ and $z_k$.\\

    \noindent\textit{Step I.} We know that $\varphi_k$ satisfies 
    \begin{equation*}
        \partial_t \varphi_k -\Delta \varphi_k = p(\overline{\sigma}_k, \overline{z}_k) \alpha(\varphi_k) - \varphi_k g(\overline{\sigma}_k, \overline{z}_k)
    \end{equation*}
    in $L^2(0,T;\Hs)$ and we aim to pass to the weak limit in this equation as $k \to +\infty$. From the uniform estimate \eqref{eq:schauder_phi_estimate} and standard compactness result, we assert that it exists a $\phi$ such as, along a further subsequence that we do not relabel,
    \begin{alignat}{2}
        \varphi_k \to \phi &\qquad \text{weakly-}\ast&& \qquad\text{in } H^1(0,T;H) \cap L^{\infty}(0,T;\Vs) \cap L^2(0,T;\Ws),\\
        &\qquad \text{strongly}&& \qquad \text{in } C^0([0,T];H^{1-\epsilon})\cap L^2(0,T;H^{2-\epsilon}) \text{ for } 0<\epsilon<1, \label{eq:schauder_phi_strong}\\
         &\qquad \text{a.e}&& \qquad \text{in } Q. \label{eq:schauder_phi_ae}
    \end{alignat}
    Moreover, since $\overline{\sigma}_k \to \overline{\sigma}$ strongly in $L^2(0,T;\Hs)$ and $\overline{z}_k \to \overline{z}$ strongly in $L^2(0,T;L^{\infty})$, it holds 
    \begin{equation}\label{eq:schauder_z_sigma_ae}
        \overline{\sigma}_k \to \overline{\sigma}, \, \, \overline{z}_k \to \overline{z} \qquad \text{a.e} \qquad \text{in }  Q,
    \end{equation}
    possibly extracting a further subsequence. 
    This implies that we can pass to the limit in the equation: the terms on the left-hand side are trivial and the terms on the right-hand side converge strongly in $L^2(0,T;\Hs)$ because we can apply the Lebesgue Convergence Theorem. In fact $p$, $\alpha$ and $g$ are continuous and uniformly bounded (thanks to Hypothesis \ref{hyp:eq_phi} and by definition of $\alpha$ in \eqref{eq:alpha}) and their arguments converge a.e. (thanks to \eqref{eq:schauder_phi_ae}, \eqref{eq:schauder_z_sigma_ae}). 
    So, $\phi$ actually solves system \eqref{eq:schauder_phi} with data $(\overline{\sigma},\overline{z})$: by uniqueness of the solution, we may identify $\phi$ with $\varphi$.\\

    \noindent \textit{Step II}. The passage to the limit in the equation
    \begin{equation*}
        \partial_t \sigma_k -\Delta \sigma_k + \frac{k_1(\varphi_k,\overline{z}_k)}{k_2(\varphi_k,\overline{z}_k)+ |\overline{\sigma}_k|} = J(\varphi_k,\overline{z}_k)
    \end{equation*}
    is quite similar to the previous one so we will not do it in detail. It allows us to assert that $\rho=\sigma$ and satisfies \eqref{eq:schauder_sigma} for the data $\varphi$ and $(\overline{\sigma},\overline{z})$.\\

    \noindent \textit{Step III.}  Regarding $\uu_k$, the solution of \eqref{eq:schauder_u} from the data  $(\varphi_k,\overline{z}_k)$, it exists a $\oomega$ such that, along a non-relabeled subsequence,
    \begin{align} \label{eq:schauder_u_weak}
        \uu_k \to \oomega \qquad \text{weakly-}\ast \qquad \text{in } W^{1,\infty}(0,T;\Vs_0)
    \end{align}
    from \eqref{eq:schauder_u_estimate-bis} and standard compactness results. Thanks to this convergence, we are able to pass to the limit in the equation
    \begin{equation*}
        \into \A(\varphi_k,\overline{z}_k)\varepsilon(\partial_t\uu_k) : \epsilonv \dx + \into \B(\varphi_k,\overline{z}_k)\varepsilon(\uu_k):\epsilonv \dx = \into \ff \cdot \vv \dx 
    \end{equation*}
    for every fixed $\vv\in \Vs_0$. In fact, $\mathcal{A}$ and $\mathcal{B}$ are continuous from Hypothesis \ref{hyp:eq_displacement} and $\varphi_k$, $\overline{z}_k$ converge a.e. to $\varphi$, $\overline{z}$ employing \eqref{eq:schauder_phi_ae} and \eqref{eq:schauder_z_sigma_ae}, which implies that
    \begin{equation*}
        \mathcal{A}(\varphi_k,\overline{z}_k) \to \mathcal{A}(\varphi,\overline{z}), \qquad \mathcal{B}(\varphi_k,\overline{z}_k) \to \mathcal{B}(\varphi,\overline{z})
    \end{equation*}
    a.e. in $Q$. Moreover, $\mathcal{A}$ and $\mathcal{B}$ are bounded by Hypothesis \ref{hyp:eq_displacement}. Thus, \eqref{eq:schauder_u_weak} is enough to pass to the limit in the previous equation, and we may identify $\oomega$ with $\uu$ since it is the unique solution of the system \eqref{eq:schauder_u} with initial data $\varphi$ and $\overline{z}$.\\
    
    \noindent \textit{Step IV.} The passage to the limit in the equation
    \begin{equation*}
        \partial_t z_k -\Delta z_k + \beta_{\lambda}(z_k) + \pi(z_k) = w - \Psi(\varphi_k,\varepsilon(\uu_k)),
    \end{equation*}
    is obvious in all the terms with the exception of $\Psi(\varphi_k, \varepsilon(\uu_k))$, because of the convergence \eqref{eq:schauder_z_weak}, \eqref{eq:schauder_z_strong} and the Lipschitz continuity of $\beta_{\lambda}$ and $\pi$. However, we need to prove stronger convergence for $\varepsilon(\uu_k)$ to treat the last one. Thus, we take the difference of the equations satisfied by $\uu_k$ and $\uu_l$ and test it with $\partial_t \vv$, where $\vv \coloneq \uu_k - \uu_l$. After summing and subtracting some terms, we obtain:
    \begin{multline*}
        \into \A(\varphi_k,\overline{z}_k)\epsilonvt : \epsilonvt \dx = -\into \left[\A(\varphi_k,\overline{z}_k)-\A(\varphi_l,\overline{z}_l)\right]\epsilonult : \epsilonvt \dx\\
        -  \into \B(\varphi_k,\overline{z}_k)\epsilonv:\epsilonvt \dx - \into \left[\B(\varphi_k,\overline{z}_k) - \B(\varphi_l,\overline{z}_l)\right]\epsilonul:\epsilonvt \dx.
    \end{multline*}
    Exploiting strictly positive definiteness of  $\A$, Lipschitz continuity of $\A$ and $\B$, boundedness of $\B$ from Hypothesis \ref{hyp:eq_displacement}, and  using H\"older and Young inequalities, for a.e. $t \in (0,T)$ we get
    \begin{align*}
       C_{{\A}_*}\into &|\epsilonvt|^2 \dx \leq  \eta \into |\epsilonvt|^2 \dx + C_{\eta} \into |\epsilonv|^2 \dx\\
        &+ C_{\eta} \left(\norm{\varphi_k-\varphi_l}_{L^{\infty}(\Omega)}^2+\norm{\overline{z}_k-\overline{z}_l}_{L^{\infty}(\Omega)}^2\right)\into \left(|\epsilonult|^2 + |\epsilonul|^2 \right) \dx
    \end{align*}
    for a positive $\eta$ small enough. Then we integrate in time over the interval $(0,t)$ and move to the left-hand side the term multiplied by the small coefficient $\eta$. Moreover, from \eqref{eq:schauder_u_estimate-bis}, we know that $\norm{\epsilonul}_{\Hs}$ and $\norm{\epsilonult}_{\Hs}$ are uniformly bounded in time and the following equality holds
    \begin{equation*}
        \varepsilon(\vv(s)) = \varepsilon(\vv(0)) + \ints\epsilonvt \dtau = \ints \epsilonvt \dtau,
    \end{equation*}
    where we have used the fact that $\uu_k(0)=\uu_l(0)=\uu_0$.
    Putting all these elements together, the previous inequality becomes
    \begin{multline*}
        \intto |\epsilonvt|^2 \ds \leq C \bigg[ 
       \intt \left( \ints \!\! \into |\epsilonvt|^2 \dx  \!\dtau \! \right) \!  \ds\\
       + \norm{\varphi_k-\varphi_l}_{L^2(L^{\infty})}^2 +\norm{\overline{z}_k-\overline{z}_l}_{L^2(L^{\infty})}^2\bigg].
    \end{multline*}
    Applying Gronwall inequality, we obtain
    \begin{align*}
        \norm{\epsilonvt}_{L^2(\Hs)}^2 \leq C e^{CT} \left[ 
       \norm{\varphi_k-\varphi_l}_{L^2(L^{\infty})}^2 +\norm{\overline{z}_k-\overline{z}_l}_{L^2(L^{\infty})}^2\right]\to 0
    \end{align*}
    as $k,l \to +\infty$ thanks to the strong convergences \eqref{eq:schauder_z_strong} and \eqref{eq:schauder_phi_strong} with $\epsilon$ small enough (so that the embedding $H^{2-\epsilon} \hookrightarrow \hookrightarrow C^0(\overline{\Omega})$ holds). This implies that also $\norm{\epsilonv}_{L^{\infty}(\Hs)}$ vanishes in the limit. So, $\{\uu_k\}$ is a Cauchy sequence in $H^1(0,T;V_0)$ and consequentially converges. Thus we have proved that
    \begin{equation}\label{eq:schauder_u_strong}
        \uu_k \to \uu \qquad \text{strongly} \qquad \text{in }  H^1(0,T;V_0),
    \end{equation}
    where we are able to identify the limit with $\uu$ because of the known convergence \eqref{eq:schauder_u_weak}. Now we can conclude the passage to the limit in the damage equation. In fact, $\Psi$ is Lipschitz continuous by Hypothesis \ref{hyp:psi}, $\varphi_k \to \varphi$ strongly in $L^2(0,T;\Hs)$ by \eqref{eq:schauder_phi_strong} and $\varepsilon(\uu_k) \to \epsilonu$ strongly in $L^2(0,T;\Hs)$ by \eqref{eq:schauder_u_strong}, so $\Psi(\varphi_k,\varepsilon(\uu_k)) \to \Psi(\varphi,\epsilonu)$ strongly in $L^2(0,T;\Hs)$. Finally, since $\zeta$ satisfies \eqref{eq:schauder_z} with input data $\varphi$ and $\uu$, we can do the identification $\zeta = z$.\\

    \noindent Applying the Schauder fixed point Theorem, it follows that there exists a $(\sigma_{\lambda},z_{\lambda})$ such that $(\sigma_{\lambda},z_{\lambda})=\gamma(\sigma_{\lambda},z_{\lambda})$. By construction of $\gamma$, we have proved the existence of a quadruplet $(\varphi_{\lambda},\sigma_{\lambda},\uu_{\lambda}, z_{\lambda})$ that is a weak solution of the approximate problem in the sense of \Cref{defn:weak_solution_approximate}. Moreover, this solution satisfies the uniform estimates we investigated throughout the proof since they did not depend on $\lambda$, so \eqref{eq:schauder_phil_estimate}--\eqref{eq:schauder_betal_estimate} hold. 
\end{proof}

\subsubsection{Conclusion of the proof of \Cref{thm:existence}}
\noindent Let us consider a sequence $\{(\varphi_{\lambda},\sigma_{\lambda},\uu_{\lambda},z_{\lambda})\}_{\lambda}$ of weak solutions of the approximate problem. Now we want to pass to the limit as $\lambda \to 0$. Employing the uniform estimates \eqref{eq:schauder_phil_estimate}, \eqref{eq:schauder_sigmal_estimate}, \eqref{eq:schauder_ul_estimate}, \eqref{eq:schauder_zl_estimate}, \eqref{eq:schauder_betal_estimate}, there exist a quadruplet $(\varphi,\sigma,\uu,z)$ and a $\xi$ such that
\begin{alignat}{2}
        \varphi_{\lambda} \to \varphi  &\qquad \text{weakly-}\ast&& \qquad\text{in }H^1(0,T;\Hs) \cap L^{\infty}(0,T;\Vs) \cap L^2(0,T;\Ws),\\
        \sigma_{\lambda}\to \sigma &\qquad \text{weakly-}\ast&& \qquad\text{in } H^1(0,T;V') \cap L^{\infty}(0,T;\Hs) \cap  L^2(0,T;\Vs),\\
        \uu_{\lambda} \to \uu &\qquad \text{weakly-}\ast&& \qquad\text{in }W^{1,\infty}(0,T;\Vs_0),\\
        z_{\lambda} \to z &\qquad \text{weakly-}\ast&& \qquad \text{in }H^1(0,T;\Hs) \cap L^{\infty}(0,T;\Vs) \cap L^2(0,T;\Ws),\label{eq:schauder_zl_weak}\\
         \beta_{\lambda}(z_{\lambda}) \to \xi &\qquad \text{weakly}&& \qquad \text{in } L^2(0,T;\Hs) \label{eq:schauder_betal_convergence}
    \end{alignat}
    along a subsequence of $\lambda$ that we do not relabel. The passage to the limit in the approximate system, can be performed exactly as in the proof of \Cref{prop:existence_lambda}, proving strong convergence where needed through standard compactness result and directly for the displacement equation. The only difference that needs further discussion is the Yosida approximation in the damage equation because we need to justify that $\xi \in \beta(z)$. From \eqref{eq:schauder_zl_estimate} and Aubin--Lions compact embedding
    \begin{equation*}
           H^1(0,T;\Hs) \cap L^2(0,T;\Ws) \hookrightarrow \hookrightarrow L^2(0,T;\Vs),
    \end{equation*}
    $z_{\lambda} \to z$ strongly in $L^2(0,T;\Hs)$ along a further non-relabeled subsequence. Thus, since $\beta$ is maximal monotone and $\beta_{\lambda}$ is its Yosida approximation, we exploit \cite[Proposition 1.1, p. 42]{barbu2010} and deduce that $\xi \in \beta(z)$ a.e. in $Q$. This concludes the proof of \Cref{thm:existence}.\\

\subsection{Proof of \Cref{thm:regularity}}
\label{sec:th2}

\noindent As we will comment further later, the prescribed regularity can be proved at the approximate level using standard regularity results. Then, the only thing to be shown is that such regularity passes to the limit and this will be done by proving some a priori estimates in the stronger norms we need.

\begin{lemma}
     Under the Hypothesis \textbf{(A)}-\textbf{(B)}, the solution to the approximate problem found in \Cref{prop:existence_lambda} enjoys the further regularity
    \begin{gather*}
        \varphi_{\lambda} \in H^1(0,T;\Vs) \cap L^{\infty}(0,T;\Ws) \cap L^2(0,T;H^3),\\
         \uu_{\lambda} \in W^{1,\infty}(0,T;\Wsz),\\
         z_{\lambda} \in  W^{1,\infty}(0,T;\Hs) \cap H^1(0,T;\Vs)  \cap L^{\infty}(0,T;\Ws).
    \end{gather*}
     Moreover, it satisfies
    \begin{align}
         \norm{\varphi_{\lambda}}_{H^1(\Vs)\cap L^{\infty}(\Ws) \cap L^2(H^3)} &\leq C,\label{eq:schauder_phil_reg_estimate}\\
         \norm{\uu_{\lambda}}_{W^{1,\infty}(\Wsz)}&\leq C,\label{eq:schauder_ul_reg_estimate}\\
         \norm{z_{\lambda}}_{H^1(\Vs) \cap L^{\infty}(\Ws)}&\leq C,\label{eq:schauder_zl_reg_estimate}\\
          \norm{\beta_{\lambda}(z_{\lambda})}_{ L^{\infty}(\Hs)}&\leq C,\label{eq:schauder_betal_reg_estimate}
    \end{align}
    for a positive constant $C$ depending only on the problem's data and not on $\lambda$.
\end{lemma}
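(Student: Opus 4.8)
The plan is to prove the four displayed bounds \emph{uniformly in $\lambda$}; the membership of $(\varphi_\lambda,\uu_\lambda,z_\lambda)$ in the stronger regularity classes will then follow from these bounds together with classical regularity theory for semilinear parabolic and linear elliptic problems, now available because under Hypotheses \textrm{(B)} the reaction coefficients, the sources and the initial data are more regular. As usual, the estimates below are formal and are made rigorous by performing them on the Faedo--Galerkin / time-discrete approximations already used in \Cref{prop:existence_lambda} and passing to the limit. The estimates for $\varphi_\lambda$ and for $z_\lambda$ do not rely on one another and are carried out first; the one for $\uu_\lambda$ is then deduced from the space regularity of $\varphi_\lambda$ and $z_\lambda$ just gained.

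\emph{The bound \eqref{eq:schauder_phil_reg_estimate}.} Since $0\le\varphi_\lambda\le N$ the truncation has disappeared and $\varphi_\lambda$ solves \eqref{eq:phi} a.e.\ in $Q$; set $F_\lambda := p(\sigma_\lambda,z_\lambda)\varphi_\lambda(1-\varphi_\lambda/N)-\varphi_\lambda\,g(\sigma_\lambda,z_\lambda)$ and test the equation with $-\Delta\partial_t\varphi_\lambda$. The no-flux condition kills all boundary terms and, after Young's inequality, leaves
\[
\tfrac12\norm{\nabla\partial_t\varphi_\lambda}_{\Hs}^2 + \tfrac12\tfrac{\mathrm d}{\mathrm d t}\norm{\Delta\varphi_\lambda}_{\Hs}^2 \le \tfrac12\norm{\nabla F_\lambda}_{\Hs}^2.
\]
By Hypothesis \ref{hyp:uniqueness_eq_phi}, $\nabla F_\lambda$ is a sum of terms of the form $(\text{bounded})\cdot\nabla\varphi_\lambda$ and $(\text{bounded})\cdot(\nabla\sigma_\lambda\ \text{or}\ \nabla z_\lambda)$, hence is bounded in $L^2(0,T;\Hs)$ by \eqref{eq:schauder_phil_estimate}--\eqref{eq:schauder_zl_estimate}. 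Integration in time, using $\varphi_0\in\Ws$ (Hypothesis \ref{hyp:uniqueness_initial_conditions}), gives $\partial_t\varphi_\lambda\in L^2(0,T;\Vs)$ and $\Delta\varphi_\lambda\in L^\infty(0,T;\Hs)$, whence $\varphi_\lambda\in L^\infty(0,T;\Ws)$ by elliptic regularity; reading $-\Delta\varphi_\lambda = F_\lambda-\partial_t\varphi_\lambda\in L^2(0,T;\Vs)$ and applying elliptic regularity once more yields $\varphi_\lambda\in L^2(0,T;H^3)$, all constants being $\lambda$-independent.

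\emph{The bounds \eqref{eq:schauder_zl_reg_estimate}--\eqref{eq:schauder_betal_reg_estimate}.} Differentiate \eqref{eq:damage_lambda} in time and test with $\partial_t z_\lambda$. Since $\beta_\lambda$ is monotone and Lipschitz, the term $\into\beta_\lambda'(z_\lambda)|\partial_t z_\lambda|^2\dx\ge 0$ is discarded; $\pi'$ is bounded by Hypothesis \ref{hyp:pi}; $\partial_t w\in L^2(0,T;\Hs)$ by Hypothesis \ref{hyp:uniqueness_w}; and although $\Psi$ is only Lipschitz, $t\mapsto\Psi(\varphi_\lambda,\varepsilon(\uu_\lambda))$ lies in $H^1(0,T;\Hs)$ with time-derivative bounded in $\Hs$ a.e.\ by $C_\Psi(\norm{\partial_t\varphi_\lambda}_{\Hs}+\norm{\varepsilon(\partial_t\uu_\lambda)}_{\Hs})\in L^2(0,T)$ (by \eqref{eq:schauder_phil_estimate}, \eqref{eq:schauder_ul_estimate}). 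The initial value $\partial_t z_\lambda(0)$ is estimated $\lambda$-independently from the equation at $t=0$, using $z_0\in\Ws$, $|\beta_\lambda(z_0)|\le|\beta^0(z_0)|\in\Hs$ (note that $z_0\in[0,1]$ a.e.\ since $\widehat\beta(z_0)\in L^1$), $w(0)\in\Hs$ and $\Psi(\varphi_0,\varepsilon(\uu_0))\in\Hs$ via \eqref{eq:ineq_psi} and Hypothesis \ref{hyp:uniqueness_initial_conditions}. A Grönwall argument yields $\partial_t z_\lambda\in L^\infty(0,T;\Hs)\cap L^2(0,T;\Vs)$. Finally, writing $-\Delta z_\lambda+\beta_\lambda(z_\lambda)=w-\Psi(\varphi_\lambda,\varepsilon(\uu_\lambda))-\partial_t z_\lambda-\pi(z_\lambda)$, squaring in $\Hs$ and using $\into(-\Delta z_\lambda)\beta_\lambda(z_\lambda)\dx=\into\beta_\lambda'(z_\lambda)|\nabla z_\lambda|^2\dx\ge 0$, one gets $\norm{\Delta z_\lambda}_{\Hs}^2+\norm{\beta_\lambda(z_\lambda)}_{\Hs}^2\le\norm{w-\Psi-\partial_t z_\lambda-\pi(z_\lambda)}_{\Hs}^2$, whose right-hand side is bounded in $L^\infty(0,T;\Hs)$; hence $z_\lambda\in L^\infty(0,T;\Ws)$ and \eqref{eq:schauder_betal_reg_estimate} follow.

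\emph{The bound \eqref{eq:schauder_ul_reg_estimate} --- the main obstacle.} Read \eqref{eq:displacement_lambda} at a.e.\ fixed $t$ as the elliptic system
\[
-\diver\big[\A(\varphi_\lambda(t),z_\lambda(t))\,\varepsilon(\partial_t\uu_\lambda(t))\big] = \ff(t)+\diver\big[\B(\varphi_\lambda(t),z_\lambda(t))\,\varepsilon(\uu_\lambda(t))\big].
\]
Its coefficient tensor is uniformly elliptic and, by the bounds just proved for $\varphi_\lambda,z_\lambda$ in $L^\infty(0,T;\Ws)$ and the embedding $\Ws\hookrightarrow W^{1,q}(\Omega)$ ($q=6$ if $n=3$, any $q<\infty$ if $n=2$), its space regularity is controlled uniformly in $t$ and $\lambda$; the elliptic regularity estimate (in the spirit of \Cref{lemma:H2_inequality}) then gives $\norm{\partial_t\uu_\lambda(t)}_{\Wsz}\le C\big(\norm{\ff(t)}_{\Hs}+\norm{\diver[\B(\varphi_\lambda(t),z_\lambda(t))\varepsilon(\uu_\lambda(t))]}_{\Hs}\big)$ with $C$ independent of $t$ and $\lambda$. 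Expanding the divergence, using $\norm{\nabla\B(\varphi_\lambda(t),z_\lambda(t))}_{L^6}\le C$ and $\norm{\varepsilon(\uu_\lambda(t))}_{L^3}\le C\norm{\uu_\lambda(t)}_{\Wsz}$, one arrives at $\norm{\partial_t\uu_\lambda(t)}_{\Wsz}\le C\big(1+\norm{\ff(t)}_{\Hs}+\norm{\uu_\lambda(t)}_{\Wsz}\big)$. Since $\uu_\lambda(t)=\uu_0+\intt\partial_t\uu_\lambda(s)\ds$ with $\uu_0\in\Wsz$ (Hypothesis \ref{hyp:uniqueness_initial_conditions}), the right-hand side is $\le C\big(1+\norm{\ff}_{L^\infty(0,T;\Hs)}\big)+C\intt\norm{\partial_t\uu_\lambda(s)}_{\Wsz}\ds$, and Grönwall's lemma closes the estimate into \eqref{eq:schauder_ul_reg_estimate} uniformly in $\lambda$. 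The genuine difficulty is precisely here: the coefficients of the elliptic operator acting on $\uu_\lambda$ inherit only the Sobolev regularity of $\varphi_\lambda$ and $z_\lambda$, so no smallness is available, and the bound is made $\lambda$-uniform only because $\norm{\uu_\lambda(t)}_{\Wsz}$ is reabsorbed into the time integral of $\partial_t\uu_\lambda$ before invoking Grönwall's lemma.
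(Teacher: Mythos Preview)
Your proof is correct and covers all the required bounds, but the route differs from the paper's in two of the three estimates.

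For $z_\lambda$, the paper tests \eqref{eq:damage_lambda} directly with $\partial_t(-\Delta z_\lambda+\beta_\lambda(z_\lambda))$ and then integrates the source terms by parts in time; this produces $\norm{\nabla\partial_t z_\lambda}_{L^2(\Hs)}$ and $\norm{-\Delta z_\lambda+\beta_\lambda(z_\lambda)}_{L^\infty(\Hs)}$ in one step, after a Gronwall argument. You instead differentiate the equation in time and test with $\partial_t z_\lambda$---the classical ``second energy estimate''---obtaining $\partial_t z_\lambda\in L^\infty(0,T;\Hs)\cap L^2(0,T;\Vs)$ first and then recovering the $L^\infty(0,T;\Hs)$ bound on $-\Delta z_\lambda+\beta_\lambda(z_\lambda)$ by comparison. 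Your approach is arguably more elementary and has the bonus of giving a $\lambda$-uniform bound on $\norm{\partial_t z_\lambda}_{L^\infty(\Hs)}$, matching the stated $W^{1,\infty}(0,T;\Hs)$ regularity; the paper obtains that regularity only via a $\lambda$-dependent parabolic result.

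For $\uu_\lambda$, the paper expands the divergence, multiplies by $-\diver\varepsilon(\partial_t\uu_\lambda)$, and absorbs the critical cross term $\norm{\varepsilon(\partial_t\uu_\lambda)}_{L^4}^2\bigl(\norm{\nabla\varphi_\lambda}_{L^4}^2+\norm{\nabla z_\lambda}_{L^4}^2\bigr)$ via Ehrling's lemma before Gronwall. You package the same computation as ``elliptic $H^2$ regularity for $-\diver[\A(\varphi_\lambda,z_\lambda)\varepsilon(\cdot)]$''. One caveat: \Cref{lemma:H2_inequality} as stated requires $W^{1,\infty}$ coefficients, whereas $\A(\varphi_\lambda,z_\lambda)$ is only uniformly in $W^{1,6}$ (for $n=3$) from the $L^\infty(0,T;\Ws)$ bounds just established. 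The extension of the elliptic estimate to $W^{1,p}$ coefficients with $p>n$ is standard---its proof is precisely the interpolation/absorption the paper carries out by hand---so the two arguments coincide under the hood; the paper's version is simply more self-contained given what is actually stated in \Cref{lemma:H2_inequality}.
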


\begin{proof}
    To simplify the notation, since here $\lambda$ is fixed, we will omit it.\\
    
    \noindent \textit{Estimate for }$\varphi$. The right-hand side of equation \eqref{eq:phi_lambda} is
    \begin{equation*}
        h \coloneq p(\sigma,z)\varphi \left(1-\frac{\varphi}{N}\right)  - \varphi g(\sigma,z).
    \end{equation*}
    Notice that we removed the truncation $\alpha$ because we have already proved that $0 \leq \varphi \leq N$ (cf. \eqref{eq:aprox_syst_phi_sigma_boundedness} in \Cref{prop:existence_lambda}). From well-known regularity theory for parabolic equations (see \cite{Dautray_Lions_92,Lions61}), since  $h \in L^2(\Vs)$, $\varphi$ enjoys the regularity declared in the statement and it holds 
    \begin{equation} \label{eq:estimate_reg_phi_partial}
        \norm{\varphi}_{H^1(\Vs)\cap L^{\infty}(\Ws) \cap L^2(H^3)} \leq C (\norm{h}_{L^2(\Vs)} + \norm{\varphi_0}_{\Ws}).
    \end{equation}
    We aim to bound $\norm{h}_{L^2(\Vs)}$. Since $p$ and $g$ are bounded by Hypothesis \ref{hyp:eq_phi}, it follows that
    \begin{equation*}
        \norm{h}_{L^2(\Hs)} \leq (p^* + g^*) N |\Omega|^{\frac{1}{2}}.
    \end{equation*}
    Moreover, we have that
    \begin{multline*}
        \nabla h =  (p_{,\sigma}\nabla\sigma + p_{,z}\nabla z) \varphi \left(1-\frac{\varphi}{N}\right) + p(\sigma,z)\left[\nabla \varphi \left(1-\frac{\varphi}{N}\right)-\varphi\frac{\nabla \varphi}{N}\right]\\
            - g(\sigma,z) \nabla \varphi - (g_{,\sigma}\nabla\sigma + g_{,z}\nabla z)\varphi.
    \end{multline*}
    Recalling that $\varphi$ and $z$ are bounded, $p$ and $g$ are bounded along with their partial derivatives by Hypotheses \ref{hyp:eq_phi} and \ref{hyp:uniqueness_eq_phi}, we have
    \begin{equation*}
        |\nabla h| \leq C (|\nabla \sigma| + |\nabla z| + |\nabla \varphi|). 
    \end{equation*}
    Thus, since we have already proved \eqref{eq:schauder_sigmal_estimate}, \eqref{eq:schauder_zl_estimate}, \eqref{eq:schauder_phil_estimate}, we get that $\norm{\nabla h}_{L^2(\Hs)}$ is uniformly bounded. Hence, from \eqref{eq:estimate_reg_phi_partial}, we obtain \eqref{eq:schauder_phil_reg_estimate}.\\

    \noindent \textit{Estimate for }$z$ \textit{and} $\beta_{\lambda}(z)$. Observing that the term
    \begin{equation*}
         w - \Psi(\varphi,\epsilonu) - \beta_{\lambda}(z) - \pi(z) 
    \end{equation*}
    belongs to $H^1(0,T;\Hs)$, by standard parabolic regularity results (see \cite{Dautray_Lions_92,Lions61}), $z$ has the desired regularity. Unfortunately, the estimate \eqref{eq:schauder_zl_reg_estimate} (which is independent of $\lambda$) can not be deduced as we have done for $\varphi$ because the Lipschitz constant of the Yosida approximation $\beta_{\lambda}$ depends on $\lambda$. To overcome this difficulty, we aim to test equation \eqref{eq:damage_lambda} by
    $\partial_t(-\Delta z + \beta_{\lambda}(z))$ and integrate over the time interval $(0,t)$. Notice that the following calculations are formal, since
    $\partial_t(-\Delta z + \beta_{\lambda}(z))$ does not possess the regularity $L^2(0,T;\Hs)$, which would be suitable for \Cref{eq:damage_lambda}. However, the same estimate can be performed rigorously at the discrete level in a Galerkin scheme, so we will not enter into technical details. We have:
    \begin{align*}
        \intto |\nabla&(\partial_t z)|^2 \dx\dtau + \intto \beta_{\lambda}'(z)|\partial_t z|^2 \dx\dtau\\
        &\quad + \frac{1}{2} \left[ \into |-\Delta z + \beta_{\lambda}(z)|^2 \dx - \into |-\Delta z_0 + \beta_{\lambda}(z_0)|^2 \dx \right]\\ 
        &= -\intto (\pi(z)+ \Psi(\varphi,\epsilonu) - w)\partial_t(-\Delta z + \beta_{\lambda}(z))\dx\dtau.
    \end{align*}
    Then we integrate by parts the term on the right-hand side with respect to time, obtaining:
    \begin{equation}\label{eq:estimate_reg_beta_partial}
        \begin{split}
            \intt&\into |\nabla(\partial_t z)|^2 \dx\dtau + \intto \beta_{\lambda}'(z)|\partial_t z|^2 \dx\dtau + \frac{1}{2} \into |-\Delta z + \beta_{\lambda}(z)|^2 \dx\\ 
            =&\into (\pi(z_0)+ \Psi(\varphi_0,\varepsilon(\uu_0)) - w(0))(-\Delta z_0 + \beta_{\lambda}(z_0)) \dx\\
            &- \into (\pi(z)+ \Psi(\varphi,\epsilonu) - w)(-\Delta z + \beta_{\lambda}(z)) \dx\\
            &+ \intto  \left(\pi'(z)\partial_t z + \partial_t \Psi(\varphi,\epsilonu) - \partial_t w\right)(-\Delta z + \beta_{\lambda}(z)) \dx\dtau\\
            &+ \frac{1}{2} \into |-\Delta z_0 + \beta_{\lambda}(z_0)|^2 \dx\\
            \eqcolon& \, I_1 + I_2 + I_3 + I_4.
        \end{split}
    \end{equation}
    Regarding the left-hand side, since $\beta_{\lambda}'$ is non negative because $\beta_{\lambda}$ is monotone, we have
    \begin{equation}\label{eq:estimate_reg_beta_lh}
        \begin{split}
            &\intto |\nabla(\partial_t z)|^2 \dx\dtau + \frac{1}{2} \into |-\Delta z + \beta_{\lambda}(z)|^2 \dx\\
            &\leq \intto |\nabla(\partial_t z)|^2 \dx\dtau + \intto \beta_{\lambda}'(z)|\partial_t z|^2 \dx\dtau + \frac{1}{2} \into |-\Delta z + \beta_{\lambda}(z)|^2 \dx.
        \end{split}
    \end{equation}
    Next, we aim to bind the terms on the right-hand side. As regard $I_1$ and $I_4$, which depend on the initial data, we only observe that, by well-known properties of the Yosida approximation (see e.g. \cite{brezis1973}),
    \begin{equation*}
        |\beta_{\lambda}(z_0)| \leq |\beta^0(z_0)|,
    \end{equation*}
    where the right-hand side is bounded because of Hypothesis \ref{hyp:uniqueness_initial_conditions}. All the other addends in $I_1$ and $I_4$ do not depend on $\lambda$ and are bounded because of the Hypotheses we imposed on the initial data.
    Concerning $I_2$, we recall that $\pi$ and $\Psi$ are Lipschitz continuous by Hypotheses \ref{hyp:pi} and \ref{hyp:psi}. Then we apply Young inequality. We get
    \begin{equation}\label{eq:estimate_reg_beta_rh1}
        \begin{split}
            I_2 &\leq C \into \left(|z|+ 1+ |\epsilonu| + |\varphi| + |w|\right)|-\Delta z + \beta_{\lambda}(z)| \dx\\
            &\leq \frac{1}{4} \into |-\Delta z + \beta_{\lambda}(z)|^2 \dx + C \into \left( |z|^2 + 1 + |\epsilonu|^2 + |\varphi|^2 + |w|^2 \right)\dx\\
            &\leq \frac{1}{4} \into |-\Delta z + \beta_{\lambda}(z)|^2 \dx + C,
        \end{split}
    \end{equation}
    where the last inequality is due to Hypothesis \ref{hyp:w} and to the fact that $z$, $\epsilonu$, and $\varphi$ are uniformly bounded in $L^{\infty}(0,T;\Hs)$ by \eqref{eq:schauder_phil_estimate}, \eqref{eq:schauder_ul_estimate} and \eqref{eq:schauder_zl_estimate}. The term $I_3$ can be handled similarly, by using in particular \ref{hyp:psi}. We have
    \begin{equation}\label{eq:estimate_reg_beta_rh2}
        \begin{split}
            I_3 &\leq C \intto  \left(|\partial_t z| + |\epsilonut| + |\partial_t\varphi| + |\partial_t w|\right)|-\Delta z + \beta_{\lambda}(z)| \dx\dtau\\
            & \leq C \intto  |-\Delta z + \beta_{\lambda}(z)|^2 \dx\dtau\\
            &\quad + C\intto \left(|\partial_t z|^2 + |\epsilonut|^2 + |\partial_t\varphi|^2 + |\partial_t w|^2\right) \dx \dtau\\
            &\leq C \intto  |-\Delta z + \beta_{\lambda}(z)|^2 \dx\dtau + C,
        \end{split}
    \end{equation}
    recalling Hypothesis \ref{hyp:uniqueness_w} and the fact that $\partial_t z$, $\epsilonut$ and $\partial_t \varphi$ are uniformly bounded in $L^{2}(0,T;\Hs)$ again by \eqref{eq:schauder_phil_estimate}, \eqref{eq:schauder_ul_estimate}, \eqref{eq:schauder_zl_estimate}. Putting together \eqref{eq:estimate_reg_beta_partial}, \eqref{eq:estimate_reg_beta_lh}, \eqref{eq:estimate_reg_beta_rh1}, \eqref{eq:estimate_reg_beta_rh2} yields to
    \begin{multline*}
        \intto |\nabla(\partial_t z)|^2 \dx\dtau + \frac{1}{4}\into |-\Delta z + \beta_{\lambda}(z)|^2 \dx \leq C + C \intto |-\Delta z + \beta_{\lambda}(z)|^2 \dx \dtau.
    \end{multline*}
    Thus, applying Gronwall Lemma, we conclude that the left-hand side is uniformly bounded. Moreover, since $\beta_{\lambda}'$ is non-negative, it holds 
    \begin{equation*}
        \begin{split}
            C &\geq \into |-\Delta z + \beta_{\lambda}(z)|^2 \dx =  \into |-\Delta z|^2 + |\beta_{\lambda}(z)|^2 -2\Delta z \beta_{\lambda}(z) \dx \\
            &= \into |-\Delta z|^2 + |\beta_{\lambda}(z)|^2 +2\beta_{\lambda}'(z) |\nabla z|^2 \dx \geq \into |-\Delta z|^2 + |\beta_{\lambda}(z)|^2 \dx. 
        \end{split}
    \end{equation*}
    and, consequentially, estimates \eqref{eq:schauder_betal_reg_estimate} and \eqref{eq:schauder_zl_reg_estimate} are satisfied.\\

    \noindent \textit{Estimate for }$\uu$. The additional regularity for $\uu$ can be proved at the time-discrete level, noticing that, starting from $\uu_0 \in \Wsz$, the right-hand side in equation \eqref{eq:discrete_problem_u_rewritten} can be represented as a linear functional on $\Hs$. As a consequence, applying the regularity result \cite[][Theorem 1.11, p. 322]{marsden1994}, the time-discrete solution is in $\Ws$. To prove that the limit solution preserves this regularity, one should prove an estimate uniform with respect to the time-step $\tau$. Anyway, we are going to omit these calculations since they are similar to the ones we are going to perform in the continuous setting, where we search for an estimate uniform in $\lambda$.\\
    Since $\uu \in W^{1,\infty}(0,T;\Wsz)$, equation \eqref{eq:displacement_lambda} can be rewritten as
    \begin{multline*}
        -\A(\varphi,z) \diver[\epsilonut] = \B(\varphi,z)\diver[\epsilonu] 
        + \epsilonut \left[ \A_{,\varphi}(\varphi,z)\nabla\varphi + \A_{,z}(\varphi,z)\nabla z\right] \\
        + \epsilonu \left[ \B_{,\varphi}(\varphi,z)\nabla\varphi + \B_{,z}(\varphi,z)\nabla z\right] + \ff
    \end{multline*}
    which is satisfied in $\Hs$ for a.e. $t \in (0,T)$. Multiplying it by $-\diver[\epsilonut]$ and integrating over $\Omega$, one gets
    \begin{equation}\label{eq:estimate_reg_u_partial}
        \begin{split}
            \into &\A(\varphi,z) \diver[\epsilonut] \cdot \diver[\epsilonut]\dx =  -\into \B(\varphi,z)\diver[\epsilonu] \cdot \diver[\epsilonut]\dx\\
            &-  \into \epsilonut \left[ \A_{,\varphi}(\varphi,z)\nabla\varphi - \A_{,z}(\varphi,z)\nabla z\right] \cdot \diver[\epsilonut]\dx \\
            &-  \into\epsilonu \left[ \B_{,\varphi}(\varphi,z)\nabla\varphi - \B_{,z}(\varphi,z)\nabla z\right] \cdot \diver[\epsilonut]\dx \\
            &- \into \ff\cdot \diver[\epsilonut]\dx
            \eqcolon I_1 + I_2 + I_3 +I_4.
        \end{split}
    \end{equation}
    Recalling that, by Hypothesis \ref{hyp:eq_displacement}, $\A$ is strictly positive definite, 
    \begin{equation}\label{eq:estimate_reg_u_coercivity}
         C_{{\A}_*}\norm{\diver[\epsilonut]}_{\Hs}^2 \leq \into \A(\varphi,z) \diver[\epsilonut] \cdot \diver[\epsilonut]\dx.
    \end{equation}
    Now we are going to estimate the terms on the right-hand side of equation \eqref{eq:estimate_reg_u_partial}. Starting from $I_1$, we employ the boundedness of $\B$ from Hypothesis \ref{hyp:eq_displacement}, Young inequality with a small parameter $\eta$ yet to be determined, and the following equality
    \begin{equation*}
        \diver[\epsilonu](\tau) = \diver[\varepsilon(\uu_0)] + \inttau \diver[\epsilonut] \dt 
    \end{equation*}
    and we obtain
    \begin{equation} \label{eq:estimate_reg_u_I1}
        \begin{split}
            I_1 &\leq \eta \norm{\diver[\epsilonut]}_{\Hs}^2 + C_{\eta} \norm{\diver[\epsilonu]}_{\Hs}^2\\
            & \leq \eta \norm{\diver[\epsilonut]}_{\Hs}^2 + C_{\eta} \left(\norm{\diver[\varepsilon(\uu_0)]}_{\Hs}^2 + \inttau \norm{\diver[\epsilonut]}_{\Hs}^2\dt\right),
        \end{split}
    \end{equation}
    where the constant $C_{\eta}$ has changed from the first in the passage from the first to the second line.\\
    Regarding $I_2$, by Lipschitz continuity of $\A$ and $\B$, H\"older inequality and Young inequality with a small $\eta$, we get
    \begin{equation}\label{eq:estimate_reg_u_I2_partial}
        \begin{split}
            I_2 \leq \eta \norm{\diver[\epsilonut]}_{\Hs}^2  + C_{\eta} \norm{\epsilonut}_{L^4}^2 \left( \norm{\nabla \varphi}_{L^4}^2+\norm{\nabla z}_{L^4}^2\right).
        \end{split}
    \end{equation}
    Since we have already shown that $\norm{\varphi}_{\Ws}, \norm{z}_{\Ws} \leq C$ from \eqref{eq:schauder_phil_reg_estimate}, \eqref{eq:schauder_zl_reg_estimate}, by the continuous embedding $\Ws \hookrightarrow W^{1,4}$ it follows 
    \begin{equation}\label{eq:estimate_reg_u_I2_phiz}
        \norm{\nabla \varphi}_{L^4}^2+\norm{\nabla z}_{L^4}^2 \leq C.
    \end{equation}
    Employing the Ehrling's Lemma with $\Ws\hookrightarrow\hookrightarrow W^{1,4} \hookrightarrow \Hs$ and a small constant $\theta$ yet to be determined, it holds 
    \begin{equation*}
        \begin{split}
            \norm{\epsilonut}_{L^4}^2 &\leq C  \norm{\uut}_{W^{1,4}}^2 \leq  \theta \norm{\uut}_{\Ws}^2 + C_{\theta}\norm{\uut}_{\Hs}^2\\
            &\leq \theta \norm{\diver[\epsilonut]}_{\Hs}^2 + C_{\theta}. 
        \end{split}
    \end{equation*}
    The last inequality follows from \Cref{lemma:H2_inequality} and the previous estimate \eqref{eq:schauder_ul_estimate}, renaming the constant involved. Putting these elements together, starting from \eqref{eq:estimate_reg_u_I2_partial} we have proved that
    \begin{equation}\label{eq:estimate_reg_u_I2}
        I_2 \leq (\eta + \theta)\norm{\diver[\epsilonut]}_{\Hs}^2 + C_{\eta,\theta}.
    \end{equation}
    The term $I_3$ can be treated similarly. Proceeding as in equation \eqref{eq:estimate_reg_u_I2_partial} and taking into account equation \eqref{eq:estimate_reg_u_I2_phiz}, yields to
    \begin{equation*}
        I_3 \leq \eta \norm{\diver[\epsilonut]}_{\Hs}^2  + C_{\eta} \norm{\epsilonu}_{L^4}^2 \leq \eta \norm{\diver[\epsilonut]}_{\Hs}^2  + C_{\eta} \norm{\uu}_{\Ws}^2.
    \end{equation*}
    Since 
    \begin{equation*}
        \uu(\tau) = \uu_0 + \inttau \uut \dt,
    \end{equation*}
    we have
    \begin{equation}\label{eq:estimate_reg_u_I3}
         \begin{split}
             I_3 &\leq \eta \norm{\diver[\epsilonut]}_{\Hs}^2  + C_{\eta}\left( \norm{\uu_0}_{\Ws}^2 + \inttau \norm{\uut}_{\Ws}^2 \dt \right)\\
             & \leq \eta \norm{\diver[\epsilonut]}_{\Hs}^2  + C_{\eta}\left( 1 + \inttau \norm{\diver[\epsilonut]}_{\Hs}^2  \dt \right),
         \end{split}
    \end{equation}
    where the last inequality follows from \Cref{lemma:H2_inequality}, changing the constant $C_{\eta}$. Finally, by Young and H\"older inequalities,
    \begin{equation}\label{eq:estimate_reg_u_I4}
        I_4 \leq \norm{\ff}_{\Hs} \norm{\diver[\epsilonut]}_{\Hs} \leq C_{\eta} \norm{\ff}_{\Hs}^2 + \eta \norm{\diver[\epsilonut]}_{\Hs}^2.
    \end{equation}
    Now we put together \eqref{eq:estimate_reg_u_partial} with \eqref{eq:estimate_reg_u_coercivity}, \eqref{eq:estimate_reg_u_I1}, \eqref{eq:estimate_reg_u_I2}, \eqref{eq:estimate_reg_u_I3}, \eqref{eq:estimate_reg_u_I4} and we move to the left-hand side the terms with $\eta$ and $\theta$, fixing them small enough. We obtain:
    \begin{equation}
        \begin{split}
             \norm{\diver[\epsilonut]}_{\Hs}^2(\tau) \leq C \left( 1 + \norm{\ff}_{L^{\infty}(\Hs)}^2 + \inttau \norm{\diver[\epsilonut]}_{\Hs}^2 \dt \right).
        \end{split}
    \end{equation}
    Applying Gronwall's inequality, we prove that $\norm{\uut}_{L^{\infty}(\Ws)}$ is uniformly bounded and, as a consequence, that \eqref{eq:schauder_ul_reg_estimate} holds. This concludes the proof of \Cref{thm:regularity}.
\end{proof}

\subsection{Proof of \Cref{thm:continous_dependence}}
\label{sec:th3}

    \noindent  Consider two pairs $\{(\varphi_i,\sigma_i,\uu_i,z_i)\}_{i=1,2}$ of weak solution corresponding to the assigned functions $\{(\ff_i, w_i, \sigma_{\Gamma,i})\}_{i=1,2}$ and to the initial data $\{(\varphi_{0,i},\sigma_{0,i},\uu_{0,i},z_{0,i})\}_{i=1,2}$. For the sake of brevity, in the following, we will use the shorter notation
    \begin{gather*}
        \varphi \coloneqq \varphi_{1} - \varphi_2, \qquad  \sigma \coloneqq \sigma_{1} - \sigma_2,  \qquad 
        \uu\coloneqq \uu_{1} - \uu_2, \qquad
        z \coloneqq z_{1} - z_2,\\
        \varphi_0 \coloneqq \varphi_{0,1} - \varphi_{0,2}, \qquad  \sigma_0 \coloneqq \sigma_{0,1} - \sigma_{0,2},  \qquad 
        \uu_0\coloneqq \uu_{0,1} - \uu_{0,2}, \qquad
        z_0 \coloneqq z_{0,1} - z_{0,2},\\
        \ff \coloneqq \ff_1 -\ff_2, \qquad w \coloneqq w_1 - w_2, \qquad \sigma_{\Gamma} \coloneqq \sigma_{\Gamma,1} - \sigma_{\Gamma,2},
    \end{gather*}
    and we will denote $M \coloneqq \max_{i=1,2} M_i$.
    Taking the difference of \eqref{eq:phi_with_spaces} written for $\varphi_1$ and $\varphi_2$, testing it by $\varphi$, we obtain
    \begin{equation}\label{eq:test_uniqueness_phi}
        \begin{split}
        \frac{1}{2}&\frac{\text{d}}{\dt}\into |\varphi|^2 \dx + \into |\nabla \varphi|^2 \dx=
        - \into \left(g(\sigma_1,z_1)-g(\sigma_2,z_2)\right)\varphi_1\varphi \dx\\
        &- \into g(\sigma_2,z_2) \varphi^2 \dx + \into \left(p(\sigma_1,z_1)-p(\sigma_2,z_2)\right)\varphi_1\Big(1-\frac{\varphi_1}{N}\Big)\varphi \dx\\
        &+\into p(\sigma_2,z_2)\bigg[\varphi_1\Big(1-\frac{\varphi_1}{N}\Big)-\varphi_2\Big(1-\frac{\varphi_2}{N}\Big)\bigg]\varphi \dx\\
        \leq& C \bigg[\into (|\sigma|+|z|)\varphi \dx + \into |\varphi|^2 \dx \bigg] \leq C \bigg[\into \left(|\sigma|^2+|z|^2 + |\varphi|^2 \right) \dx \bigg]
        \end{split}
    \end{equation}
    where the inequality follows from Young inequality and the constant $C$ depends on $N$, $g^*$, $p^*$, and the  Lipschitz constants of $p$ and $g$. 
    Taking the difference of \eqref{eq:sigma_with_spaces} written for $\sigma_1$ and $\sigma_2$, testing it by $\sigma$, we obtain
    \begin{align*}
        \frac{1}{2}&\frac{\text{d}}{\dt}\into |\sigma|^2 \dx + \into |\nabla \sigma|^2 \dx + \intg |\sigma|^2 \dS + \into \frac{k_1(\varphi_1,z_1)}{k_2(\varphi_1,z_1)+\sigma_1}\sigma^2 \dx \\
        =&\intg \sigma_{\Gamma} \sigma \dS -\into \frac{k_1(\varphi_1,z_1)-k_1(\varphi_2,z_2)}{k_2(\varphi_1,z_1)+\sigma_1}\sigma_2\sigma \dx\\
        &+\into \bigg(\frac{k_1(\varphi_2,z_2)}{k_2(\varphi_1,z_1)+\sigma_1}-\frac{k_1(\varphi_2,z_2)}{k_2(\varphi_2,z_2)+\sigma_2}\bigg) \sigma_2\sigma \dx 
        + \into (J(\varphi_1,z_1)-J(\varphi_2,z_2)) \sigma \dx\\
        \leq& \, \frac{1}{2}\intg |\sigma_{\Gamma}|^2 \dS +\frac{1}{2} \intg |\sigma|^2 \dS + C \into (|\varphi|+|z| + |\sigma|)|\sigma| \dx
    \end{align*}
   where we have used Hypotheses \ref{hyp:eq_lactate} and \ref{hyp:uniqueness_eq_lactate} for $k_1$, $k_2$ and $J$, the fact that $0 \leq \sigma_i \leq M$ and Young inequality. Here the constant $C$ depends on $M$, $k_1^*$, ${k_2}_*$, $J^*$ and the Lipschitz constant of $k_1$, $k_2$, $J$. It follows that
   \begin{equation}\label{eq:test_uniqueness_sigma}
       \begin{split}
           \frac{1}{2}\frac{\text{d}}{\dt}\into |\sigma|^2 \dx + \into |\nabla \sigma|^2 \dx &+ \frac{1}{2}\intg |\sigma|^2 \dS\\
            &\leq \frac{1}{2}\intg |\sigma_{\Gamma}|^2 \dS  + C \into \left(|\varphi|^2+|z|^2 + |\sigma|^2 \right) \dx.
       \end{split}
   \end{equation}
   We take the difference of \eqref{eq:u_with_spaces} written for $\uu_1$ and $\uu_2$ and test it by $\partial_t\uu$. Integrating over $\Omega$, summing and subtracting some terms and using $\A$ positive definiteness, we get
   \begin{equation}\label{eq:test_uniqueness_u_incomplete}
       \begin{split}
           &{C_{\A}}_*\!\into |\epsilonut|^2 \dx\leq \into \A(\varphi_1,z_1)\varepsilon(\uut):\epsilonut \dx \\
           &= -\!\into \!(\A(\varphi_1,z_1) - \A(\varphi_2,z_2)) \varepsilon(\uut_2):\epsilonut \dx
           -  \!\into \!\B(\varphi_1,z_1)\varepsilon(\uu):\epsilonut \dx\\
           &\quad- \into (\B(\varphi_1,z_1) - \B(\varphi_2,z_2)) \varepsilon(\uu_2):\epsilonut \dx
           + \into \ff \cdot \uut \dx\\
           &\eqcolon \, I_1 + I_2 + I_3 + I_4.
       \end{split}
   \end{equation}
   Our next aim is to provide a bound for each integral on the right-hand side. Regarding $I_1$, we employ Lipschitz continuity of $\A$ and H\"older inequality. We obtain
   \begin{equation*}
       \begin{split}
           I_1 &
           \leq C \into (|\varphi|+|z|)|\varepsilon(\uut_2)||\epsilonut| \dx 
           \leq C (\norm{\varphi}_{L^3} + \norm{z}_{L^3})\norm{\varepsilon(\uut_2)}_{L^6}\norm{\epsilonut}_{\Hs}\\
           &\leq \eta \norm{\epsilonut}_{\Hs}^2 + C_{\eta} (\norm{\varphi}_{L^3}^2 + \norm{z}_{L^3}^2),
       \end{split}
   \end{equation*}
   where the last inequality holds because $\varepsilon(\uut_2)$ is uniformly bounded in $L^{\infty}(\Vs) \hookrightarrow L^{\infty}(L^6)$ and we have applied Young inequality with a small constant $\eta$. Notice that $C_{\eta}$ depends on $\max_{i=1,2}\left(\norm{\varepsilon(\partial_t\uu_i)}_{L^{\infty}(\Vs)}\right)$. Employing \Cref{lemma:special_case_gagliardo_nirenberg} and then again Young inequality yields to
   \begin{equation}\label{eq:ineq_uniqueness_I1}
       \begin{split}
           I_1  &\leq \eta \norm{\epsilonut}_{\Hs}^2 + C_{\eta} (\norm{\varphi}_{\Hs}^{1/2}\norm{\varphi}_{\Vs}^{1/2} + \norm{z}_{\Hs}^{1/2}\norm{z}_{\Vs}^{1/2})^2\\
           &\leq  \eta (\norm{\epsilonut}_{\Hs}^2 + \norm{\varphi}_{\Vs}^2 + \norm{z}_{\Vs}^2)+  C_{\eta}(\norm{\varphi}_{\Hs}^2 + \norm{z}_{\Hs}^2)\\
           &\leq \eta \bigg( \into |\epsilonut|^2 \dx + \into |\nabla \varphi|^2 \dx + \into |\nabla z|^2 \dx  \bigg)\\
           &\quad+ C_{\eta} \bigg(  \into |\varphi|^2 \dx + \into | z|^2 \dx  \bigg).
       \end{split}
   \end{equation}
   Concerning $I_2$, using H\"older and Young inequalities, we get that
   \begin{equation}\label{eq:ineq_uniqueness_I2}
       \begin{split}
           I_2 &\leq C \into |\epsilonu| |\epsilonut| \dx \leq \eta \into |\epsilonut|^2 \dx + C_{\eta} \into |\epsilonu|^2 \dx\\  
           &\leq \eta \into |\epsilonut|^2 \dx + C_{\eta} \bigg[ \into |\varepsilon(\uu_0)|^2 \dx + \intto |\epsilonut|^2 \dx \dtau\bigg]. 
       \end{split}
   \end{equation}
   The integral $I_3$ can be treated exactly as $I_1$ and satisfies the same estimate
   \begin{equation}\label{eq:ineq_uniqueness_I3}
       \begin{split}
           I_3  \leq  \eta \bigg( \into |\epsilonut|^2 \dx + \into |\nabla \varphi|^2 \dx + \into |\nabla z|^2 \dx  \bigg)\\
           + C_{\eta} \bigg(  \into |\varphi|^2 \dx + \into | z|^2 \dx  \bigg),
       \end{split}
   \end{equation}
   where $C_{\eta}$ depends on $\max_{i=1,2}\left(\norm{\varepsilon(\uu_i)}_{L^{\infty}(\Vs)}\right)$.
    Finally, $I_4$ can be handled by employing H\"older and Young inequalities again, in order to obtain
   \begin{equation}\label{eq:ineq_uniqueness_I4}
       \begin{split}
           I_4 & \leq \eta \into |\epsilonut|^2 \dx +  C_{\eta} \into |\ff|^2 \dx. 
       \end{split}
   \end{equation}
   Taking advantage of the inequalities \eqref{eq:ineq_uniqueness_I1}, \eqref{eq:ineq_uniqueness_I2}, \eqref{eq:ineq_uniqueness_I3}, \eqref{eq:ineq_uniqueness_I4} in \eqref{eq:test_uniqueness_u_incomplete} and redefining $\eta$, we obtain:
   \begin{align}\label{eq:test_uniqueness_u}
       \begin{split}
           {C_{\A}}_*&\into |\epsilonut|^2 \dx\\
           &\leq \eta \bigg( \into |\epsilonut|^2 \dx + \into |\nabla \varphi|^2 \dx + \into |\nabla z|^2 \dx  \bigg)
            + C_{\eta} \bigg(  \into |\varphi|^2 \dx\\
            & \quad + \into | z|^2 \dx + \into |\varepsilon(\uu_0)|^2 \dx + \intto |\epsilonut|^2 \dx \dt + \into |\ff|^2 \dx\bigg).
       \end{split}
   \end{align}
   Finally, we take the difference between \eqref{eq:z_with_spaces} written for $z_1$ and $z_2$ and test it by $z$. Integrating over $\Omega$, exploiting  monotonicity of $\beta$, Lipschitz continuity of $\pi$ and $\Psi$ and Young inequality, we deduce that
   \begin{equation}\label{eq:test_uniqueness_z}
       \begin{split}
           \frac{1}{2}&\frac{\text{d}}{\dt} \!\into |z|^2 \dx + \!\into |\nabla z|^2 \dx \leq \frac{1}{2}\frac{\text{d}}{\dt} \! \into |z|^2 \dx + \! \into |\nabla z|^2 \dx + \!\into (\xi_1-\xi_2)z \dx \\
           &= -\into (\pi(z_1)-\pi(z_2))z \dx + \into w \, z \dx\\
           &\quad - \into \left[\Psi(\varphi_1,\varepsilon(\uu_1))-\Psi(\varphi_2,\varepsilon(\uu_2))\right]z \dx\\
           &\leq C\bigg(\into |z|^2 \dx + \into |w|^2 \dx + \into |\varphi|^2 \dx + \into |\epsilonu|^2 \dx \bigg),
       \end{split}
   \end{equation}
   where $C$ depends only on the Lipschitz constants of $\pi$ and $\Psi$. At this point, summing the inequalities \eqref{eq:test_uniqueness_phi}, \eqref{eq:test_uniqueness_sigma}, \eqref{eq:test_uniqueness_u}, \eqref{eq:test_uniqueness_z} and moving the terms multiplied by $\eta$ to the left-hand side, we get
   \begin{equation}
       \begin{split}
            &\frac{\text{d}}{\dt}\bigg(\into |\varphi|^2 \dx + \into |\sigma|^2 \dx + \into |z|^2 \dx \bigg)\\
           &\quad+ \into |\nabla \varphi|^2 \dx + \into |\nabla \sigma|^2 \dx + \into |\nabla z|^2 \dx +  \into |\epsilonut|^2 \dx    + \intg |\sigma|^2 \dS\\
           &\leq C \bigg[\into |\sigma|^2 \dx + \into |z|^2 \dx  + \into |\varphi|^2 \dx +  \intto |\epsilonut|^2 \dx \dtau \bigg] \\
           &\quad + C \bigg[\into |\varepsilon(\uu_0)|^2 \dx + \intg |\sigmag|^2 \dS +  \into |w|^2 \dx + \into |\ff|^2 \dx \bigg].
       \end{split}
    \end{equation}
   Integrating in time and then applying Gronwall inequality, the following estimate follows
   \begin{multline}
       \norm{\varphi}_{L^{\infty}(\Hs)\cap L^2(\Vs)} + \norm{\sigma}_{L^{\infty}(\Hs)\cap L^2(\Vs)} + \norm{z}_{L^{\infty}(\Hs)\cap L^2(\Vs)} + \norm{\uu}_{H^1(\Vs_0)} \\
       \leq C \Big( \norm{\varphi_0}_{\Hs} 
        + \norm{\sigma_0}_{\Hs} + \norm{z_0}_{\Hs} + \norm{\uu_0}_{\Vs_0}\\
         +  \norm{\ff}_{L^2(\Hs)} + \norm{w}_{L^2(\Hs)} +  \norm{\sigmag}_{L^2(L^2_{\Gamma})}  \Big)
   \end{multline}
   for a constant $C$ that does not depend on the differences $\varphi$, $\sigma$, $\uu$ and $z$ but depends on the fixed data of the problem, $T$ and  $\max_{i=1,2}(\norm{\varepsilon(\partial_t\uu_i)}_{L^{\infty}(\Vs)})$. This concludes the proof of \Cref{thm:continous_dependence}.

\section*{Acknowledgments}
G.C. wishes to acknowledge the financial support of the International Research Laboratory LYSM, IRL 2019 CNRS/INdAM. P.C. and E.R. gratefully mention some support from the Next Generation EU Project 
No.P2022Z7ZAJ (A unitary mathematical framework for modelling 
muscular dystrophies), and the MIUR-PRIN Grant
2020F3NCPX “Mathematics for industry 4.0
(Math4I4)". 
G.C., P.C. and E.R. are members of GNAMPA (Gruppo Nazionale per l’Analisi Matematica, la Probabilità e le loro Applicazioni) of INdAM (Istituto Nazionale di Alta Matematica).

\printbibliography

@book{brezis1973,
	author = {Br\'{e}zis, Ha\"im},
	mrclass = {47H05},
	mrnumber = {348562},
	mrreviewer = {Bruce Calvert},
	note = {Notas de Matem\'{a}tica [Mathematical Notes], No. 50},
	pages = {vi+183},
	publisher = {North-Holland},
	series = {North-Holland Mathematics Studies, No. 5},
	title = {Op\'{e}rateurs maximaux monotones et semi-groupes de contractions dans les espaces de {H}ilbert},
	year = {1973}
}

@book{brezis2011,
	author = {Br\'{e}zis, Ha\"im},
	isbn = {978-0-387-70913-0},
	mrclass = {35-01 (46-01 46E35 46N20 47F05)},
	mrnumber = {2759829},
	mrreviewer = {Vicen\c{t}iu D. R\u{a}dulescu},
	pages = {xiv+599},
	publisher = {Springer, New York},
	series = {Universitext},
	title = {Functional analysis, {S}obolev spaces and partial differential equations},
	year = {2011}}

@book{barbu2010,
	author = {Barbu, Viorel},
	doi = {10.1007/978-1-4419-5542-5},
	isbn = {978-1-4419-5541-8},
	mrclass = {34-02 (34G20 34G25 35A16 47J35 47N20)},
	mrnumber = {2582280},
	mrreviewer = {Jean Mawhin},
	pages = {x+272},
	publisher = {Springer, New York},
	series = {Springer Monographs in Mathematics},
	title = {Nonlinear differential equations of monotone types in {B}anach spaces},
	url = {https://doi.org/10.1007/978-1-4419-5542-5},
	year = {2010},
	Bdsk-Url-1 = {https://doi.org/10.1007/978-1-4419-5542-5}}

@book{marsden1994,
	author = {Marsden, Jerrold E. and Hughes, Thomas J. R.},
	isbn = {0-486-67865-2},
	mrclass = {73Cxx (35Q72 58-01 73-01 73Bxx 73V99)},
	mrnumber = {1262126},
	mrreviewer = {Ralph\ Saxton},
	note = {Corrected reprint of the 1983 original},
	pages = {xviii+556},
	publisher = {Dover Publications, Inc., New York},
	title = {Mathematical foundations of elasticity},
	year = {1994}}

@article{Garcke_Lam_Signori_21,
	author = {Garcke, Harald and Lam, Kei Fong and Signori, Andrea},
	doi = {10.1016/j.nonrwa.2020.103192},
	fjournal = {Nonlinear Analysis. Real World Applications. An International Multidisciplinary Journal},
	issn = {1468-1218,1878-5719},
	journal = {Nonlinear Anal. Real World Appl.},
	mrclass = {92C50 (35M33 35Q92)},
	mrnumber = {4126782},
	mrreviewer = {Corina-\c{S}tefania\ Drapaca},
	pages = {Paper No. 103192, 28},
	title = {On a phase field model of {C}ahn-{H}illiard type for tumour growth with mechanical effects},
	url = {https://doi.org/10.1016/j.nonrwa.2020.103192},
	volume = {57},
	year = {2021},
	Bdsk-Url-1 = {https://doi.org/10.1016/j.nonrwa.2020.103192}}

@book{necas2012,
	author = {Ne\v{c}as, Jind\v{r}ich},
	doi = {10.1007/978-3-642-10455-8},
	mrclass = {35-02 (35J08 35J20 35J25 35J50 35J57)},
	mrnumber = {3014461},
	mrreviewer = {Norbert\ Ortner},
	note = {Translated from the 1967 French original by Gerard Tronel and Alois Kufner, Editorial coordination and preface by \v{S}\'{a}rka Ne\v{c}asov\'{a} and a contribution by Christian G. Simader},
	pages = {xvi+372},
	publisher = {Springer, Heidelberg},
	series = {Springer Monographs in Mathematics},
	title = {Direct methods in the theory of elliptic equations},
	url = {https://doi.org/10.1007/978-3-642-10455-8},
	year = {2012},
	Bdsk-Url-1 = {https://doi.org/10.1007/978-3-642-10455-8}}

@book{pata19,
	author = {Pata, Vittorino},
	doi = {10.1007/978-3-030-19670-7},
	mrclass = {47H10 (54H25 65J20)},
	mrnumber = {3967045},
	mrreviewer = {Lai-Jiu\ Lin},
	note = {La Matematica per il 3+2},
	pages = {xvii+171},
	publisher = {Springer, Cham},
	series = {Unitext},
	title = {Fixed point theorems and applications},
	url = {https://doi.org/10.1007/978-3-030-19670-7},
	volume = {116},
	year = {2019},
	Bdsk-Url-1 = {https://doi.org/10.1007/978-3-030-19670-7}}

@book{Lions61,
	author = {Lions, J.-L.},
	mrclass = {35.00 (34.95)},
	mrnumber = {153974},
	mrreviewer = {S.\ Zaidman},
	pages = {ix+292},
	publisher = {Springer-Verlag, Berlin-G\"{o}ttingen-Heidelberg},
	series = {Die Grundlehren der mathematischen Wissenschaften},
	title = {\'{E}quations diff\'{e}rentielles op\'{e}rationnelles et probl\`emes aux limites},
	volume = {Band 111},
	year = {1961}}

@article{Rocca_Rossi_2014,
	author = {Rocca, Elisabetta and Rossi, Riccarda},
	doi = {10.1142/S021820251450002X},
	fjournal = {Mathematical Models and Methods in Applied Sciences},
	issn = {0218-2025,1793-6314},
	journal = {Math. Models Methods Appl. Sci.},
	mrclass = {35M33 (35D30 35K59 35K65 35R11 74A45 80A22)},
	mrnumber = {3192591},
	mrreviewer = {Hao\ Wu},
	number = {7},
	pages = {1265--1341},
	title = {A degenerating {PDE} system for phase transitions and damage},
	url = {https://doi.org/10.1142/S021820251450002X},
	volume = {24},
	year = {2014},
	Bdsk-Url-1 = {https://doi.org/10.1142/S021820251450002X}}

@article{Cherfils_Gatti_Guillevin_Miranville_Guillevin22,
	author = {Cherfils, Laurence and Gatti, Stefania and Guillevin, Carole and Miranville, Alain and Guillevin, R\'emy},
	doi = {10.1093/imammb/dqac010},
	fjournal = {Mathematical Medicine and Biology. A Journal of the IMA},
	issn = {1477-8599,1477-8602},
	journal = {Math. Med. Biol.},
	mrclass = {92C30},
	mrnumber = {4517253},
	number = {4},
	pages = {382--409},
	title = {On a tumor growth model with brain lactate kinetics},
	url = {https://doi.org/10.1093/imammb/dqac010},
	volume = {39},
	year = {2022},
	Bdsk-Url-1 = {https://doi.org/10.1093/imammb/dqac010}}

@article{Garcke_Lam_Signori_Opt_Contr_2021,
	author = {Garcke, Harald and Lam, Kei Fong and Signori, Andrea},
	doi = {10.1137/20M1372093},
	fjournal = {SIAM Journal on Control and Optimization},
	issn = {0363-0129,1095-7138},
	journal = {SIAM J. Control Optim.},
	mrclass = {49J20 (35K57 35Q92 49K20 74B05 92C42)},
	mrnumber = {4243665},
	mrreviewer = {Cheng-Cheng\ Zhu},
	number = {2},
	pages = {1555--1580},
	title = {Sparse optimal control of a phase field tumor model with mechanical effects},
	url = {https://doi.org/10.1137/20M1372093},
	volume = {59},
	year = {2021},
	Bdsk-Url-1 = {https://doi.org/10.1137/20M1372093}}

@article{Aubert_etal_2005,
      author   = {Aubert, Agn{\`e}s and Costalat, Robert and Magistretti, Pierre J. and Pellerin, Luc},
      title    = {Brain lactate kinetics: Modeling evidence for neuronal lactate uptake upon activation},
      journal  = {Proc. Natl. Acad. Sci. U.S.A.},
      volume   =  {102},
      number   =  {45},
      pages    = {16448--16453},
      year     =  {2005},
      doi      = {https://doi.org/10.1073/pnas.0505427102}
}

@article{Guillevin_etal_2011,
      title    = {Mathematical modeling of energy metabolism and hemodynamics of
                  {WHO} grade {II} gliomas using in vivo {MR} data},
      author   = {Guillevin, R{\'e}my and Menuel, Carole and Vall{\'e}e,
                  Jean-No{\"e}l and Fran{\c c}oise, Jean-Pierre and Capelle,
                  Laurent and Habas, Christophe and De Marco, Giovanni and Chiras,
                  Jacques and Costalat, Robert},
      journal  = {C. R. Biol.}, 
      fjournal  = {Comptes rendus biologies}, 
      volume   = {334},
      number   = {1},
      pages    = {31--38},
      year     = {2011},
      doi      = {https://doi.org/10.1016/j.crvi.2010.11.002}
}

@article{Guillevin_etal_2018,
    author = {Guillevin, Carole and Guillevin, Rémy and Miranville, Alain and Perrillat-Mercerot, Angélique},
    title = {Analysis of a mathematical model for brain lactate kinetics},
    journal = {Math. Biosci. Eng.},
    fjournal = {Mathematical Biosciences \& Engineering},
    volume = {15},
    number = {5},
    pages = {1225--1242},
    year = {2018},
    issn = {1551-0018},
    doi = {10.3934/mbe.2018056},
    url = {https://www.aimsciences.org/article/id/34e895f3-030d-45ae-a0a2-e38cd44d1883}
}

@misc{cavalleri_2024,
      title={A phase field model of Cahn-Hilliard type for tumour growth with mechanical effects and damage}, 
      author={Cavalleri, Giulia},
      year={2024},
      eprint={2409.14817},
      archivePrefix={arXiv},
      primaryClass={math.AP},
      url={https://arxiv.org/abs/2409.14817}, 
}

@incollection{Urcun_Lorenzo_etal_22,
    title = {Chapter Six - Oncology and mechanics: Landmark studies and promising clinical applications},
    series = {Adv. Appl. Mech.}, 
    fseries = {Advances in Applied Mechanics}, 
    publisher = {Elsevier},
    volume = {55},
    pages = {513--571},
    year = {2022},
    issn = {0065-2156},
    doi = {https://doi.org/10.1016/bs.aams.2022.05.003},
    url = {https://www.sciencedirect.com/science/article/pii/S0065215622000047},
    author = {Stéphane Urcun and Guillermo Lorenzo and Davide Baroli and Pierre-Yves Rohan and Giuseppe Sciumè and Wafa Skalli and Vincent Lubrano and Stéphane P.A. Bordas}
}

@article{Fremond_Nedjar_95,
       author = {{Fr{\'e}mond}, Michel and {Nedjar}, Boumediene},
        title = "{Damage in concrete: the unilateral phenomenon}",
      journal = {Nucl. Eng. Des.},
      fjournal = {Nuclear Engineering and Design},
         year = 1995,
       volume = {156},
       number = {1},
        pages = {323--335},
          doi = {10.1016/0029-5493(94)00970-A}
}

@article{Fremond_Nedjar_96,
    title = {Damage, gradient of damage and principle of virtual power},
    journal = {Int. J. Solids Structures},
    fjournal = {International Journal of Solids and Structures},
    volume = {33},
    number = {8},
    pages = {1083--1103},
    year = {1996},
    issn = {0020-7683},
    doi = {https://doi.org/10.1016/0020-7683(95)00074-7},
    url = {https://www.sciencedirect.com/science/article/pii/0020768395000747},
    author = {{Fr{\'e}mond}, Michel and {Nedjar}, Boumediene},
}

@book {Fremond_2002,
    AUTHOR = {Fr\'emond, Michel},
     TITLE = {Non-smooth thermomechanics},
 PUBLISHER = {Springer-Verlag, Berlin},
      YEAR = {2002},
     PAGES = {xvi+480},
      ISBN = {3-540-66500-5},
   MRCLASS = {74A15 (74F05 74M15 74Nxx 80A17 80A22)},
  MRNUMBER = {1885252},
MRREVIEWER = {Pierluigi\ Colli},
       DOI = {10.1007/978-3-662-04800-9},
       URL = {https://doi.org/10.1007/978-3-662-04800-9},
}

@article {Kuttler_Shillor_2006,
    AUTHOR = {Kuttler, K. L. and Shillor, M.},
     TITLE = {Quasistatic evolution of damage in an elastic body},
   JOURNAL = {Nonlinear Anal. Real World Appl.},
  FJOURNAL = {Nonlinear Analysis. Real World Applications. An International
              Multidisciplinary Journal},
    VOLUME = {7},
      YEAR = {2006},
    NUMBER = {4},
     PAGES = {674--699},
      ISSN = {1468-1218,1878-5719},
   MRCLASS = {74G25 (34G25 74D05 74G30 74R05)},
  MRNUMBER = {2235227},
MRREVIEWER = {Jos\'e\ R.\ Fern\'andez},
       DOI = {10.1016/j.nonrwa.2005.03.026},
       URL = {https://doi.org/10.1016/j.nonrwa.2005.03.026},
}

@article {Nirenberg_59,
    AUTHOR = {Nirenberg, Louis},
     TITLE = {On elliptic partial differential equations},
   JOURNAL = {Ann. Scuola Norm. Sup. Pisa Cl. Sci. (3)},
  FJOURNAL = {Annali della Scuola Normale Superiore di Pisa. Classe di
              Scienze. Serie III},
    VOLUME = {13},
      YEAR = {1959},
     PAGES = {115--162},
      ISSN = {0391-173X},
   MRCLASS = {35.00},
  MRNUMBER = {109940},
MRREVIEWER = {L.\ Garding},
}

@book{Lions_Magenes_12,
    author={Lions, Jacques-Louis and Magenes, Enrico},
    title={Non-Homogeneous Boundary Value Problems and Applications: Vol. 1},
    series={Grundlehren der mathematischen Wissenschaften},
    publisher={Springer Berlin Heidelberg},
    year={2012},
    PAGES = {xvi+360},
    isbn={9783642651618},
    lccn={71151407},
    DOI = {https://doi.org/10.1007/978-3-642-65161-8}
}

@article{Simon_86,
    author = {Simon, Jacques},
    year = {1986},
    pages = {65-96},
    title = {Compact sets in the space $L^p(0,T; B)$},
    volume = {146},
    journal = {Ann. Mat. Pura Appl.}, 	
    fjournal = {Annali di Matematica Pura ed Applicata}, 
    doi = {10.1007/BF01762360}
}

@book {Troltzsch_2010,
    AUTHOR = {Tr\"{o}ltzsch, Fredi},
     TITLE = {Optimal control of partial differential equations},
    SERIES = {Graduate Studies in Mathematics},
    VOLUME = {112},
      NOTE = {Theory, methods and applications,
              Translated from the 2005 German original by J\"{u}rgen Sprekels},
 PUBLISHER = {American Mathematical Society, Providence, RI},
      YEAR = {2010},
     PAGES = {xvi+399},
      ISBN = {978-0-8218-4904-0},
   MRCLASS = {49-02 (35J61 35K59 49J20 49K20 93C20)},
  MRNUMBER = {2583281},
MRREVIEWER = {Irena Lasiecka},
       DOI = {10.1090/gsm/112},
       URL = {https://doi.org/10.1090/gsm/112},
}

@book {Dautray_Lions_92,
    AUTHOR = {Dautray, Robert and Lions, Jacques-Louis},
     TITLE = {Mathematical analysis and numerical methods for science and
              technology. {V}ol. 5},
      NOTE = {Evolution problems. I,
              With the collaboration of Michel Artola, Michel Cessenat and
              H\'{e}l\`ene Lanchon,
              Translated from the French by Alan Craig},
 PUBLISHER = {Springer-Verlag, Berlin},
      YEAR = {1992},
     PAGES = {xiv+709},
   MRCLASS = {00A05 (35-01 47-01)},
  MRNUMBER = {1156075},
       DOI = {10.1007/978-3-642-58090-1},
       URL = {https://doi.org/10.1007/978-3-642-58090-1},
}

@article{Mielke_Roubicek_Zeman_2010,
  title={Complete damage in elastic and viscoelastic media and its energetics},
  author={Alexander Mielke and Tomáš Roubíček and Jan Zeman},
  journal={Comput. Methods Appl. Mech. Eng.},
  fjournal={Computer Methods in Applied Mechanics and Engineering},
  year={2010},
  volume={199},
  pages={1242-1253}
}

@article {Heinemann_Kraus_2011,
    AUTHOR = {Heinemann, Christian and Kraus, Christiane},
     TITLE = {Existence of weak solutions for {C}ahn-{H}illiard systems coupled with elasticity and damage},
   JOURNAL = {Adv. Math. Sci. Appl.},
  FJOURNAL = {Advances in Mathematical Sciences and Applications},
    VOLUME = {21},
      YEAR = {2011},
    NUMBER = {2},
     PAGES = {321--359},
      ISSN = {1343-4373},
   MRCLASS = {35Q74 (35D30 49J40 74A45 74C10 80A22)},
  MRNUMBER = {2953122},
}

@article {Heinemann_Rocca_2015,
    AUTHOR = {Heinemann, Christian and Rocca, Elisabetta},
     TITLE = {Damage processes in thermoviscoelastic materials with damage-dependent thermal expansion coefficients},
   JOURNAL = {Math. Methods Appl. Sci.},
  FJOURNAL = {Mathematical Methods in the Applied Sciences},
    VOLUME = {38},
      YEAR = {2015},
    NUMBER = {18},
     PAGES = {4587--4612},
      ISSN = {0170-4214,1099-1476},
   MRCLASS = {74D10 (35Q74 74F05)},
  MRNUMBER = {3449619},
MRREVIEWER = {Vittorio\ Zampoli},
       DOI = {10.1002/mma.3393},
       URL = {https://doi.org/10.1002/mma.3393},
}

@article{Thomas_Mielke_2010,
    author={Thomas, Marita and Mielke, Alexander},
  title={Damage of nonlinearly elastic materials at small strain--Existence and regularity results--},
  journal={ZAMM-J. Appl. Math. Mech.}, 
  fjournal={ZAMM-Journal of Applied Mathematics and Mechanics/Zeitschrift f{\"u}r Angewandte Mathematik und Mechanik: Applied Mathematics and Mechanics}, 
  volume={90},
  number={2},
  pages={88--112},
  year={2010},
  publisher={Wiley Online Library}
}

@article{fisher_1937,
  title={The wave of advance of advantageous genes},
  author={Fisher, Ronald Aylmer},
  journal={Ann. Eugen.}, 
  fjournal={Annals of eugenics},
  volume={7},
  number={4},
  pages={355--369},
  year={1937},
  publisher={Wiley Online Library}
}

@article{Kolmogorov_1937,
  author       = {Kolmogorov, A. N. and Petrovskii, I. G. and Piskunov, N. S.},
  title        = {A study of the diffusion equation with increase in the amount of substance, and its application to a biological problem},
  journal      = {Bull. Moscow Univ. Math. Mech.},
  volume       = {1},
  number       = {6},
  pages        = {1--26},
  year         = {1937},
  note         = {Reprinted in: V. M. Tikhomirov (ed.) Selected Works of A. N. Kolmogorov, vol. 1, pp. 242--270, Kluwer, Dordrecht (1991). Also in: I. G. Petrowsky Selected Works, Part II, pp. 106--132, Gordon and Breach, Amsterdam (1996).}
}

@ARTICLE{Alfonso_etal_2017,
  title     = "The biology and mathematical modelling of glioma invasion: a
               review",
  author    = "Alfonso, J. C. L. and Talkenberger, K. and Seifert, M. and Klink, B.
               and Hawkins-Daarud, A. and Swanson, K. R. and Hatzikirou, H. and
               Deutsch, A.",
  journal   = "J. R. Soc. Interface",
  publisher = "The Royal Society",
  volume    =  14,
  number    =  136,
  pages     = "{ paper No. }20170490",
  year      =  2017,
}

\end{document}